\documentclass[10pt]{amsart}

\usepackage{amscd,amsfonts,amssymb}
\usepackage{tikz}
\usepackage{pgfplots}
\pgfplotsset{compat=1.18}
\usetikzlibrary{automata,positioning,calc,trees}
\usetikzlibrary{positioning,arrows.meta}
\usetikzlibrary{intersections,pgfplots.fillbetween}
\usepackage{graphicx,tikz}
\usepackage{graphicx} 
\usepackage{pgf,tikz,pgfplots}
\usepackage{mathrsfs}
\usetikzlibrary{arrows}
\usepackage{enumerate}
\usepackage{subfig}
\usepackage[shortlabels]{enumitem}
\usepackage{mathrsfs}
\usepackage{amssymb,amsmath,amsthm,color}
\usepackage{caption,subcaption}
\usepackage[alphabetic,bibtex-style]{amsrefs}
\usepackage{hyperref}
\usepackage{url}
\usepackage{setspace}
\usepackage{float}
\tikzstyle{startstop} = [rectangle, rounded corners, minimum width=3cm, minimum height=1cm,text centered, draw=black]
\tikzstyle{process} = [rectangle, rounded corners, minimum width=3cm, minimum height=1cm, text centered, draw=black]
\tikzstyle{arrow} = [thick,->,>=stealth]
\BibSpec{article}{%
	+{}  {\PrintAuthors}                {author}
	+{,} { \textit}                     {title}
	+{.} { }                            {part}
	+{:} { \textit}                     {subtitle}
	+{,} { \PrintContributions}         {contribution}
	+{.} { \PrintPartials}              {partial}
	+{,} { }                            {journal}
	+{}  { \textbf}                     {volume}
	+{}  { \PrintDatePV}                {date}
	+{,} { \issuetext}                  {number}
	+{,} { \eprintpages}                {pages}
	+{,} { }                            {status}
	+{,} { \url}                        {url}    
	+{,} { \PrintDOI}                   {doi}
	+{,} { available at \eprint}        {eprint}
	+{}  { \parenthesize}               {language}
	+{}  { \PrintTranslation}           {translation}
	+{;} { \PrintReprint}               {reprint}
	+{.} { }                            {note}
	+{.} {}                             {transition}
}

\date{\today}
\allowdisplaybreaks
\newcommand{\R}{{\mathbb R}}       
\newcommand{\N}{{\mathbb N}}

\newcommand{\Z}{{\mathbb Z}}       

\newtheorem{theorem}{Theorem}[section]
\newtheorem{lemma}[theorem]{Lemma}
\newtheorem{remark}[theorem]{Remark}

\newtheorem{coro}[theorem]{Corollary}
\newtheorem{proposition}[theorem]{Proposition}

\newtheorem*{theorem*}{Theorem}

\theoremstyle{definition}

\usepackage{color}

\numberwithin{equation}{section}

\pgfplotsset{compat=1.18}
\begin{document}

	\title{The bilinear cone multiplier on $\R^2\times \R^2$}

	\begin{abstract}
    In this paper, we study the bilinear cone multiplier operator in two dimensions. We establish $L^{p_1}\times L^{p_2}\to L^{p}$ boundedness for a regularized version of this operator over a broad range of exponents satisfying the H\"older scaling condition. Our approach is based on a decomposition of the bilinear operator into square functions associated with linear cone multipliers and their variants. We derive pointwise bounds for these square functions via suitable strong maximal function estimates, and obtain sharp $L^4$ bounds using geometric methods originating in the work of C\'ordoba and Carbery. The combination of these estimates yields the $L^p$ boundedness for the bilinear cone multiplier.
	\end{abstract}

\author[L. Roncal]{Luz Roncal}
\address[Luz Roncal]{
	BCAM -- Basque Center for Applied Mathematics,
	48009 Bilbao, Spain,
	\newline \phantom{\quad} \&
	Ikerbasque, Basque Foundation for Science,
	48011 Bilbao, Spain,
	\newline \phantom{\quad} \&
	Universidad del Pa\'is Vasco / Euskal Herriko Unibertsitatea,
	48080 Bilbao, Spain}
\email{\href{mailto:lroncal@bcamath.org}{lroncal@bcamath.org}}

\author[S. Shrivastava]{Saurabh Shrivastava} 	
\address[Saurabh Shrivastava]{Indian Institute of Science Education and Research Bhopal, 462066, India}
\email{saurabhk@iiserb.ac.in}

\author[K. Shuin]{Kalachand Shuin}
\address[Kalachand Shuin]{Department of Mathematics, Indian Institute of Science, Bangalore-560012, India}
\email{shuin.k.c@gmail.com, kalachands@iisc.ac.in}
\author[L. Zheng]{Linfei Zheng}
\address[Linfei Zheng]{
    Center for Applied Mathematics, Tianjin University, Weijin Road 92, 300072 Tianjin, China		
    \newline \phantom{\quad} \&
    BCAM -- Basque Center for Applied Mathematics, 48009 Bilbao, Spain}
\email{\href{mailto:linfei_zheng@tju.edu.cn}{linfei\_zheng@tju.edu.cn}}

 \keywords{Bilinear multipliers, cone multiplier, square functions, directional maximal operators}
\subjclass[2020]{Primary: 42B15 Secondary: 42B25}
	\maketitle
	
\section{Introduction}

The study of the linear cone multiplier  operator goes back to Stein \cite{SteinSomeproblemsinharmonicanalysis}. Given the multiplier
$$
\xi=(\xi',\xi_n) \mapsto  \Big(1 - \frac{|\xi'|^2}{\xi_n^2}\Big)_+^\lambda, \quad (\xi',\xi_n)\in \R^{n-1}\times \R, \quad n\ge 3,
$$
the associated operator $C^{\lambda}$ exhibits a singularity along the light cone, and its $L^p$ boundedness properties remain largely open.  
In dimension $n=2$, the singularity of this operator appears along a triangle, thus it is not that interesting due to the lack of curvature. 

It was conjectured in \cite[p.5]{SteinSomeproblemsinharmonicanalysis}
that, for $\lambda>0$ and $1\leq p\leq \infty$, the cone multiplier operator is bounded in $L^p(\R^n)$ if, and only if, 
$$
\lambda>\lambda(p)=\max \Big\{(n-1)\Big|\frac{1}{p}-\frac{1}{2}\Big|-\frac{1}{2},0\Big\}.
$$
Observe that for fixed $\lambda>0$, the conjectured range of $L^p$ boundedness of  $C^{\lambda}$ in $n$ dimensions is the same as that of the conjectured range of Bochner--Riesz means of order $\lambda$ in $(n-1)$ dimensions $B^{\lambda}$, defined by 
$$
\xi'\mapsto (1-|\xi'|^2)^{\lambda}_{+},  \quad \xi'\in \mathbb{R}^{n-1}, \quad n\geq 3.
$$
Indeed, the transference principle for Fourier multipliers asserts that the cone multiplier conjecture is stronger than the Bochner--Riesz conjecture.  

A major step toward a tractable formulation of the cone multiplier problem was introduced by Mockenhaupt \cite{Mockenhaupt}, who inserted a smooth cutoff in the transversal variable. More precisely, one considers multipliers of the form
\begin{equation}
\label{eq:regularized}
(\xi',\xi_n) \mapsto \Big(1 - \frac{|\xi'|^2}{\xi_n^2}\Big)_+^\lambda \varphi(\xi_n),
\end{equation}
where $\varphi \in C_c^\infty([1/2,2])$. 
This modification preserves the essential geometry of the cone while eliminating certain degeneracies, and provides a useful model for the analysis of cone-type singularities and associated square functions. In the linear setting, such operators are closely connected to conical square functions, Kakeya-type phenomena, and directional maximal operators, and their study has led to major developments in harmonic analysis.

We briefly recall the historical context. Mockenhaupt \cite{Mockenhaupt} reduced the study of the regularized cone multiplier \eqref{eq:regularized} to suitable $L^p$ estimates for an associated square function and established an $L^4$ bound for $\lambda>\frac{1}{8}$. Bourgain~\cite{BourgainEstimatesforCone} later obtained improved $L^p$ estimates using bilinear methods. Over the past two decades, this direction has been extensively developed; see, for example, \cites{GustavoSeegerCone, TaoVargasBilinearApproachConeII, WolffLocalSmoothing, Heo, HHY, LeeVargas, GuthWangZhangAsharpSquareFunctionEstimate}. In particular, Lee and Vargas~\cite{LeeVargas} proved sharp $L^3$ estimates for the cone multiplier in $\mathbb{R}^3$, and the conjecture in this setting (dimension $n=3$) was eventually resolved by Guth, Wang, and Zhang~\cite{GuthWangZhangAsharpSquareFunctionEstimate}. We also refer to the recent work of Chen, He, Li, and Yan~\cite{CHLY} on pointwise almost everywhere convergence.

Bilinear analogues of singular multipliers have been also broadly studied in the context of Bochner--Riesz operators, defined by
\[
\mathcal{B}^{\lambda}(f,g)(x)
=
\int_{\mathbb{R}^{2n}}
\big(1-|\xi|^{2}-|\eta|^{2}\big)_{+}^{\lambda}
\widehat{f}(\xi)\widehat{g}(\eta)
e^{2\pi i x\cdot(\xi+\eta)}
\, d\xi \, d\eta,
\quad
f,g\in\mathcal S(\R^n).
\]
When $\lambda=0$, the multiplier reduces to the characteristic function of the unit ball in $\R^{2n}$, which is singular along the sphere. While in the linear setting such operators are bounded only on $L^2$ in dimension $n\ge2$~\cite{Fefferman}, the bilinear theory exhibits a richer range of boundedness phenomena. Bilinear Bochner--Riesz operators have been studied extensively in recent years. We refer to~\cite{GrafakosLiTheDiscBilinearMultiplier,BGSY,LiuWang,JLV,JL,JS} and the references therein for a detailed account of known results and developments.

The purpose of this paper is to study a bilinear analogue of the regularized cone multiplier in dimension two. For $\lambda>0$, consider the bilinear operator in dimension $n$, initially defined for $f,g\in \mathcal S(\R^n)$,
\begin{equation}
\label{eq:bilinear}
\mathcal{C}^\lambda(f,g)(x) = \int_{\mathbb{R}^{2n}} \Big(1 - \frac{|\xi'|^2}{\xi_n^2} - \frac{|\eta'|^2}{\eta_n^2}\Big)_+^\lambda  \varphi(\xi_n)\varphi(\eta_n)\widehat f(\xi)\widehat g(\eta) e^{2\pi i x\cdot(\xi+\eta)}\, d\xi d\eta,
\end{equation}
where $\xi=(\xi',\xi_n)$ and $\eta=(\eta',\eta_n)\in \R^{n-1}\times \R$. This operator can be viewed as a bilinear counterpart of Mockenhaupt's cone multiplier.

A useful feature of this formulation is that, by the transference principle for bilinear multipliers (see, for instance, \cites{Oscar,Shobha,Grafakosclassical}), $L^{p_1}(\R^n)\times L^{p_2}(\R^n)\to L^{p}(\R^n)$ boundedness of $\mathcal C^{\lambda}$ implies the corresponding $L^{p_1}(\R^{n-1})\times L^{p_2}(\R^{n-1})\to L^{p}(\R^{n-1})$ bounds for bilinear Bochner--Riesz means of the same index $\lambda>0$ when $n\ge2$. In this sense, estimates for $\mathcal C^\lambda$ may be viewed as a strengthening of those for bilinear Bochner--Riesz means. This perspective also reflects the additional analytical difficulties inherent in the problem: unlike the elliptic geometry underlying Bochner--Riesz multipliers, the cone exhibits anisotropic scaling and pronounced directional interactions. These features introduce subtleties absent in the elliptic setting and necessitate different techniques in the analysis.

 In this article we establish $L^{p_1}(\R^2)\times L^{p_2}(\R^2)\to L^{p}(\R^2)$ boundedness results for the bilinear cone multiplier \eqref{eq:bilinear} in dimension $n=2$, going beyond the natural $(2,2,1)$ endpoint. This is the first non-trivial setting in which such $L^p$ estimates are investigated for this operator. In dimension $n=2$, the operator $\mathcal C^{\lambda}$, initially defined for $f,g\in \mathcal S(\R^2)$, reads explicitly as
\[
\mathcal C^{\lambda}(f,g)(x):=\int_{\mathbb{R}^{4}}\Big(1-\frac{\xi_1^{2}}{\xi_2^2}-\frac{\eta_1 ^{2}}{\eta_2^2}\Big)^{\lambda}_{+}\widehat{f}(\xi)\widehat{g}(\eta)\varphi (\xi_2)\varphi(\eta_2) e^{2\pi ix\cdot(\xi+\eta)}\,d\xi d\eta.
\]

The study of bilinear cone multipliers was initiated by the second and third authors~\cite{shrivastavashuin_cone}, who obtained almost everywhere convergence results in dimensions $n\geq 3$. Their approach, building on ideas from~\cites{CHLY,CarberyRubioVega,JS}, relies on weighted $L^{2}\times L^{2}\to L^{1}$ estimates for maximal operators associated with scaled bilinear cone multipliers\footnote{The restriction to dimensions $n\ge 3$ in \cite{shrivastavashuin_cone} stems from the use of an estimate for the linear cone multiplier established in \cite{CHLY}. Since the result in \cite{CHLY} is formulated for $n\ge 3$, the argument in \cite{shrivastavashuin_cone} applies only in this range without further modifications. It is plausible that the corresponding statements remain valid in dimension $n=2$, although this case is not addressed there.}. In contrast, the present work focuses directly on $L^p$ boundedness for the operator $\mathcal C^\lambda$ itself, rather than its maximal variant.

The restriction to dimension $n=2$ is not merely technical. In this case, the multiplier exhibits a genuinely two-dimensional geometric structure, allowing for the use of classical geometric methods -such as those related to directional decompositions and almost orthogonality- that are less effective or presently unavailable in higher dimensions. 
To the best of our knowledge, $L^{p_1}\times L^{p_2}\to L^p$ boundedness for $\mathcal C^\lambda$ has not been previously established even in dimension two, and the corresponding problem in higher dimensions remains open. Our results provide a first step in this direction by treating a broad range of exponents in the planar case.

\subsection{Main result and method of proof}

The main result of this paper is the following. Set
\[
p_-:=\min\{p_1,p_2\}, 
\qquad 
p_+:=\max\{p_1,p_2\}.
\]

\begin{theorem}\label{main theorem}
Suppose $1 < p_1,p_2 \le \infty$ and $1 \le p \le \infty$ satisfy $\frac1p=\frac1{p_1}+\frac1{p_2}$.
Then the bilinear operator $\mathcal C^\lambda$ satisfies
\begin{equation}
\label{eq:mainestimate}
\|\mathcal C^{\lambda}(f,g)\|_{L^p(\R^2)}
\le C
\|f\|_{L^{p_1}(\R^2)}
\|g\|_{L^{p_2}(\R^2)},
\end{equation}
for all $f\in L^{p_1}(\R^2)$ and $g\in L^{p_2}(\R^2)$, 
in the following cases:
\begin{enumerate}
\item[(i)] If $2 \le p_1,p_2 \le 4$ and $\lambda>0$.
\smallskip

\item[(ii)] If $2 \le p_- \le 4 \le p_+ \le \infty$ and $\lambda>\frac12-\frac{2}{p_+}$.
\smallskip

\item[(iii)] If $4 < p_1,p_2 \le \infty$ and $\lambda>1-\frac{2}{p}$.
\smallskip

\item[(iv)] If $1 \le p_- \le 2$ and
\[
\lambda>
\begin{cases}
\displaystyle \frac{2}{p_-}-1, 
& \text{if } \frac1{p_-}+\frac{2}{p_+}\ge 1, \\[8pt]
\displaystyle \frac1{p_-}-\frac{2}{p_+}, 
& \text{if } \frac1{p_-}+\frac{2}{p_+}\le 1.
\end{cases}
\]
{\label{main theorem p<2 1}}
\end{enumerate}
\end{theorem}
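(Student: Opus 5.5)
We follow the strategy announced in the abstract: decompose the bilinear multiplier into pieces that factor as products of linear cone-type multipliers, estimate each piece through square functions, and sum.

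\textbf{Decomposition.} Write $u=\xi_1^2/\xi_2^2$ and $v=\eta_1^2/\eta_2^2$, so the multiplier is $(1-u-v)_+^\lambda\varphi(\xi_2)\varphi(\eta_2)$. Decompose $(1-t)_+^\lambda=\sum_{j\ge 0}2^{-j\lambda}\psi_j(t)$, where $\psi_j$ is a smooth bump supported where $1-t\sim 2^{-j}$ (for $j\ge1$), with the usual derivative bounds. For fixed $j$, perform a Whitney-type decomposition in the variable $u\in[0,1]$ at scale $2^{-j}$: write $\psi_j(u+v)=\sum_{k}\psi_j(u+v)\chi_k(u)$ with $\chi_k$ adapted to intervals $I_k$ of length $2^{-j}$. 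On the support, $v$ then lies in an interval $J_k$ of length $\lesssim 2^{-j}$ near $1-u$. Expand $\psi_j(u+v)\chi_k(u)\tilde\chi_k(v)$ in a Fourier series in $(u,v)$ on $I_k\times J_k$. This produces rapidly decaying sums of products $m_{k,\ell}(\xi)\,\tilde m_{k,\ell'}(\eta)$, where each factor is a smooth cutoff to a conical sector $\{\xi_1/\xi_2\in \text{an interval}\}$, times $\varphi(\xi_2)$, with the interval's position dictated by $u\in I_k$ of size $2^{-j}$. Near $u\approx 0$ (and $v\approx0$) these sectors have angular width $\sim 2^{-j/2}$; elsewhere their width is $\sim 2^{-j}$. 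This non-uniformity is where the lack of curvature shows up.

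\textbf{Piecewise estimate.} For each $j$, Cauchy--Schwarz in $k$ gives
\[
|T_j(f,g)|\le S_jf\cdot \tilde S_jg,
\]
where $S_jf=(\sum_k|M_kf|^2)^{1/2}$ is a square function for conical sector multipliers and $\tilde S_j$ is its analogue for the complementary sectors. Hölder then reduces everything to the linear bounds $\|S_jf\|_{L^q}\lesssim 2^{j\alpha(q)}\|f\|_{L^q}$. Three inputs are needed:
\begin{itemize}
\item for $q=2$, Plancherel with bounded overlap gives $\alpha=0$;
\item for $q=4$, a Córdoba/Carbery-type argument — squaring, using the almost orthogonality of sums of sectors, and a Kakeya/Nikodym directional maximal estimate in $\R^2$ — gives $\alpha(4)=\epsilon$;
\item for $q=\infty$ (and small $q$), pointwise domination by strong maximal functions over rectangles of eccentricity $2^{j}$ gives $\alpha(\infty)=1/2$, from counting $\sim 2^{j/2}$ sectors near the vertex with $L^1$-kernel bounds, up to logs.
\end{itemize}
Interpolation gives $\alpha(q)=\epsilon$ for $2\le q\le4$ and $\alpha(q)=\tfrac12-\tfrac2q$ for $q\ge4$. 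For $q<2$, we instead use a trivial kernel bound $\|T_j\|\lesssim 2^{j}$ on $L^1\times L^\infty$-type endpoints and interpolate with the $L^2$ bound; this gives the growth $2^{j(2/p_--1)}$, or $2^{j(1/p_--2/p_+)}$ when paired with an $L^{p_+}$ square function estimate for large $p_+$.

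\textbf{Summation.} Summing $2^{-j\lambda}2^{j(\alpha(p_1)+\alpha(p_2))}$ produces cases (i)–(iii). In case (iii), $\alpha(p_1)+\alpha(p_2)=1-2/p$. In case (ii), only the $p_+$ factor is lossy. Case (iv) comes from bilinear complex interpolation between the $L^2\times L^2$, $L^1\times L^\infty$ and $L^2\times L^\infty$ bounds, with the two regimes separated by the line $1/p_-+2/p_+=1$.

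\textbf{Main obstacle.} We expect the hardest step to be the sharp $L^4$ square function bound. The sectors have mixed widths ($2^{-j/2}$ near the edge versus $2^{-j}$ elsewhere), so the Córdoba biorthogonality must be redone with a directional maximal operator over non-lacunary directions of varying eccentricity. The plan is to first split the sectors dyadically by their distance to the vertex and apply Córdoba's argument on each dyadic family.
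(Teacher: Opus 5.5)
Your outline is viable and reproduces exactly the stated ranges. Everything reduces to a linear sector square function with growth $2^{j\alpha(q)}$, $\alpha(q)=\max\{0,\frac12-\frac2q\}$ for $q\ge2$, up to powers of $j$. But your decomposition is genuinely different from the paper's.

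\textbf{The paper's route.} It decomposes only in $\xi$, according to $1-\xi_1^2/\xi_2^2\sim 2^{-j}$. It then uses the Stein--Weiss subordination identity to write $\mathcal C_j^\lambda(f,g)=c_{\mu,\nu}\int_0^{\sqrt{2^{1-j}}}T^\nu_tg\,B^\mu_{j,t}f\,t^{2\nu+1}\,dt$ with $\lambda=\mu+\nu$. Cauchy--Schwarz in $t$ then produces continuous Stein-type square functions of linear cone multipliers of every aperture $t$.
\begin{itemize}
\item Plancherel handles $L^2$.
\item Pointwise domination by directional strong maximal functions (Propositions~\ref{Tnu}, \ref{Bmu}, \ref{operatorB}) handles $p>2$. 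It also gives directly the $\lambda>1$ bound of Proposition~\ref{T_j estimate 2} for all $1<p_1,p_2\le\infty$.
\item $L^4$ comes from the trapezoid square function of Proposition~\ref{square function estimate} combined with Kakeya maximal bounds.
\end{itemize}

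\textbf{Your route.} You use the bilinear Bochner--Riesz machinery: dyadic shells around $u+v=1$, a $2^{-j}$-partition in $u$, Fourier series to separate variables, and discrete sector square functions. This buys three things.
\begin{itemize}
\item A symmetric treatment of $f$ and $g$, with no separate $j=1$ term.
\item A trivial corner estimate: the triangle inequality over $\sim2^j$ products of $L^1$-bounded kernels, valid even at $p_-=1$.
\item An easier $L^4$ step than you anticipate. Your sector widths lie between $2^{-j}$ and $2^{-j/2}$, so there are only $O(j)$ dyadic families. Each family consists of sectors of comparable width, so it is covered with bounded overlap by part of an equiangular partition into $\lesssim 2^j$ sectors. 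C\'ordoba's classical theorem gives $j^C$ per family, after replacing smooth by sharp sectors via a directional maximal function with logarithmic $L^2$ bound. The triangle inequality over families costs one more factor $j$, so no mixed-eccentricity biorthogonality is needed.
\end{itemize}
In the paper, by contrast, the $L^4$ input is formulated over the trapezoids $S_n^j$ in all lacunary bands $X_n$, since the apertures $t$ range over $(0,\sqrt{2^{1-j}})$. That is why it proves a C\'ordoba estimate uniform in the aperture (Proposition~\ref{directional square function estimate}) and glues bands by a biorthogonality/bootstrap argument. In exchange, it obtains pointwise maximal control of the linear cone pieces, which is of independent use.

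\textbf{Two details to correct.}

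First, the sector attached to $u\in I_k$ has angular width $\sim2^{-j}u^{-1/2}$. This varies continuously rather than falling into two regimes, and there are $N_j\sim2^j$ sectors in all. So $\alpha(\infty)=\frac12$ comes from
\[
\|S_jf\|_\infty\le N_j^{1/2}\sup_k\|M_k\|_{L^\infty\to L^\infty}\|f\|_\infty\lesssim2^{j/2}\|f\|_\infty,
\]
not from $2^{j/2}$ sectors near the vertex.

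Second, interpolating only the $L^2\times L^2$, $L^1\times L^\infty$ and $L^2\times L^\infty$ bounds yields only $\lambda>\frac1{p_-}-\frac1{p_+}$. This is in general weaker than claimed: at $(\frac1{p_-},\frac1{p_+})=(\frac58,\frac14)$ it gives $\frac38$ instead of $\frac14$. You must interpolate the $2^{j}$ bound at $(p_-,p_+)=(1,\infty)$ against the whole segment $p_-=2$, $2\le p_+\le\infty$, supplied by (i)--(ii), in particular against $(p_-,p_+)=(2,4)$. The line through $(\frac1{p_+},\frac1{p_-})=(0,1)$ and $(\frac14,\frac12)$ is exactly $\frac1{p_-}+\frac2{p_+}=1$. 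This is the paper's own interpolation scheme.
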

The admissible region for the boundedness of $\mathcal{C}_{\lambda}$ in Theorem \ref{main theorem} is depicted in Figure \ref{fig:picture}.
\begin{remark}
\rm It is natural to conjecture that \eqref{eq:mainestimate} holds for all $\lambda>0$ throughout the full range of exponents $(p_1,p_2,p)$ satisfying the H\"older scaling condition. This would be consistent with the corresponding result for the bilinear Bochner--Riesz and the bilinear ball multiplier in one dimension due to Bernicot, Grafakos, Song, and Yan~\cite[Theorem 4.1]{BGSY}.
\end{remark}

\begin{center}
\begin{figure}[h]
\begin{tikzpicture}
\begin{axis}[
    view={130}{70},           
    axis lines=center,        
    xlabel=$\frac{1}{p_1}$, ylabel=$\frac{1}{p_2}$, zlabel=$\lambda$,
    xlabel style={at={(axis cs:1.4,0,0.1)}, anchor=south west},
    ylabel style={at={(axis cs:0,1.1,0)}, anchor=south west},
    zlabel style={at={(axis cs:0,0,1.1)}, anchor=south},
    xmin=0, xmax=1.1,
    ymin=0, ymax=1.1,
    zmin=0, zmax=1.1,
    xtick={0.25,0.5,1},
    xticklabels={\(\frac{1}{4}\),\(\frac{1}{2}\),1},
    ytick={0.25,0.5,1},
    yticklabels={\(\frac{1}{4}\),\(\frac{1}{2}\),1},
    ztick={0.5,1},
    zticklabels={\(\frac{1}{2}\),1},
    grid=major,              
    colormap/hot,           
    samples=50             
]

\addplot3[
    surf, 
    domain=0.25:0.5,        
    domain y=0.25:0.5, 
    draw=none,
    shader=interp,
    point meta=z, 
    restrict z to domain=0:1,
    opacity=0.8
] {0};

\addplot3[
    surf,                    
    domain=0.25:0.5,        
    domain y=0:0.25,  
    draw=none,
    shader=interp,
    point meta=z, 
    restrict z to domain=0:1,   
   opacity=0.8 
] {0.5-2*y};

\addplot3[
    surf,                    
    domain=0:0.25,        
    domain y=0.25:0.5,  
    draw=none,
    shader=interp,
    point meta=z, 
    restrict z to domain=0:1,   
   opacity=0.8 
] {0.5-2*x};

\addplot3[
    surf,                    
    domain=0:0.25,        
    domain y=0:0.25,  
    draw=none,
    shader=interp,
    point meta=z, 
    restrict z to domain=0:1,   
   opacity=0.8 
] {1-2*x-2*y};

\addplot3[
    surf,                    
    domain =0.5:1,        
    domain y=0:1, 
    draw=none,
    shader=interp,
    point meta=z, 
    opacity=0.8,
    restrict expr to domain={y - (0.5 -0.5*x)}{-inf:0}
 ] {x-2*y};

\addplot3[
    surf,                    
    domain =0.5:1,        
    domain y=0:1, 
    draw=none,
    shader=interp,
    point meta=z, 
    opacity=0.8,
    restrict expr to domain={y - (0.5 -0.5*x)}{0:inf},
    restrict expr to domain={y + x -1}{-inf:0},
]{2*x-1};

\addplot3[
    surf,                    
    domain =0:1,        
    domain y=0.5:1, 
    draw=none,
    shader=interp,
    point meta=z, 
    opacity=0.8,
    restrict expr to domain={x - (0.5 -0.5*y)}{-inf:0}
 ] {y-2*x};

 \addplot3[
    surf,                    
    domain =0:1,        
    domain y=0.5:1, 
    draw=none,
    shader=interp,
    point meta=z, 
    opacity=0.8,
    restrict expr to domain={x - (0.5 -0.5*y)}{0:inf},
    restrict expr to domain={x + y -1}{-inf:0},
]{2*y-1};
\end{axis}
\end{tikzpicture}
\caption{\label{figure:exponent-region}
Admissible region for $(1/p_1,1/p_2,\lambda)$ in Theorem~\ref{main theorem}. The piecewise surface represents the threshold conditions on $\lambda$ arising from the different cases of the theorem; it is conjectured that the entire region above $\lambda=0$ is admissible.
}
\label{fig:picture}
\end{figure}
\end{center}

We briefly outline the strategy of the proof. We begin with the dyadic decomposition of the bilinear cone multiplier introduced in~\cite{shrivastavashuin_cone} and a representation of the dyadic pieces exploited in \cite{JS} in the context of the bilinear Bochner--Riesz operator, reducing the problem to establishing suitable $L^p$ estimates for square functions associated with linear cone multipliers and their shifted variants. The main new ingredients are twofold. First, we perform a refined analysis of these square functions to obtain pointwise control via suitable strong maximal functions. Second, we establish $L^4$ estimates by adapting geometric methods inspired by C\'ordoba~\cite{Cor1982}. More precisely, we decompose the frequency space into thin trapezoidal regions and study the associated directional square functions and Kakeya-type maximal operators. While there are similarities with recent work on rough frequency decompositions and directional square functions (see~\cite{ADHPR23}), we do not rely on these developments, and any connection appears to be indirect at best. Our approach is instead more direct and tailored to the present setting. Finally, interpolation between the $L^p$ bounds obtained as a consequence of the pointwise estimates and $L^4$ estimates leads to the desired $L^{p_1}(\R^2)\times L^{p_2}(\R^2)\to L^{p}(\R^2)$ boundedness of $\mathcal C^{\lambda}$.

\subsection*{Organization of the paper}

The main steps in the proof of Theorem~\ref{main theorem} are summarized in Figure~\ref{figure1}. The argument proceeds via a dyadic decomposition of the operator, reducing to estimates for the pieces $\mathcal C_j^\lambda$.  In the case of small $\lambda$ (Proposition~\ref{T_j estimate}) we separately treat the low-frequency term ($j=1$) and the high-frequency terms ($j\ge2$). The latter are further reduced to square function estimates, which are obtained using a combination of almost orthogonality arguments and kernel bounds.
  
\begin{figure}[H]
	\centering
	\resizebox{1\textwidth}{!}{
		\begin{tikzpicture}[
			node distance = 1.4cm and 2cm,
			every node/.style = {
				draw,
				rounded corners,
				align=center,
				text width=4cm,   
				minimum height=1cm,
				font=\small         
			},
			arrow/.style = {->, thick}
			]
			
			\node (start)
			{$L^p$-estimates for $\mathcal C^{\lambda}$ \\
				Theorem~\ref{main theorem}};
			
			\node (deco) [below=of start]
			{Decomposition \\$\mathcal C^{\lambda}=\sum_{j\geq 1} \mathcal C_j^{\lambda}$ \\
				Section~\ref{Decomposition}};
			
			\node (estimateCj) [below=of deco]
			{$L^p$ bounds for $\mathcal C_j^{\lambda}$ 
				Propositions \ref{T_j estimate}, \ref{T_j estimate 2}};
			
			
			\node (strongmax) [right=of estimateCj]
			{Proof of Proposition~\ref{T_j estimate 2}\\ via kernel estimates\\
				Section~\ref{sec:dominationbystrongmax}};

			\node (jis1) [below right=of estimateCj]
			{Proof of Proposition~\ref{T_j estimate}, $j=1$\\ ad hoc decomposition\\ Section~\ref{j=1case}};
			
			\node (viasquare) [below left=of estimateCj]
			{Proof of Proposition~\ref{T_j estimate}, $j\geq2$\\ assuming square function estimates\\
				Propositions~\ref{square function Gnu}, \ref{square function Gmu}
                \\Section \ref{sec:squarefunctiondomination}};
			
			\node (l4) [below=of viasquare, xshift=6cm]
			{$L^4$ estimates of square functions \\ Section~\ref{sec:L4sf}};
			
			\node (pnot4) [below=of viasquare, xshift=-6cm]
			{$L^p$ estimates of square functions for $p\neq4$\\ Section~\ref{sec:pnot4}};
			
			\draw [arrow] (start) -- (deco);
			\draw [arrow] (deco) -- (estimateCj);
			
			\draw [arrow] (estimateCj) -- (jis1);
			\draw [arrow] (estimateCj) -- (strongmax);
			
			\draw [arrow] (estimateCj) -- (viasquare);
			\draw [arrow] (viasquare) -- (l4);
			\draw [arrow] (viasquare) -- (pnot4);
			
		\end{tikzpicture}
	}
	\caption{\label{figure1} Scheme of proof of Theorem~\ref{main theorem}}
\end{figure}

\subsection*{Notation}
Throughout the paper, we use standard notation. We write $A\lesssim B$ to indicate that $A\leq C B$ for some constant $C>0$ independent of the main parameters, and $A\approx B$ if both $A\lesssim B$ and $B\lesssim A$ hold. The value of $C$ may change from line to line.

\section{Decomposition of the multiplier. Proof of Theorem \ref{main theorem}}{\label{Decomposition}}

Consider $\xi=(\xi_1,\xi_2)\in \R^2$ and $\eta=(\eta_1,\eta_2)\in \R^2$, and let $\varphi \in C_{c}^{\infty}([1/2,2])$. Define
\begin{equation}
\label{eq:mlambda}
m^{\lambda}(\xi,\eta)=\Big(1-\frac{ \xi_1^{2}}{\xi_2^2}-\frac{ \eta_1 ^{2}}{\eta_2^2}\Big)^{\lambda}_{+}\varphi(\xi_2)\varphi(\eta_2).
\end{equation}
Let $\psi\in C^{\infty}_{c}([1/2,2])$ and $\psi_{1}\in C^{\infty}_{c}([0,3/4])$ be such that 
\begin{equation}
\label{eq:partition}
\sum_{j\geq2}\psi(2^{j}(1-t))+\psi_{1}(t)=1,\quad  t\in [0,1].
\end{equation}
We first decompose the bilinear multiplier $m^{\lambda}$ using a partition of unity in the $\xi$-variable. We write
\begin{equation}
\label{eq:decomp}
	m^{\lambda}(\xi,\eta)=\sum_{j\geq2}\psi\Big(2^{j}\big(1-\frac{\xi_1^2}{\xi^2_2}\big)\Big)m^{\lambda}(\xi,\eta)+\psi_{1}\Big(\frac{\xi_1^2}{\xi^2_2}\Big)m^{\lambda}(\xi,\eta)=:\sum_{j\geq 1}m^{\lambda}_{j}(\xi,\eta),
\end{equation}
where 
\begin{equation}
\label{eq:mlambdaj}
	m^{\lambda}_{j}(\xi,\eta)
	=\psi\Big(2^{j}\big(1-\frac{\xi_1^2}{\xi^2_2}\big)\Big)m^{\lambda}(\xi,\eta), \quad j\geq 2,
\end{equation}
and
\begin{equation}
\label{eq:ml1}
m^{\lambda}_{1}(\xi,\eta )=\psi_{1}\Big(\frac{\xi_1^2}{\xi^2_2}\Big)m^{\lambda}(\xi,\eta).
\end{equation}
Let $\mathcal C^{\lambda}_{j}$ denote the bilinear multiplier operator associated with $m^{\lambda}_{j}$; that is,   
\begin{equation}
\label{eq:Cljoriginal}
\mathcal C^{\lambda}_{j}(f,g)(x):=\int_{\mathbb{R}^{4}}m^{\lambda}_{j}(\xi,\eta) \widehat{f}(\xi)\widehat{g}(\eta)e^{2\pi i x\cdot(\xi+\eta)}~d\xi\, d\eta,\quad  j\geq 1.
\end{equation}
Observe that, in view of the bilinear interpolation and summation in $j$, Theorem~\ref{main theorem} reduces to establishing suitable estimates for the operators $\mathcal C_j^{\lambda}$. We begin by stating the main bounds that will imply Theorem {\ref{main theorem}}: items (i), (ii), and (iii) will follow from the following proposition.
\begin{proposition}{\label{T_j estimate}}
Let $2 \le p_1,p_2 \le \infty$ and $1 \le p \le \infty$ satisfying
$\frac1p=\frac1{p_1}+\frac1{p_2}$. 
Then for $j\ge1$,
\[
\|\mathcal C_j^{\lambda}(f,g)\|_{L^p}
\lesssim 2^{-\varepsilon  j} \|f\|_{L^{p_1}}\|g\|_{L^{p_2}},
\]
holds for every  $f\in L^{p_1}(\R^2)$ and $g\in L^{p_2}(\R^2)$, and for some $\varepsilon>0$,
in the following cases:
\begin{enumerate}
\item If $(p_1,p_2)\in \{(2,2), (2,4), (4,2),(4,4)\}$ and $\lambda>0$; 
\label{main estimate 1}
\item If $2 \le p_- \le 4 \le p_+ \le \infty$ and $
\lambda > \frac12-\frac{2}{p_+}$;
\label{main estimate 4}
\item If $4 \le p_1, p_2 \le \infty$ and $\lambda > 1-\frac{2}{p}$.
\label{main estimate 5}
\end{enumerate}
\end{proposition}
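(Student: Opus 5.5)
We treat the low-frequency piece $j=1$ separately from the pieces $j\ge2$. For $j=1$ the multiplier $m_1^\lambda$ is supported where $\xi_1^2/\xi_2^2\le 3/4$, so the singular set $\{\eta_1^2/\eta_2^2 = 1-\xi_1^2/\xi_2^2\}$ lies at angle bounded away from the degenerate directions $|\eta_1|=|\eta_2|$. We would localize in $\eta_1/\eta_2$ by a smooth partition and expand the remaining factor. Concretely, write $(1-s-t)_+^\lambda$ as a superposition in $s$ of one-parameter families in $t$, using a smooth cutoff in $s$ together with a Fourier series (or Mellin) expansion. This reduces $\mathcal C_1^\lambda$ to a rapidly convergent sum of products $T_k f\cdot S_k g$. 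Here the $T_k$ are smooth multipliers of $f$, and the $S_k$ are linear regularized cone-type multipliers $(c_k-\eta_1^2/\eta_2^2)_+^\lambda\varphi(\eta_2)$, which in two dimensions are bounded on all $L^p$, $1<p<\infty$, for every $\lambda>0$ (their singular set consists of lines). Hölder's inequality then handles the whole range, so the real work is $j\ge2$.

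For $j\ge2$ we use the representation from \cite{JS}. Setting $s=1-\xi_1^2/\xi_2^2\approx 2^{-j}$, we write
\[
m_j^\lambda=\psi(2^js)\,(s-\eta_1^2/\eta_2^2)_+^\lambda\,\varphi(\xi_2)\varphi(\eta_2),
\]
and Taylor/Fourier-expand $(s-t)_+^\lambda$ in the variable $s$ on the interval $s\sim2^{-j}$. Equivalently, we decompose $(s-t)^\lambda_+$ dyadically in $s-t\sim 2^{-j-\ell}$. This gives, modulo rapidly decaying tails,
\[
\mathcal C_j^\lambda(f,g)=2^{-j\lambda}\sum_{\nu}c_{\nu}\,\int A_{j,\nu}f\, B_{j,\nu}g\,(\cdots).
\]
After Cauchy--Schwarz in the summation parameter, this is controlled by $2^{-j\lambda}\,G_\nu f\cdot G_\mu g$, where $G_\nu,G_\mu$ are the square functions of Propositions~\ref{square function Gnu} and~\ref{square function Gmu}. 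These are built from linear cone pieces $\psi(2^j(1-\xi_1^2/\xi_2^2)-\nu)\varphi(\xi_2)$ and their shifted variants. The bilinear estimate then follows from Hölder's inequality once we know $\|G f\|_{p}\lesssim 2^{j\alpha(p)}\|f\|_p$ with explicit exponents. Since each frequency piece lives in a union of $O(1)$ trapezoids of width $2^{-j}$ in angle near the lines $\xi_1=\pm\xi_2$, we expect the following:
\begin{itemize}
\item At $L^2$, Plancherel gives $\alpha(2)=0$.
\item At $L^4$, the Córdoba--Carbery argument gives $\alpha(4)=\epsilon$. We would expand $\|Gf\|_4^2=\|\sum|P_kf|^2\|_2$, use the bounded overlap of the algebraic sums of pairs of thin trapezoids, and control the result by the Kakeya/Nikodym maximal function in $O(2^{j})$ directions, which costs only $\log$ factors.
\item At $L^\infty$, the pointwise domination by strong maximal functions in rotated coordinates gives $\alpha(\infty)=1/2$, since the kernel has $L^1$ norm about $2^{j/2}$ after square-summing.
\end{itemize}

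Granting these bounds, the three cases follow by combining Hölder's inequality with these values and interpolating. In case (1), at the points $(2,2),(2,4),(4,2),(4,4)$ we get $2^{-j\lambda+\epsilon j}$, which sums for any $\lambda>0$. In case (2), interpolating the $g$-bound between $4$ and $\infty$ gives $\alpha(p_+)=\tfrac12-\tfrac2{p_+}$. In case (3), the two losses add, giving $1-\tfrac2{p_1}-\tfrac2{p_2}=1-\tfrac2p$. Bilinear (complex) interpolation of the family $2^{-j(\lambda-\alpha)}$ fills in the intermediate exponents, and the geometric decay $2^{-\varepsilon j}$ appears as soon as $\lambda$ exceeds the stated threshold. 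The main obstacle is the sharp $L^4$ square function bound with only $2^{\epsilon j}$ loss. This requires verifying the almost-orthogonality of sums of trapezoids near the two degenerate directions of the cone, and proving the directional maximal estimate for rectangles of eccentricity $2^j$ in $2^j$ directions with logarithmic loss. We would attempt this by adapting Córdoba's $L^2$ bound for the Nikodym maximal function.
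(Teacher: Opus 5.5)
\textbf{The $j=1$ step fails.} Your reduction of the $j=1$ piece rests on a false premise. By \eqref{eq:partition}, the pieces $\psi(2^j(1-t))$, $j\ge2$, live on $t\ge1/2$, so $\psi_1\equiv1$ on $[0,1/2)$. Hence $m_1^\lambda$ contains the region $\xi_1\approx0$, where its singular set $\eta_1^2/\eta_2^2=1-\xi_1^2/\xi_2^2$ runs into the edge $|\eta_1|=|\eta_2|$.

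Since $m^\lambda$ is symmetric in $(\xi,\eta)$, this corner is the mirror image of the regime $j\to\infty$. Inserting $\psi(2^k(1-\eta_1^2/\eta_2^2))$ gives $T^\lambda_{1,k}(f,g)=\mathcal C^\lambda_k(g,h)$, so $\mathcal C_1^\lambda$ is as hard as the whole operator with $f$ and $g$ interchanged. This is exactly how the paper proceeds, and it is one reason the $j\ge2$ bounds must hold for both orderings of $(p_1,p_2)$. The leftover piece $m^\lambda_{1,12}$ is then treated with the Stein--Weiss identity in the other variable plus square-function estimates.

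Moreover, the separation you propose cannot exist:
\begin{itemize}
\item Factors $(c_k-\eta_1^2/\eta_2^2)_+^\lambda$ are singular on fixed lines, while the singular line of $m_1^\lambda(\xi,\cdot)$ moves with $\xi$.
\item Any expansion $\sum_k a_k(\xi)b_k(\eta)$ with smooth $a_k$ and $\sum_k\|a_k\|_{C^N}\|b_k\|_{L^\infty}<\infty$, $N>\lambda$, would make $m_1^\lambda$ of class $C^N$ in $\xi$, which it is not.
\item An honest Fourier expansion in $s=\xi_1^2/\xi_2^2$ does converge, with coefficients of size $n^{-1-\lambda}$. But the factors $e^{2\pi i n\xi_1^2/\xi_2^2}$, and likewise the $\eta$-coefficients (which oscillate like $e^{2\pi i n\eta_1^2/\eta_2^2}$), have $L^4$ multiplier norms of order $n^{1/4}$. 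Summing yields only $\lambda>1/2$ at $(4,4)$ and $\lambda>1/4$ at $(2,4)$, $(4,2)$.
\end{itemize}
So H\"older does not dispose of $j=1$; it needs the same $L^4$ machinery as $j\ge2$.

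\textbf{The $j\ge2$ step is incomplete.} Here (your $s=1-\xi_1^2/\xi_2^2$, $t=\eta_1^2/\eta_2^2$) your two descriptions are not equivalent. Fourier-expanding $(s-t)_+^\lambda$ in $s$ over $s\sim2^{-j}$ fails for the same reason, since the singularity $s=t$ lies inside that interval.

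The workable version is the dyadic one, in the spirit of \cite{JLV}. Localize to $s-t\sim2^{-j-\ell}$ (call this piece $\mathcal C^\lambda_{j,\ell}$), cut $s$ into $\sim2^\ell$ intervals of length $2^{-j-\ell}$, expand in the now smooth variable, and apply Cauchy--Schwarz in the interval index. This gives
\[
\|\mathcal C^\lambda_{j,\ell}(f,g)\|_p\lesssim 2^{-(j+\ell)\lambda}\,\|S_{j,\ell}f\|_{p_1}\,\|S'_{j,\ell}g\|_{p_2},
\]
where $S_{j,\ell}$ and $S'_{j,\ell}$ are square functions over $2^\ell$ comparable, smoothly cut and modulated sub-sectors of $\{1-\xi_1^2/\xi_2^2\sim2^{-j}\}$ and of $\{\eta_1^2/\eta_2^2\lesssim2^{-j}\}$. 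The $L^\infty$ loss is $2^{\ell/2}$, not $2^{j/2}$. With this bookkeeping, summing in $\ell$ reproduces exactly the thresholds in (1)--(3).

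That would be a genuinely different route from the paper's. The paper uses the subordination \eqref{eq:twoperators}, Cauchy--Schwarz in the continuous parameter $t$, and the square functions $G^{\mu,j}_a$, $H^{\nu,j}_b$. These are controlled by Plancherel at $L^2$, by the strong-maximal domination of Propositions~\ref{Tnu}--\ref{operatorB} for $p>4$, and by the trapezoid square function plus Kakeya bounds at $L^4$, with the split $\lambda=\mu+\nu$ tuned to each range.

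Your square functions are not the paper's $G$ and $H$, so you cannot quote Propositions~\ref{square function Gnu} and~\ref{square function Gmu}. You must prove
\[
\|S_{j,\ell}f\|_4+\|S'_{j,\ell}f\|_4\lesssim(j+\ell)^C\|f\|_4
\]
yourself. This includes passing from sharp sector projections to the smooth modulated cutoffs, via a Kakeya/Nikodym maximal function with polylogarithmic $L^2$ bound. A polylogarithmic loss is harmless against $2^{-(j+\ell)\lambda}$, and C\'ordoba's sectorial square function \cite{Cor1982} puts this within reach. But it is the heart of the argument, and your proposal leaves it open.
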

In turn, to achieve (iv) in Theorem  \ref{main theorem}, the task will boil down to proving the following proposition. 
\begin{proposition}{\label{T_j estimate 2}}
Let $1 < p_1, p_2 \le \infty$ and $1 \le p \le \infty$ satisfying $\frac{1}{p} = \frac{1}{p_1}+ \frac{1}{p_2}$, then for $\lambda>1$ and $j \ge 1$,
$$
\|\mathcal C_j^{\lambda}(f,g)\|_p \lesssim 2^{-\varepsilon j }\|f\|_{p_1}\|g\|_{p_2}
$$
holds for every $f \in L^{p_1}(\R^2)$ and $ g\in L^{p_2}(\R^2)$, and for some $\varepsilon>0$.
\end{proposition}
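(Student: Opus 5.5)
The idea is to bound each piece $\mathcal C_j^\lambda$ pointwise by a product of strong (rectangular) maximal functions of $f$ and $g$, with a constant that decays in $j$ once $\lambda>1$. The claimed estimate then follows from H\"older's inequality and the $L^q$ boundedness of the strong maximal function $M_S$ for $1<q\le\infty$:
\[
\|\mathcal C_j^\lambda(f,g)\|_p\le 2^{-\varepsilon j}\|M_Sf\|_{p_1}\|M_Sg\|_{p_2}\lesssim 2^{-\varepsilon j}\|f\|_{p_1}\|g\|_{p_2}.
\]
This works for all $1<p_1,p_2\le\infty$, including $p<1$ if needed.

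\textbf{Step 1: kernel estimate for $m_j^\lambda$.} Write
\[
\mathcal C_j^\lambda(f,g)(x)=\iint K_j(x-y,x-z)f(y)g(z)\,dy\,dz,
\]
where $K_j$ is the inverse Fourier transform of $m_j^\lambda$ on $\R^4$. It suffices to show
\[
\|K_j\|_{L^1(\R^4,\,w)}\lesssim 2^{-\varepsilon j},
\]
for a weight $w$ adapted to rectangles. Concretely, I aim to dominate $|K_j(y,z)|$ by a sum $\sum_k c_k\,\Phi_{R_k}(y)\,\Phi_{R'_k}(z)$, where each $\Phi_R$ is an $L^1$-normalized bump adapted to an axis-parallel rectangle $R$ and $\sum_k c_k\lesssim 2^{-\varepsilon j}$. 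Convolution with each such product bump is controlled by $M_Sf(x)\,M_Sg(x)$.

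On the support of $m_j^\lambda$ we have $\xi_2,\eta_2\sim1$ and $1-\xi_1^2/\xi_2^2\sim2^{-j}$. In the $\xi$ variable the multiplier therefore lives on two thin curved strips of width $2^{-j}$ near the lines $\xi_1=\pm\xi_2$. These strips are flat; no curvature is present, since $n=2$. The factor $(1-\xi_1^2/\xi_2^2-\eta_1^2/\eta_2^2)_+^\lambda$ then forces
\[
\eta_1^2/\eta_2^2\lesssim 2^{-j}, \qquad \text{so } |\eta_1|\lesssim 2^{-j/2}.
\]

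\textbf{Step 2: remove the coupling.} To handle the coupling between $\xi$ and $\eta$, I would expand the $\eta$-dependence in a Fourier series or Taylor expansion in the variable $s=\eta_1^2/\eta_2^2$, which ranges over $[0,c2^{-j}]$. After rescaling $s$ by $2^{j}$, the function
\[
t\mapsto\bigl(2^{j}(1-\xi_1^2/\xi_2^2)-t\bigr)_+^{\lambda}\,\psi\bigl(2^{j}(1-\xi_1^2/\xi_2^2)\bigr)
\]
has amplitude $2^{-j\lambda}$. Expanding it in Fourier series on an interval of length $\sim1$ gives a sum of tensor products $a_k(\xi)b_k(\eta)$. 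The coefficients decay like $(1+|k|)^{-1-\lambda}$, reflecting the $C^\lambda$ singularity. Each tensor factor is a smooth bump:

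\begin{itemize}
\item in $\xi$, adapted to a strip of dimensions $2^{-j}\times1$ (in the $\xi_1$ and $\xi_2$ directions, up to the skew direction of the strip);
\item in $\eta$, adapted to a box of dimensions $2^{-j/2}\times1$, modulated by $e^{2\pi i k 2^j\eta_1^2/\eta_2^2}$.
\end{itemize}

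\textbf{Step 3: control each product term.} For each $k$, the operators $T_{a_k}$ and $T_{b_k}$ have kernels bounded by rectangle-adapted bumps. The $\eta$-factor carries a modulation with derivatives of size $(1+|k|)$ per unit scale, so its kernel $L^1$ norm grows polynomially in $|k|$. The $\xi$-strip is not axis-parallel; it is a neighborhood of the line $\xi_1=\pm\xi_2$. I would handle this by a linear change of variables $(\xi_1\pm\xi_2,\xi_2)$, which turns the strip into a parallelogram adapted to a fixed direction. Bumps adapted to that parallelogram are dominated by the maximal function over rectangles in two fixed directions, and these are bounded on $L^q$ by applying the strong maximal theorem in rotated coordinates.

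Summing the terms gives a pointwise bound of the form
\[
|\mathcal C_j^\lambda(f,g)|\lesssim 2^{-j\lambda}\,2^{j}\sum_k (1+|k|)^{-1-\lambda+N}\,\tilde M f\,\tilde M g,
\]
where the factor $2^{j}$ is the kernel $L^1$ growth coming from the thin support and the derivative losses. I expect the net constant to be $2^{-j(\lambda-1)}$, which gives the gain $\varepsilon=\lambda-1>0$. This matches the hypothesis $\lambda>1$.

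\textbf{Main obstacle.} The hard part is the precise kernel bookkeeping. One must show that the total loss from the thin $\xi$-strips, the $\eta$-localization to $|\eta_1|\lesssim2^{-j/2}$, and the summation in $k$ is at most $2^{j}$, rather than something like $2^{3j/2}$. If the Fourier expansion turns out too lossy, I would instead integrate by parts directly in the kernel of $m_j^\lambda$, using the $\lambda$-H\"older regularity of $(\cdot)_+^\lambda$ to get decay of order $|\cdot|^{-1-\lambda}$ in the dual variable to $s$.
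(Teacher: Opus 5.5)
\textbf{The separation of variables you chose does not close for $1<\lambda\le 2$, and this is not just bookkeeping.} Your framework is sound: dominate $\mathcal C_j^\lambda(f,g)$ pointwise by products of directional strong maximal functions, then apply H\"older (which also covers $p<1$). The paper's proof has the same shape. The problem is the Fourier-series tensorization in $\tau=2^j\eta_1^2/\eta_2^2$.

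\emph{The $\eta$-factor.} It is $\chi_0(2^j\eta_1^2/\eta_2^2)\varphi(\eta_2)e^{2\pi i k2^j\eta_1^2/(L\eta_2^2)}$. Rescale $\eta_1$ by $2^{j/2}$; this preserves kernel $L^1$ norms and axis-parallel rectangles. The factor becomes a fixed bump times $e^{2\pi i k\phi(\eta)/L}$ with $\phi=\eta_1^2/\eta_2^2$. Here $\nabla\phi$ is injective and $\det\nabla^2\phi=-4\eta_1^2/\eta_2^6\neq0$, so by stationary phase the kernel has size about $|k|^{-1}$ on a set of area about $|k|^2$. Its $L^1$ norm is therefore about $|k|$.

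\emph{The $\xi$-factor.} Its leading part is $|k|^{-1-\lambda}e^{-2\pi i k2^ju(\xi)/L}\psi(2^ju(\xi))\varphi(\xi_2)$, with $u=1-\xi_1^2/\xi_2^2$. This is not a bump with uniform constants: it oscillates about $|k|$ times across the strip. Its kernel spreads over a rectangle of size roughly $2^j|k|\times|k|$, with $L^1$ norm about $|k|^{-\lambda}$.

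\emph{Why this is fatal.} A pointwise bound $|Th|\le C\,\mathcal Mh$ by an averaging maximal operator forces $C\ge\|K_T\|_{L^1}$; test at $x=0$ with $h=\overline{\operatorname{sgn}K_T(-\cdot)}$. So the sum of your termwise constants is at least
\[
2^{-j\lambda}\sum_{k\neq 0}|k|^{-\lambda}\,|k| \;=\; 2^{-j\lambda}\sum_{k\neq0}|k|^{1-\lambda}.
\]
In your notation $N=2$, and the series diverges for $\lambda\le 2$. The scheme proves the proposition only for $\lambda>2$, where it gives decay $2^{-j\lambda}$.

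\emph{The loss is in $k$, not $j$.} Your expected $2^{j}$ loss from ``thin support'' is misplaced. The $j$-localized bumps $\psi(2^ju(\xi))\varphi(\xi_2)$ and $\chi_0(2^j\eta_1^2/\eta_2^2)\varphi(\eta_2)$ have kernels of $O(1)$ $L^1$ norm (compare \eqref{fj}). The whole loss comes from the oscillatory modulations $e^{2\pi i k\tau}$ that any Fourier-mode separation of the kink $(r-\tau)_+^\lambda$ introduces, combined with summing in absolute value. Your fallback, direct integration by parts on the $\R^4$ kernel, is not developed. It would also still need a \emph{product} domination of a kernel that is not a tensor.

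\textbf{The missing idea is a separation of variables with no modulations.} The paper uses the Stein--Weiss identity \eqref{Stein-Weiss inequality} with $\lambda=\mu+\nu$, $\mu>1$, $\nu>0$. This gives
\[
\mathcal C_j^\lambda(f,g)=c_{\mu,\nu}\int_0^{\sqrt{2^{1-j}}}T_t^\nu g\; B^\mu_{j,t}f\;t^{2\nu+1}\,dt,
\]
a continuous superposition of products of two positive-order cone multipliers.
\begin{itemize}
\item By Propositions \ref{Tnu} and \ref{Bmu}, these factors are dominated pointwise by $\mathfrak m_t g$ and by $(1-t^2)^{\mu-1}\mathfrak m_{\sqrt{1-t^2}}\circ\mathfrak m_1 f$, uniformly in $t$.
\item Minkowski and H\"older then give the bound $2^{-j(\nu+1)}$. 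The decay comes from the short $t$-interval and the weight $t^{2\nu+1}$, not from a $2^{-j(\lambda-1)}$ count.
\item The hypothesis $\lambda>1$ enters only through $\mu-1>0$ and $\nu>0$. Both are needed to sum the dyadic singular pieces in Proposition \ref{Tnu}.
\end{itemize}
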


\begin{proof}[Proof of Theorem \ref{main theorem}]
    The proof of Theorem \ref{main theorem} (i), (ii), and (iii) is immediate after summing in $j$ the estimates obtained in Proposition \ref{T_j estimate}. Theorem \ref{main theorem} (iv) follows from linear interpolation between the estimates in Theorem \ref{main theorem}  (i), (ii), and in Proposition \ref{T_j estimate 2}. Indeed, the scheme is as follows:  we interpolate the estimates for  $(p_1,p_2)$ close to $(\infty,1)$, in Proposition \ref{T_j estimate 2} (after summation in $j$), with the estimates for $(p_1,2)$, $(2\le p_1\le 4)$ in Theorem~\ref{main theorem} (i), and with the estimates for $(p_1,2)$, $(4\le p_1 \le
    \infty)$ in Theorem~\ref{main theorem} (ii) (in the latter case, $p_+=p_1$). Analogous interpolation, with the roles of $p_1$ and $p_2$ reversed, yields the boundedness for points in the region symmetric with respect to the diagonal $p_1=p_2$.
\end{proof}

As we have seen above, the proof of Theorem \ref{main theorem} reduces to proving Propositions \ref{T_j estimate} and \ref{T_j estimate 2}; hence, the rest of the paper is devoted to the proof of these propositions. 
We first consider Proposition \ref{T_j estimate} for the case $j \ge 2$ and Proposition \ref{T_j estimate 2} for the case $j \ge 1$. The case $j=1$ of Proposition \ref{T_j estimate} will be dealt with separately in Section \ref{proofofj=1}. For the sake of simplicity, we can unify the notation for the cases $j=1$ and $j \ge 2$. Choosing $\widetilde{\psi}_1 \in C_c^{\infty}([1/2,2])$ appropriately such that $\psi_1(t) = \widetilde{\psi}_1(2(1-t))$, we have $$m^{\lambda}_{1}(\xi,\eta) = \widetilde{\psi}_1\Big(2\big(1-\frac{\xi_1^2}{\xi^2_2}\big)\Big)m^{\lambda}(\xi,\eta).$$
Then, by abuse of notation, we write $\widetilde{\psi}_1$ as $\psi$. Writing like this will allow us to manipulate $\mathcal C^{\lambda}_{j}$ for all $j \ge 1$ simultaneously in several parts of the manuscript.

 We make a further manipulation of $\mathcal C^{\lambda}_{j}$ which allows us to express the multiplier $m^{\lambda}_{j}$ as an integral involving two further operators.
Recall the identity, see \cite[p.~278]{SW}:
\begin{align} {\label{Stein-Weiss inequality}}
\big(R^2-|m|^2\big)^{\lambda}=c_{\mu, \nu} \int^{R}_{|m|}(R^2-t^2)^{\mu-1} t^{2\nu+1} \Big(1-\frac{|m|^2}{t^2}\Big)^{\nu}~dt,
	\end{align}
where $\lambda>0$, $\lambda=\mu+\nu$ with $\mu>0$ and $\nu>-1$, and $c_{\mu, \nu}=\frac{2\Gamma(\mu+\nu+1)}{\Gamma(\nu+1)\Gamma(\mu)}$. 
We apply \eqref{Stein-Weiss inequality} with 
$$
R^2=1-\frac{\xi_1^2}{\xi_2^2}\quad \text{and} \quad |m|^2=\frac{ \eta_1 ^{2}}{\eta_2^2} 
$$
and, in view of \eqref{eq:mlambda}, \eqref{eq:decomp} and \eqref{eq:mlambdaj}, this yields
\begin{align*}
m^{\lambda}_{j}(\xi,\eta )
	&=c_{\mu, \nu} \varphi(\xi_2)\varphi(\eta_2)\psi\Big(2^{j}\big(1-\frac{\xi_1^2}{\xi^2_2}\big)\Big)\int^{\sqrt{2^{1-j}}}_{0}\Big( 1-\frac{\xi_1^2}{\xi^2_2}-t^2\Big)^{\mu-1}_{+} \Big(1-\frac{\eta_1^2}{t^2\eta^2_2}\Big)^{\nu}_{+}~t^{2\nu+1}dt,
\end{align*}
where $\lambda=\mu+\nu$ with $\mu>0$, $\nu>-1$. Such a representation motivates the introduction of the following operators. For $j\ge 1$ and $t>0$, define 
	\begin{equation}
    \label{eq:Toperator}
	T^{\nu}_{t}g(x)=\int_{\mathbb{R}^{2}}\Big(1-\frac{\eta_1^2}{t^2\eta^2_2}\Big)^{\nu}_{+}\varphi(\eta_{2}) \widehat{g}(\eta) e^{2\pi i x\cdot\eta} d\eta
\end{equation}
and 
\begin{equation}
\label{eq:Bjt}
	B^{\mu}_{j,t}f(x)=\int_{\mathbb{R}^2}\psi\Big(2^{j}\big(1-\frac{\xi_1^2}{\xi^2_2}\big)\Big)\varphi(\xi_2)\Big(1-\frac{\xi_1^2}{\xi^2_2}-t^2\Big)^{\mu-1}_{+}\widehat{f}(\xi) e^{2\pi i x\cdot\xi} d\xi.
	\end{equation}
Observe that the operator $T^{\nu}_{t}$ does not depend on $j$.
With this notation, the multiplier $m_j^{\lambda}$ is expressed, for $j\ge1$, as a superposition (in $t$) of products of linear cone-type multipliers, namely
\begin{equation}
\label{eq:twoperators}
\mathcal C^{\lambda}_{j}(f,g)(x)= c_{\mu,\nu} \int^{\sqrt{2^{1-j}}}_{0} T^{\nu}_{t}g(x)t^{2\nu+1}B^{\mu}_{j,t}f(x)\,dt.
\end{equation}

The proof of Proposition~\ref{T_j estimate 2} relies on a domination criterion via strong maximal function. On the other hand, the proof of Proposition~\ref{T_j estimate} is deeper and we distinguish the cases $j=1$ and $j\ge2$. For the latter, we first reduce the analysis of $\mathcal{C}^{\lambda}_j$ to prove suitable square function estimates. Such estimates will be obtained by making use of different ingredients: $L^4$ estimates require a more refined analysis involving directional square functions and almost orthogonality estimates; $L^p$ estimates with $p>4$ will be treated by establishing suitable kernel estimates for the underlying operators. The domination by strong maximal functions will also play a relevant role in both cases. Finally, the case $j=1$ in Proposition~\ref{T_j estimate} requires another decomposition of the multiplier.

\section{Strong maximal function domination of linear cone-type multipliers}\label{sec:dominationbystrongmax}

In this section we will provide pointwise domination results by strong maximal functions for the operators $T^{\nu}_t$ and  $B^{\mu}_{j,t}$. These will be used in the proof of Proposition \ref{T_j estimate 2}, which is contained in this section, and they will play a crucial role in the proof of Proposition \ref{T_j estimate}.
For $t\ge0$, let us define  $\mathfrak{m}_tf(x)$ to be the analogue of the strong maximal function given by 
	\begin{equation}{\label{directional maximal function m_t}}
		\mathfrak{m}_tf(x):= \sup_{x \in R}\frac{1}{|R|}\int_{R}|f(y)| \, dy,
\end{equation}
where the supremum is taken over all rectangles whose longest side is in the direction $(1,t)$ or $(1,-t)$. Observe that 
$\mathfrak{m}_0=:\mathfrak{m}$ is exactly the usual strong maximal function. Concerning the operator $T^{\nu}_{t}$, we have the following pointwise domination.

\begin{proposition}\label{Tnu}
	Let $\nu>0$ and $0<t \le 1$. Then,  
	$$|T^{\nu}_tg(x) |\lesssim \mathfrak{m}_tg(x),$$
	where the implicit constant is independent of $t$. 
\end{proposition}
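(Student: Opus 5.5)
The plan is to express $T^\nu_t$ as convolution with a kernel, split that kernel dyadically according to the distance to the singular edges of the multiplier, and show that each dyadic piece is an $L^1$-normalized bump adapted to a rectangle elongated in the direction $(1,t)$ or $(1,-t)$. Each piece is then pointwise dominated by $\mathfrak m_t g$, and the pieces can be summed thanks to a factor $2^{-l\nu}$ with $\nu>0$.

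\emph{Step 1: dyadic decomposition of the multiplier.} Write the multiplier as $M(\eta_1/(t\eta_2))\varphi(\eta_2)$ with $M(s)=(1-s^2)_+^\nu$. Decompose
\[
M(s)=M_0(s)+\sum_{l\ge1}2^{-l\nu}\big(\chi_l^+(s)+\chi_l^-(s)\big),
\]
where $M_0$ is smooth and supported in $|s|\le 3/4$. The functions $\chi_l^\pm$ are supported where $\pm s\in[1-2^{1-l},1-2^{-l-1}]$ and satisfy $|\partial_s^k\chi_l^\pm|\lesssim_k 2^{lk}$. This is the same partition as \eqref{eq:partition}, with the homogeneity $(1-s^2)^\nu\approx 2^{-l\nu}$ extracted on each piece. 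Since $\nu>0$, it suffices to prove, uniformly in $l$ and $t$,
\[
|K_{l,t}^\pm * g(x)|\lesssim \mathfrak m_t g(x),\qquad \widehat{K_{l,t}^\pm}(\eta)=\chi_l^\pm\Big(\frac{\eta_1}{t\eta_2}\Big)\varphi(\eta_2),
\]
together with the analogous, easier bound for the $M_0$ piece.

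\emph{Step 2: kernel estimate (the main step).} Consider the piece $\chi_l^+$. On the support of $\varphi(\eta_2)$ we have $\eta_2\approx1$, so the frequency support is a thin parallelogram. Its length is $\approx1$ along the line $\eta_1=t\eta_2$, that is, along $(t,1)$, and its width is $\approx t2^{-l}$ in the transversal direction. Introduce linear coordinates adapted to it: $u=\eta_2$ and $v=(\eta_1-t\eta_2)/(t2^{-l})$. In these coordinates the symbol is $\varphi(u)\,\chi_l^+\big(1+2^{-l}v/u\big)$, which is smooth with all derivatives in $(u,v)$ bounded uniformly in $l$ and $t$. Here we use $u\approx1$ and the bound $2^{lk}$ on derivatives of $\chi_l^+$, each of which is compensated by a factor $2^{-l}$ from the chain rule. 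Integrating by parts in $(u,v)$ and undoing the change of variables then gives
\[
|K^+_{l,t}(x)|\lesssim_N t2^{-l}\big(1+|x_2+tx_1|\big)^{-N}\big(1+t2^{-l}|x_1|\big)^{-N}.
\]
This is an $L^1$-normalized bump on a parallelogram of width $\approx1$ and length $\approx 2^l/t$, elongated along the direction $(1,-t)$, which is orthogonal to $(t,1)$. The piece $\chi_l^-$ is handled in the same way, giving the direction $(1,t)$. The piece $M_0$ gives a bump of dimensions $\approx 1\times t^{-1}$ in the $x_2$ and $x_1$ directions, which is essentially axis-parallel. I expect this step to be the main obstacle: the difficulty is ensuring the constants are uniform as $t\to0$, and verifying that parallelograms with slope $t\le1$ are comparable to rectangles whose long side points in the direction $(1,\pm t)$.

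\emph{Step 3: domination by $\mathfrak m_t$.} Decompose the bump dyadically into concentric dilates $2^kP$ of the parallelogram $P$, on which the decay factor is $\lesssim 2^{-kN}$. Each $2^kP$ is contained in a rectangle of comparable area whose long side points in the direction $(1,\mp t)$. Hence
\[
|K^\pm_{l,t}*g(x)|\lesssim\sum_k 2^{-kN}2^{2k}\,\frac{1}{|2^kP|}\int_{2^kP}|g|\lesssim \mathfrak m_tg(x).
\]
For the $M_0$ piece I would compare its axis-parallel box with a rectangle in direction $(1,t)$ of comparable size; this is possible since $t\le1$, or alternatively one can absorb this piece directly, because its box is adapted to a slope-$t$ parallelogram as well. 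Summing over $l$ with the weights $2^{-l\nu}$ gives the bound $|T^\nu_t g(x)|\lesssim \mathfrak m_t g(x)$ with a constant independent of $t$.
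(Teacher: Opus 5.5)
Your proposal is correct and follows essentially the same route as the paper. Both arguments use the dyadic partition in $1-\eta_1^2/(t^2\eta_2^2)$ with the factor $2^{-\ell\nu}$ pulled out, split into the halves $\eta_1\gtrless 0$, and prove the kernel bound $t2^{-\ell}(1+|ty_1+y_2|)^{-N}(1+t2^{-\ell}|y_1|)^{-N}$; the domination by averages over rectangles along $(1,\mp t)$ is then summed using $\nu>0$. Your adapted coordinates $(u,v)$ and concentric-dilate covering are a streamlined version of the paper's separate integrations by parts in $\eta_1,\eta_2$ and its finer annulus/strip decomposition $A_{i,k}$. In particular, the symbol in the $(u,v)$ coordinates no longer depends on $t$, so the uniformity in $t$ you flagged as the main obstacle is automatic.
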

On the other hand, we have a pointwise estimate for $B^{\mu}_{j,t}$.
\begin{proposition}\label{Bmu}
	Let $j \geq 1$ and $\mu>1$. Then 
	$$|B^{\mu}_{j,t}f(x)|\lesssim (1-t^2)^{\mu-1} \mathfrak{m}_{\sqrt{1-t^2}}\circ \mathfrak{m}_1f(x),$$
	where the implicit constant is independent of $j$ and $t$. 
\end{proposition}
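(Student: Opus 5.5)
Write $a:=\sqrt{1-t^2}\in[0,1]$ and $s:=\xi_1/\xi_2$. The plan is to exploit that the multiplier of $B^{\mu}_{j,t}$ is homogeneous of degree zero in $\xi$, apart from the factor $\varphi(\xi_2)$:
\[
\psi\big(2^j(1-s^2)\big)\varphi(\xi_2)\,(a^2-s^2)_+^{\mu-1}=:h_{j,a}(s)\,\varphi(\xi_2).
\]
This lets us write the kernel as a one-dimensional superposition. Substituting $\xi_1=s\xi_2$ gives
\[
K_{j,t}(x)=\int_{\R}\varphi(\xi_2)\,\xi_2\,\check h_{j,a}(\xi_2x_1)\,e^{2\pi i \xi_2 x_2}\,d\xi_2 .
\]
The task is then to show that $|K_{j,t}|$ is dominated, uniformly in $j$ and $t$, by $a^{2(\mu-1)}$ times a summable superposition of $L^1$-normalized indicators of parallelograms. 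These parallelograms have long side in direction $(1,\pm a)$ or $(1,\pm1)$. Such a bound gives $|B^\mu_{j,t}f|\lesssim a^{2(\mu-1)}\,\mathfrak m_a\circ\mathfrak m_1 f$, where the composition is used to pass from parallelograms to genuine rectangles.

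\textbf{Step 1: asymptotics of $\check h_{j,a}$.} The function $h_{j,a}$ is supported in $\{|s|\le a\}$. It is smooth except at the endpoints $s=\pm a$, where it behaves like $(a\mp s)_+^{\mu-1}$ times a smooth factor. I will split $h_{j,a}$ with a smooth partition into an interior part and two endpoint parts.
\begin{itemize}
\item For the interior part, integration by parts gives rapid decay of its Fourier transform, on the scale dictated by the cutoff $\psi(2^j(1-s^2))$.
\item For each endpoint part, the standard asymptotics of the Fourier transform of $(a\mp s)_+^{\mu-1}\chi$ give $|\check h(y)|\lesssim a^{\mu-1}\min(1,|y|^{-\mu})$ modulo the oscillating factor $e^{\pm 2\pi i a y}$, plus a more rapidly decaying remainder. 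The factor $a^{\mu-1}$ comes from $a+|s|\approx a$ at the endpoint; combined with the size of the region this produces the total weight $(1-t^2)^{\mu-1}$.
\end{itemize}

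\textbf{Step 2: integrate in $\xi_2$.} Insert the endpoint expansion $\check h(\xi_2x_1)\approx e^{\pm2\pi i a\xi_2x_1}\,\Phi(\xi_2x_1)$, with $\Phi$ smooth and $|\partial^k\Phi(y)|\lesssim|y|^{-\mu}$ for $|y|\gtrsim1$. Integrating by parts in $\xi_2$, using the smoothness of $\varphi$, yields
\[
|K(x)|\lesssim a^{2(\mu-1)}(1+|x_1|)^{-\mu}\big(1+|x_2\pm a x_1|\big)^{-N}.
\]
Decompose this dyadically in $|x_1|\approx 2^k$. The $k$-th piece is bounded by $2^{-k(\mu-1)}$ times the normalized indicator of a $2^k\times1$ parallelogram along the line $x_2=\mp a x_1$. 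Since $\mu>1$, the series in $k$ converges; this is exactly where the hypothesis $\mu>1$ enters.

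\textbf{Step 3: the interior part.} For the interior (smooth) part, the cutoff $\psi(2^j(1-s^2))$ restricts to $|s|$ near $1$. The resulting kernel is concentrated along lines of slopes near $\pm1$. It is controlled by averages over $2^j\times1$ parallelograms along $(1,\pm1)$, whose normalized averages are bounded by $\mathfrak m_1$ uniformly in $j$. Finally, each parallelogram average in direction $(1,\pm a)$ is controlled by $\mathfrak m_a\circ\mathfrak m_1$, since it can be covered by rectangles.

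\textbf{Main obstacle.} I expect the main difficulty to be the uniformity in $j$ in Steps 1 and 3. As $j$ grows the cutoff $\psi(2^j(1-s^2))$ becomes rough, and when $a^2$ is comparable to $1-2^{-j}$ it interacts with the endpoint singularity at $s=\pm a$. I would handle this by rescaling $s$ near $\pm1$ by $2^{-j}$, treating the cutoff and the factor $(a^2-s^2)^{\mu-1}$ together as a single function of $2^j(a-|s|)$ on the rescaled scale, and checking that every constant depends only on $\mu$, $\psi$ and $\varphi$.
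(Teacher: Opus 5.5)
Your route is genuinely different from the paper's: you bound the kernel of the full product multiplier $h_{j,a}(\xi_1/\xi_2)\varphi(\xi_2)$ directly. As written, however, it fails for $j=1$. There $t$ ranges over $[0,1]$, so $a=\sqrt{1-t^2}$ can be arbitrarily small and the endpoints $\pm a$ merge. Your Steps 1--3 work at unit scale in $y=\xi_2x_1$, but the relevant scale is $1/a$.

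To see this, take $a\le 2^{-1/2}$. The $j=1$ cutoff $\psi_1(s^2)$ equals $1$ on $|s|\le a$, so $h_{1,a}(s)=(a^2-s^2)_+^{\mu-1}$ and $\check h_{1,a}(y)=a^{2\mu-1}\int_{-1}^{1}(1-u^2)^{\mu-1}e^{2\pi i a u y}\,du$. For $\varphi\ge0$ this gives $|K(x)|\gtrsim a^{2\mu-1}$ on $\{|x_1|\le c/a,\ x_2=0\}$ for a small absolute $c$. Your displayed bound $a^{2(\mu-1)}(1+|x_1|)^{-\mu}(1+|x_2\pm ax_1|)^{-N}$ is of order $a^{3\mu-2}$ at $|x_1|=c/a$. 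That is smaller by a factor $\approx a^{\mu-1}\to0$, so the display is false.

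Your Step 1 bound $a^{\mu-1}\min(1,|y|^{-\mu})$ is true. Fed into Step 2 at unit scale, though, it only yields $a^{\mu-1}\mathfrak{m}_a f$, not the factor $(1-t^2)^{\mu-1}=a^{2(\mu-1)}$ in the statement.

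Step 3 also breaks here. For $j=1$ the cutoff does not localize $|s|$ near $1$. The interior piece then has a kernel of size $a^{2\mu-1}$ spread over a horizontal $(1/a)\times 1$ box. Averages over that box are controlled by $\mathfrak{m}_a$, but not by $\mathfrak{m}_1$ uniformly in $a$.

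The repair is to substitute $s=au$ before splitting into endpoint and interior parts. This gives $|K(x)|\lesssim a^{2\mu-1}(1+a|x_1|)^{-\mu}\sum_{\pm}(1+|x_2\pm ax_1|)^{-N}$. Decomposing in $a|x_1|\approx 2^k$ then produces $a^{2(\mu-1)}\sum_k 2^{-k(\mu-1)}$ times averages over $(2^k/a)\times 1$ rectangles along $(1,\mp a)$, that is, $a^{2(\mu-1)}\mathfrak{m}_a f$.

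For $j\ge 2$ (so $a\ge 2^{-1/2}$), the rescaling you propose does work, and it makes the interior/endpoint split unnecessary. Setting $s=1-2^{-j}v$ on the branch $s>0$ gives $h_{j,a}=2^{-j(\mu-1)}\chi(v)(v-v_0)_+^{\mu-1}$. Here $\chi$ lies in a bounded subset of $C_c^\infty$ and $1-2^{-j}v_0=a$. Hence the positive-branch part of $\check h_{j,a}(y)$ equals $2^{-j\mu}e^{2\pi i a y}A(2^{-j}y)$, with $A$ a symbol of order $-\mu$ uniformly in $j$ and $t$. It follows that $|K(x)|\lesssim 2^{-j\mu}(1+2^{-j}|x_1|)^{-\mu}\sum_{\pm}(1+|x_2\pm ax_1|)^{-N}$ and $|B^{\mu}_{j,t}f|\lesssim 2^{-j(\mu-1)}\mathfrak{m}_a f$, which is stronger than what is claimed.

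The paper avoids all of this by factoring the multiplier: $B^{\mu}_{j,t}f=(1-t^2)^{\mu-1}T^{\mu-1}_{\sqrt{1-t^2}}f_{1,j}$, with $\widehat{f_{1,j}}(\xi)=\widetilde{\varphi}(\xi_2)\psi\big(2^j(1-\xi_1^2/\xi_2^2)\big)\widehat{f}(\xi)$. Proposition~\ref{Tnu} with $\nu=\mu-1>0$ handles the first factor uniformly in the aperture, because the $1/a$ scale is built into its kernel bound. The single-$\delta$ kernel estimate with $\delta=2^{-j}$ and aperture $1$ gives $|f_{1,j}|\lesssim\mathfrak{m}_1 f$.

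This factorization is where the composition $\mathfrak{m}_{\sqrt{1-t^2}}\circ\mathfrak{m}_1$ comes from. It is not needed to pass from parallelograms to rectangles: for $0\le a\le 1$ the sets $\{|x_1|\lesssim L,\ |x_2\pm ax_1|\lesssim 1\}$ already lie in rectangles of comparable area with long side along $(1,\mp a)$. In short, the direct route, done at the right scales, buys a sharper estimate. The factorization buys brevity and inherits all the uniformity from Proposition~\ref{Tnu}.
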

Propositions \ref{Tnu} and \ref{Bmu} will be enough to prove Proposition \ref{T_j estimate 2}. Nevertheless, for the proof of Proposition \ref{T_j estimate}, we need a finer result.
If $t\in [0,\sqrt{2^{-2-j}}]$ and $\mu>0$, then the multiplier of  $B^{\mu}_{j,t}$ is away from the singularity. In this case we have a stronger estimate. 
\begin{proposition}\label{operatorB}
	Let $j\geq 2$ and  $\mu>0$. Then for $t\in [0,\sqrt{2^{-2-j}}]$,
	$$|B^{\mu}_{j,t}f(x)|\lesssim 2^{(1-\mu)j} \mathfrak{m}_1f(x),$$ where the implicit constant is independent of $j$ and $t$. 
\end{proposition}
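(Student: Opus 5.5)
We will show that for $t\in[0,\sqrt{2^{-2-j}}]$ the multiplier of $B^{\mu}_{j,t}$ is a smooth bump adapted to a pair of thin sectors. Its kernel is then an $L^1$-normalized bump adapted to rectangles whose long side points in the directions $(1,\pm1)$, and this gives domination by $\mathfrak m_1$.

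\emph{Step 1: the multiplier is smooth.} Write $s=1-\xi_1^2/\xi_2^2$. On the support of $\psi(2^j s)$ we have $s\in[2^{-1-j},2^{1-j}]$, while $t^2\le 2^{-2-j}$. Hence $s-t^2\ge 2^{-2-j}$, so $(s-t^2)_+^{\mu-1}$ is never singular there. Set
\[
a_{j,t}(s)=\psi(2^js)(s-t^2)^{\mu-1}.
\]
We rescale by $u=2^js$ and $\tau=2^jt^2\in[0,1/4]$, which gives
\[
a_{j,t}(s)=2^{-j(\mu-1)}\,\psi(u)(u-\tau)^{\mu-1}.
\]
The function $\Psi_\tau(u)=\psi(u)(u-\tau)^{\mu-1}$ is supported in $[1/2,2]$, and there $u-\tau\ge 1/4$. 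So its $C^N$ norms are bounded uniformly in $\tau\in[0,1/4]$. Consequently
\[
a_{j,t}(s)=2^{(1-\mu)j}\,\Psi_\tau(2^js),
\]
with $\Psi_\tau$ ranging over a bounded family in $C_c^\infty([1/2,2])$. This produces the factor $2^{(1-\mu)j}$, and it remains to show that the operator with multiplier $\Psi_\tau(2^j(1-\xi_1^2/\xi_2^2))\varphi(\xi_2)$ satisfies $|\,\cdot\,f|\lesssim\mathfrak m_1 f$ uniformly in $j$ and $\tau$.

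\emph{Step 2: geometry of the support.} Since $1-\xi_1^2/\xi_2^2=(1-\xi_1/\xi_2)(1+\xi_1/\xi_2)$, the condition $s\approx 2^{-j}$ with $\xi_2\approx1$ (and $j\ge2$) forces $\xi_1/\xi_2$ to lie within $\approx 2^{-j}$ of $1$ or of $-1$. We split the multiplier smoothly into these two pieces. On the piece near $\xi_1=\xi_2$ we introduce the coordinates $\zeta_1=\xi_2-\xi_1$, $\zeta_2=\xi_2$. In these coordinates the piece becomes
\[
\Phi\big(2^j\zeta_1/\zeta_2,\ \zeta_2\big)\,\varphi(\zeta_2),
\]
where $\Phi$ is smooth and the dependence on $\xi_1/\xi_2$ through $1+\xi_1/\xi_2\approx 2$ is absorbed into it. This function is supported in a box of size $2^{-j}\times 1$. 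Moreover it obeys the anisotropic estimates
\[
\big|\partial_{\zeta_1}^{a}\partial_{\zeta_2}^{b}\big|\lesssim 2^{ja},
\]
uniformly in $\tau$. The derivatives in $\zeta_2$ hitting $2^j\zeta_1/\zeta_2$ produce $2^j\zeta_1/\zeta_2^2=O(1)$, so they cost nothing.

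\emph{Step 3: kernel bound.} Integrating by parts gives a kernel $K$ with
\[
|K(x)|\lesssim 2^{-j}\big(1+2^{-j}|y_1|\big)^{-N}\big(1+|y_2|\big)^{-N},
\]
where $(y_1,y_2)$ are the coordinates of $x$ dual to $(\zeta_1,\zeta_2)$. Undoing the linear change of variables, $K$ is bounded by an $L^1$-normalized sum of characteristic functions of rectangles with dimensions $\approx 2^{j}\times 1$, each with its long side along the direction $(1,1)$ or $(1,-1)$ (one direction for each piece). The standard dyadic-shell decomposition then yields $|K*f|\lesssim \mathfrak m_1 f$, and likewise for the piece near $\xi_1=-\xi_2$.

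The main obstacle is Step 2. There one has to verify that the derivative bounds hold uniformly after the shear and that the correct long direction is $(1,\pm1)$, so that the rectangles match the definition of $\mathfrak m_1$ with the same convention as in Propositions \ref{Tnu} and \ref{Bmu}. Once the anisotropic symbol estimates are checked, the remaining steps are routine.
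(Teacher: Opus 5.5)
Your argument is correct, and it reaches the estimate by a genuinely different and shorter route than the paper.

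\textbf{The paper's route.} The paper stays in the coordinates $(u,\xi_2)$ with $u=\xi_1/\xi_2$ and proves two one-sided kernel bounds. The first is \eqref{A1}: after integrating in $\xi_2$, one takes $\sup_{u\in V_j}(1+|y_1u+y_2|)^{-N}$ over an interval of length $\approx 2^{-j}$. This smears the expected tube into a cone of aperture $\approx 2^{-j}$ and gives no decay along it. The second is \eqref{A2}, which comes from three integrations by parts in $u$. The paper then interpolates these in $\theta$ as in \eqref{finalkernelA}. The domination by $\mathfrak{m}_1$ requires a case-by-case decomposition of the quadrant: strips $|y_1-y_2|\sim 2^{\ell}$ inside $B(0,2^j)$, dyadic shells split into off-diagonal and diagonal parts, and a further split of the diagonal part according to whether $\ell$ is comparable to $k$, with different choices of $\theta$.

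\textbf{Your route.} You instead isolate the factor $2^{(1-\mu)j}$ exactly by rescaling, which is just the size of the multiplier. This leaves a profile $\Psi_\tau$ that is uniformly bounded in $C_c^\infty([1/2,2])$. You then use the shear $\zeta_1=\xi_2-\xi_1$, which linearizes the phase and aligns the thin direction of the support with a coordinate axis at the same time. Integrating by parts in each variable separately gives the product bound $2^{-j}(1+2^{-j}|x_1|)^{-N}(1+|x_1+x_2|)^{-N}$. This is an $L^1$-normalized bump on a $2^j\times 1$ tube along $(1,-1)$, and along $(1,1)$ for the other piece, from which domination by $\mathfrak{m}_1$ follows directly.

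In fact, after your Step 1 the reduced multiplier $\Psi_\tau(2^j(1-\xi_1^2/\xi_2^2))\varphi(\xi_2)$ is exactly of the form treated in \eqref{Tdelta} with $t=1$ and $\delta=2^{-j}$ (equivalently in \eqref{fj}). That argument only uses that the profile is a smooth bump supported in $[1/2,2]$. So the paper's own earlier machinery would already close the proof, and the elaborate decomposition is avoidable.

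\textbf{Two details to state precisely in the write-up.}
\begin{enumerate}
\item On the piece near $\xi_1=\xi_2$ the function is $\Phi(w)=\Psi_\tau(w(2-2^{-j}w))$. It depends on $j$, but its bounds are uniform in $j$.
\item The kernel is not literally a sum over $2^j\times 1$ rectangles. It is a rapidly decaying combination of normalized indicators of the sheared boxes $\{|x_1|\le 2^{j+k_1},\ |x_1+x_2|\le 2^{k_2}\}$, $k_1,k_2\ge 0$. Each of these lies in a rectangle with sides along $(1,\pm1)$ whose area is at most $\lesssim(1+2^{k_2-j-k_1})$ times its own, and this loss is absorbed by the decay $2^{-Nk_2}$.
\end{enumerate}
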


\subsection{Proof of Proposition \ref{T_j estimate 2}}

Fix $j\ge1$. Given $\lambda >1$, we consider $\mu>1$ and $\nu>0$ with $\lambda=\mu+\nu$. Applying Proposition {\ref{Tnu}} and Proposition {\ref{Bmu}}, we have  
$$ \left|\mathcal{C}_j^{\lambda}(f,g)(x)\right| = \left|c_{\mu,\nu} \int_{0}^{\sqrt{2^{1-j}}}B^{\mu}_{j,t}f(x)T_t^\nu g(x) t^{2 \nu+1}  \, dt \right|\lesssim \int_{0}^{\sqrt{2^{1-j}}}(1-t^2)^{\mu-1}\mathfrak{m}_{\sqrt{1-t^2}} \circ \mathfrak{m}_1f(x)\mathfrak{m}_t g(x) t^{2 \nu+1} \,dt.
$$ 
Consequently, in view of the boundedness of the strong maximal function, we get that  
\begin{align*}
\|\mathcal{C}_j^{\lambda}(f,g)\|_p  \lesssim \int_{0}^{\sqrt{2^{1-j}}}\|\mathfrak{m}_{\sqrt{1-t^2}} \circ \mathfrak{m}_1f \cdot \mathfrak{m}_t g\|_p t^{2\nu+1}  \, dt   &\le \int_{0}^{\sqrt{2^{1-j}}}\|\mathfrak{m}_{\sqrt{1-t^2}} \circ \mathfrak{m}_1f\|_{p_1} \|\mathfrak{m}_t g\|_{p_2} t^{2\nu+1}  \, dt \\
& \lesssim 2^{-j(\nu+1)}\|f\|_{p_1}\|g\|_{p_2}.
\end{align*}

\subsection{Proof of Proposition~\ref{Tnu}: domination of $T^{\nu}_t g$ by strong maximal function}\label{kernelestimates}

Recall the partition of unity in \eqref{eq:partition}. Then we have the following decomposition of the operator
\begin{equation}\label{partitionofunity}|{T}^{\nu}_tg(x)|\leq |T^{\nu,\psi_1}_{t}g(x)|+\sum_{\ell \ge 2}  2^{-\ell\nu}|T^{\nu,\psi}_{2^{-\ell},t}g(x)|,\quad x\in \R^2,
	\end{equation}
where
\begin{equation}
\label{Tuno}
T^{\nu,\psi_1}_{t}g(x)=\int_{\mathbb{R}\times [1/2,2]} \varphi(\eta_2)\psi_1\Big(\frac{\eta^2_1}{t^2\eta^2_2}\Big) \Big(1-\frac{\eta_1^2}{t^2\eta_2^2}\Big)^{\nu}\widehat{g}(\eta)e^{2\pi i x \cdot \eta} d\eta
\end{equation}
and for $2^{-\ell}=\delta$ we write 
\begin{equation*}
\label{def:Tnudelta}
T^{\nu,\psi}_{\delta,t}g(x)=\int_{\mathbb{R}\times [1/2,2]} \varphi(\eta_2)\psi\Big(\delta^{-1}\big(1-\frac{\eta^2_1}{t^2\eta^2_2}\big)\Big) \Big(\delta^{-1}\big(1-\frac{\eta^2_1}{t^2\eta^2_2}\big)\Big)^{\nu}\widehat{g}(\eta)e^{2\pi i x \cdot \eta} d\eta.
\end{equation*}
Observe that $T^{\nu,\psi_1}_{t}$ and $T^{\nu,\psi}_{\delta,t}$ represent the non-singular and the singular part of $T^{\nu}_t$, respectively.

We first show that, for $\delta < \frac{1}{2}$,
\begin{equation}{\label{Tdelta}}
|T^{\nu,\psi}_{\delta,t}g(x)|\lesssim \mathfrak{m}_tg(x),  
\end{equation}
where the implicit constant is independent of $\delta$ and $t$. Note that the kernel of $T^{\nu,\psi}_{\delta,t}$ is given by 
\begin{align}
K^{\nu,\psi}_{\delta,t}(y) & = \int_{\mathbb{R}\times [1/2,2]} \varphi(\eta_2) \psi\Big(\delta^{-1}\big(1-\frac{\eta^2_1}{t^2\eta^2_2}\big)\Big) \Big(\delta^{-1}\big(1-\frac{\eta^2_1}{t^2\eta^2_2}\big)\Big)^{\nu}e^{2\pi i y\cdot \eta} d\eta \nonumber \\
& = t\int^{2}_{1/2} \eta_2\varphi(\eta_2)  e^{2\pi i\eta_2 y_2}  \int_{\mathbb{R}}\psi \big(\delta^{-1}(1-\eta^2_1)\big)  \big(\delta^{-1}(1-{\eta^2_1})\big)^{\nu}e^{2\pi i t\eta_2 y_1\eta_1}\,d\eta_1 d\eta_2. \label{a middle step in the maximal domination}  
\end{align}
We decompose the integral over $\R$ into two parts, $[0,\infty)$ and $(-\infty, 0]$, obtaining two kernels 
$$
(K^{\nu,\psi}_{\delta,t})_1(y)+(K^{\nu,\psi}_{\delta,t})_2(y)
:=t\int^{2}_{1/2} \eta_2\varphi(\eta_2)  e^{2\pi i\eta_2 y_2} \Big(\int_{0}^{\infty}+\int_{-\infty}^{0}\Big)\Psi \big(\delta^{-1}(1-\eta^2_1)\big)  e^{2\pi i t\eta_2 y_1\eta_1}\,d\eta_1 d\eta_2
$$
where $\Psi(\eta) = \psi(\eta)\eta^{\nu}$. Note that, by symmetry, $(K^{\nu,\psi}_{\delta,t})_2(y_1,y_2) = (K^{\nu,\psi}_{\delta,t})_1(-y_1,y_2)$, so we only need to consider the kernel $(K^{\nu,\psi}_{\delta,t})_1$. After a change of variable we write
\begin{equation}
\label{eq:K1change}
(K^{\nu,\psi}_{\delta,t})_1(y) = t\int^{2}_{1/2} \eta_2\varphi(\eta_2)  e^{2\pi i \eta_2( ty_1+y_2)} \Phi_{\delta^{-1}}(t,y_1,\eta_2)\,d\eta_2, 
\end{equation}
where $\Phi_{\delta^{-1}}(t,y_1,\eta_2)=\int_{-\infty}^{1}\Psi(\delta^{-1}(2\eta_1-\eta^2_1))  e^{-2\pi i t\eta_2 \eta_1 y_1} \,d\eta_1$. 

We claim that
\begin{equation}{\label{K1}}
|(K^{\nu,\psi}_{\delta,t})_1(y)|\leq C_{\nu, \psi} ~t\delta (1+ t \delta|y_1|)^{-3}\min\{ 1, |ty_1+y_2|^{-3} \}.    
\end{equation}
To prove the claim, first we observe that  $\Psi(\eta)$ is a bounded function and, in view of the support of $\psi$,
\begin{equation}\label{condition for the support}
\{\eta_1 \in (-\infty,1]: \Psi(\delta^{-1}(2\eta_1-\eta_1^2)) \neq 0\} \subset [0, 2\delta].    
\end{equation}
Using this  and integration by parts with respect to $\eta_1$, it can be checked that $|\frac{d^n}{d\eta^n_2}\Phi_{\delta^{-1}}(t,y_1,\eta_2)|\lesssim \delta (1+t \delta|y_1|)^{-N}$ for any $n \geq 0$, $N>0$, uniformly in  $\eta_2\in [1/2,2]$. Then,
$$
|(K^{\nu,\psi}_{\delta,t})_1(y)| \le t\int^{2}_{1/2} |\varphi(\eta_2)||\Phi_{\delta^{-1}}(t,y_1,\eta_2)| d\eta_2 \lesssim  t\delta (1+t \delta|y_1|)^{-3}.$$
For the second part of the claim, applying integration by parts with respect to $\eta_2$,    
\begin{align*}
|(K^{\nu,\psi}_{\delta,t})_1(y)|& = t \Big|\int^2_{1/2} \frac{d^3}{d\eta^3_2} (\varphi(\eta_2) \Phi_{\delta^{-1}}(t,y_1,\eta_2))  \frac{e^{2\pi i \eta_2 (ty_1+y_2)}}{[2\pi i (ty_1+y_2)]^3}~d\eta_2\Big|\\
&\lesssim t\delta (1+t \delta|y_1|)^{-3}|ty_1+y_2|^{-3},
\end{align*}            
hence we prove the claim.

In view of the kernel estimates above, in order to prove the desired estimate \eqref{Tdelta} it is enough to work with $y\in \mathfrak{Q}_4=\{y: y_1 \geq 0,y_2 \leq 0\}$. The case of other three quadrants can be handled similarly. We decompose the quadrant $\mathfrak{Q}_4$ as  follows
$$
\R^2\cap \mathfrak{Q}_4=\Big(\bigcup_{i \ge 1}A_i \Big) \cup A_0, 
$$
where 
$$
A_0 := B(0, (\delta t)^{-1})\cap \mathfrak{Q}_4 \quad \text{and}\quad 
A_i :=\Big(B(0,(\delta t)^{-1}2^{i})\setminus B(0,(\delta t)^{-1}2^{i-1})\Big)\cap \mathfrak{Q}_4,\quad i\geq 1.
$$
For $i \geq 0$, we further decompose $A_i$ into  $- \log(\delta t) $ almost rectangles $\{A_{i,k}\}_{k=0}^{\lfloor - \log(\delta t) \rfloor+1}$, where
\begin{align*}
A_{i,0}&:= A_i \cap \{y \in \R^2 : |ty_1 + y_2| < 2^{i-2}\},\\
A_{i,k}&:= A_i \cap \{y \in \R^2 : 2^{i+k-3} \le |ty_1 + y_2| < 2^{i+k-2}\},\quad \text{ for } 1 \le k \le \lfloor - \log(\delta t) \rfloor ,\\
A_{i,\lfloor - \log(\delta t)\rfloor+1}&:= A_i \cap \{y \in \R^2 :  (\delta t)^{-1}2^{i-2} \le |ty_1 + y_2|\}.
\end{align*}
See Figure \ref{fig:Ai}.

\begin{center}
\begin{figure}
\begin{tikzpicture}[scale=0.75]
\draw[->] (-0.2,0) --(6.5,0) node[right] {$y_1$};
\draw[->] (0,0.2) --(0,-6.5) node[below] {$y_2$};
\draw (0,-6) circle arc (270:360:6);
\filldraw (0,-6) circle (.05);
\filldraw (0,-6) node[left] {$-(\delta t)^{-1}2^{i}$};
\draw (0,-3) circle arc (270:360:3);
\filldraw (0,-3) circle (.05);
\filldraw (0,-3) node[left] {$-(\delta t)^{-1}2^{i-1}$};
\draw[dashed, domain=(2-2*sqrt(11))/5:(4-sqrt(716))/10, smooth, variable=\t] plot ({1-2*\t}, {\t});
\draw[dashed, domain=(-2-2*sqrt(11))/5:(-4-sqrt(716))/10, smooth, variable=\t] plot ({-1-2*\t}, {\t});
\node[rotate=-26.565] at (4.024926,-2.012463) {$A_{i,0}$};
\draw[dashed, domain=(8-sqrt(164))/10:(8-sqrt(704))/10, smooth, variable=\t] plot ({2-2*\t}, {\t});
\draw[dashed, domain=(-8-sqrt(164))/10:(-8-sqrt(704))/10, smooth, variable=\t] plot ({-2-2*\t}, {\t});
\node[rotate=-26.565] at (4.2799,-1.3900) {$A_{i,1}$};
\node[rotate=-26.565] at (3.6799,-2.5900) {$A_{i,1}$};
\draw[dashed, domain=(-16-sqrt(116))/10:(-16-sqrt(656))/10, smooth, variable=\t] plot ({-4-2*\t}, {\t});
\node[rotate=-26.565] at (3.2419, -3.1209) {$A_{i,2}$};
\end{tikzpicture} 
\caption{Decomposition of $A_i$}
\label{fig:Ai}
\end{figure}
\end{center}

For each $0 \le k \le \lfloor - \log(\delta t)\rfloor+1$, we can find rectangles $R_{i,k}$ satisfying $R_{i,k} \supset A_{i,k}$ and $0 \in R_{i,k}$, with longest side in the direction $(1, -t)$ and dimension $100(\delta t)^{-1}2^{i} \times 100(2^{i+k})$. 
Then for $i \ge 1$,
\begin{align}
\int_{A_i}|(K^{\nu,\psi}_{\delta,t})_1(y)g(x-y)| \, dy &\lesssim t\delta\sum\limits_{k=0}^{\lfloor  -\log(\delta t)\rfloor+1}\int_{A_{i,k}}\frac{|g(x-y)|}{(1+\delta t|y_1|)^{3}}\min\{ 1, (ty_1+y_2)^{-3}\} \, dy\notag \\
& \lesssim t \delta \Big(\int_{A_{i,0}}\frac{|g(x-y)|}{2^{3i}} \, dy + \sum\limits_{k=1}^{\lfloor -\log(\delta t)\rfloor + 1}\int_{A_{i,k}} \frac{|g(x-y)|}{ 2^{3(i+k)}} \, dy \Big)\notag \\
& \lesssim \frac{2^{-i}}{|R_{i,0}|}\int_{R_{i,0}}|g(x-y)| \, dy +\sum\limits_{k=1}^{\lfloor -\log(\delta t)\rfloor + 1}  \frac{2^{-i-2k}}{|R_{i,k}|}\int_{R_{i,k}}|g(x-y)| \, dy {\label{intermediate result}} \\
&\notag \lesssim 2^{-i}\mathfrak{m}_tg(x), 
\end{align}
where we use the fact that $|y_1| \simeq (\delta t)^{-1}2^{i}$ for $y \in A_{i,0}$ and $|ty_1 + y_2| \simeq 2^{i+k}$ for $y \in A_{i,k}$ with $k \ge 1$. Similarly, we have 
$$
\int_{A_0}|(K^{\nu,\psi}_{\delta,t})_1(y)g(x-y)| \, dy \lesssim \mathfrak{m}_tg(x),
$$
where the implicit constant is independent of $\delta$ and $t$. 
Consequently, we get \eqref{Tdelta}. 

It remains to prove
\begin{equation}
\label{estimate of inner part psi_1}
|T^{\nu,\psi_1}_{t}g(x)| \lesssim \mathfrak{m}_tg(x). 
\end{equation} 
The proof of estimate \eqref{estimate of inner part psi_1} follows a similar strategy as that of estimate \eqref{Tdelta}. The kernel $K_{t}^{\nu,\psi_1} $associated with \eqref{Tuno} is given by 
\begin{align*}
K_{t}^{\nu,\psi_1}(y)&=\int_{\mathbb{R}\times [1/2,2]}\varphi(\eta_2)\psi_1\Big(\frac{\eta^2_1}{t^2\eta^2_2}\Big) \Big(1-\frac{\eta_1^2}{t^2\eta_2^2}\Big)^{\nu}e^{2\pi i x \cdot \eta} d\eta\\
&= t\int^{2}_{1/2} \eta_2\varphi(\eta_2)  e^{2\pi i\eta_2 y_2}  \int_{\mathbb{R}}\psi_1(\eta_1^2)  (1-{\eta^2_1})^{\nu}e^{2\pi i t\eta_2 y_1\eta_1}\,d\eta_1 d\eta_2.
\end{align*}
In this case we do not decompose $K_{t}^{\nu,\psi_1}$ into two parts: if we did that, endpoint terms would artificially appear while applying integration by parts. Instead, we directly perform a change of variable  $1-\eta_1 \mapsto \eta_1$ and obtain estimate \eqref{K1} with $\delta=1$ using integration by parts. Thus, we can proceed as before and obtain the estimate \eqref{estimate of inner part psi_1}.
\qed

We remark that the assumption $\nu > 0$ in Proposition~\ref{Tnu} is used for the summation over $\ell$. For a fixed $\delta=2^{-\ell}$, we have the following result, which holds for all $\nu > -1$, and which is obtained by inspection of the proof of Proposition \ref{Tnu}, from estimate \eqref{intermediate result} and the corresponding ones for $A_0$ and the other quadrants. It will play a crucial role in the proof of the $L^4$ estimates in Section \ref{sec:L4sf}.
\begin{coro}{\label{eccentricity result}}
Suppose $\nu > -1$. There exists a sequence $\{C_i\}_{i=1}^{\infty}$ with $\|\{C_i\}\|_{\ell^{1}} = 1$, such that for each $0< \delta < \frac{1}{2}$, $0< t <1$, we can find a family of rectangles $\{R_{i}^{t}\}$ whose eccentricity is between $1$ and $(\delta t)^{-1}$ satisfying
$$
|T^{\nu,\psi}_{\delta,t}g(x)| \lesssim \sum\limits_{i \ge 1}C_i\frac{\chi_{R_i^t}}{|R_i^t|} * |g|(x),$$
where the implicit constant is independent of $\delta$ and $t$.
\end{coro}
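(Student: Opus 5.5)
The plan is to reread the proof of \eqref{Tdelta}, which never used $\nu>0$ at fixed $\delta$, and to record the sum over $i$ and $k$ as a convolution bound instead of collapsing it into $\mathfrak{m}_t$.

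First, for $\nu>-1$ the function $\Psi(\eta)=\psi(\eta)\eta^{\nu}$ is still smooth and compactly supported in $[1/2,2]$. Hence the support property \eqref{condition for the support} and the derivative bounds $|\partial_{\eta_2}^n\Phi_{\delta^{-1}}|\lesssim \delta(1+t\delta|y_1|)^{-N}$ hold as before, with constants depending only on $\nu$ and $\psi$. So the kernel bound \eqref{K1} holds uniformly in $\delta<1/2$ and $t\in(0,1)$. The same is true for $(K^{\nu,\psi}_{\delta,t})_2$, by reflection $y_1\mapsto -y_1$, and for the other quadrants.

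Second, take the explicit chain \eqref{intermediate result}. For $i\ge1$ and $0\le k\le \lfloor-\log(\delta t)\rfloor+1$ it gives
\[
\int_{A_{i,k}}|K(y)||g(x-y)|\,dy\lesssim 2^{-i-2k}\frac{\chi_{R_{i,k}}}{|R_{i,k}|}*|g|(x).
\]
Here $R_{i,k}$ is centered at (or contains) the origin, has long side in direction $(1,-t)$, and has dimensions $100(\delta t)^{-1}2^i\times 100\cdot 2^{i+k}$. Its eccentricity is $(\delta t)^{-1}2^{-k}$, which lies in $[1,(\delta t)^{-1}]$ once the top index $k$ is capped (take $\min$ with a square if needed). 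The $A_0$ piece is handled the same way: decompose $A_0$ by the level sets of $|ty_1+y_2|$ and use only the $\min\{1,|ty_1+y_2|^{-3}\}$ factor. This gives weights $2^{-2k}$ and rectangles of size $(\delta t)^{-1}\times 2^{k}$.

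Then collect all these pieces, over the four quadrants, the two half-kernels, and the indices $(i,k)$, into one sequence indexed by a single integer through any enumeration. The weights $2^{-i-2k}$ are summable independently of $\delta$ and $t$. Rescaling them to have total mass $1$ only changes the implicit constant.

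The one subtlety is that the statement asks for a fixed sequence $\{C_i\}$ independent of $\delta$ and $t$, while the number of $k$-indices, about $\log(\delta t)^{-1}$, depends on them. I would handle this by indexing by pairs $(i,k)\in\N^2$ with fixed weights $c\,2^{-i-2k}$. For $k$ beyond the actual range I would set $R^t_{i,k}$ to be a degenerate copy, for example a repeat of the last rectangle, which only enlarges the right-hand side. This gives a universal $\ell^1$ sequence, and the rectangles alone carry the dependence on $\delta$ and $t$.

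I expect no real obstacle, only bookkeeping: checking that the constants in \eqref{K1} do not depend on $\nu>0$ and that the eccentricity range is respected.
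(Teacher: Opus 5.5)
Your proposal is correct and follows the paper's own route. The paper also obtains the corollary by inspecting the proof of Proposition~\ref{Tnu}: it keeps the weighted rectangle averages in \eqref{intermediate result}, together with their analogues on $A_0$ and in the other quadrants, instead of collapsing them into $\mathfrak{m}_t$, and $\nu$ enters only through the smooth profile $\Psi$. Your bookkeeping that makes $\{C_i\}$ independent of $\delta$ and $t$ (fixed weights $c\,2^{-i-2k}$ over pairs $(i,k)$, padded with repeated rectangles) is left implicit in the paper, and it is exactly what the Minkowski step in Section~\ref{sec:L4sf} requires.
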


\subsection{Proof of Proposition~\ref{Bmu}: domination of $B^{\mu}_{j,t}f$ by strong maximal function for $\mu > 1$}

Fix $j\ge1$. The idea of this proof is a variant of the Proposition \ref{Tnu}. Let $\widetilde{\varphi}(x)$ be a smooth function supported in $[1/4,4]$ which takes value $1$ on $[1/2,2]$.   
We rewrite $B^{\mu}_{j,t}f(x)$ as follows. 
\begin{align*}
B^{\mu}_{j,t}f(x)=&\int_{\mathbb{R} \times [1/2,2]} \Big(1-t^2-\frac{\xi^2_1}{\xi^2_2}\Big)^{\mu-1}_{+}\varphi(\xi_2)  \widetilde{\varphi}(\xi_2) \psi\Big(2^j\big(1-\frac{\xi^2_1}{\xi^2_2}\big)\Big) \widehat{f}(\xi) e^{2\pi i x\cdot\xi} \,d\xi\\
=& (1-t^2)^{\mu-1} \int_{\mathbb{R} \times [1/2,2]} \Big(1-\frac{\xi^2_1}{(1-t^2)\xi^2_2}\Big)^{\mu-1}_{+} \varphi(\xi_2) \widehat{f_{1,j}}(\xi) e^{2\pi i x\cdot\xi} \,d\xi\\
=&(1-t^2)^{\mu-1} T^{\mu-1}_{\sqrt{1-t^2}}f_{1,j}(x),
\end{align*}
where $T^{\mu-1}_{\sqrt{1-t^2}}$ is the operator defined in \eqref{eq:Toperator} and
\begin{equation}{\label{fRj}}
\widehat{f_{1,j}}(\xi)= \widetilde{\varphi}(\xi_2) \psi\Big(2^j\big(1-\frac{\xi^2_1}{\xi^2_2}\big)\Big) \widehat{f}(\xi).  
\end{equation}
Invoking Proposition {\ref{Tnu}} we have, for $\mu>1$, 
$$
| B^{\mu}_{j,t}f(x)|\leq (1-t^2)^{\mu-1} |T^{\mu-1}_{\sqrt{1-t^2}}f_{1,j}(x)| \lesssim (1-t^2)^{\mu-1}|\mathfrak{m}_{\sqrt{1-t^2}}f_{1,j}(x)|.
$$
Then, to complete the proof, we need to show that     
\begin{equation}\label{fj}
|f_{1,j}(x)|\lesssim \mathfrak{m}_1f(x).
\end{equation}

We write $f_{1,j}(x)=K_{1,j} \ast f(x)$, where 
\begin{align*}
K_{1,j}(y)&=\int_{\mathbb{R}\times [1/4,4]} \widetilde{\varphi}(\xi_2) \psi\Big(2^{j}\big(1-\frac{\xi^2_1}{\xi^2_2}\big)\Big) e^{2\pi i y\cdot\xi}\,d\xi\\
&=\int^4_{1/4} \widetilde{\varphi}(\xi_2) \xi_2 e^{2\pi i \xi_2(y_2+y_1)} \int_{\mathbb{R}} \psi(2^j(2 \xi_1 - \xi_1^2)) e^{-2\pi i  \xi_2 \xi_1 y_1}\,d\xi_1 d\xi_2,
\end{align*}
so we arrive at essentially the same position as in \eqref{eq:K1change}  with $\delta = 2^{-j}$ and $t = 1$. Therefore, continuing likewise, we obtain estimate \eqref{fj}.

\subsection{Proof of Proposition {\ref{operatorB}}: domination of $B^{\mu}_{j,t}$ away from the singularity by strong maximal function for $\mu>0$}
Fix $j\ge 2$. Observe that the kernel of the operator $B^{\mu}_{j,t}$  is given by 
	\begin{align*}
	K^{\mu}_{j,t}(y) &= \int_{\mathbb{R}^2} \psi\Big(2^j\big(1 - \frac{\xi_1^2}{\xi_2^2}\big)\Big)\varphi(\xi_2)\Big(1 - \frac{\xi_1^2}{\xi_2^2} - t^2\Big)^{\mu - 1}_+ e^{2\pi i y \cdot \xi} \, d\xi\\
	&= \int_{\mathbb{R}} \int_{\mathbb{R}} \psi(2^j(1 - u^2))\, \varphi(\xi_2)\, (1 - u^2 - t^2)_+^{\mu - 1} e^{2\pi i (y_1 u + y_2)\xi_2} \xi_2\, d\xi_2\, du,
	\end{align*}
where we recall that $y=(y_1, y_2)\in \R^2$, $\psi,\varphi \in C_c^\infty([1/2, 2])$, and $t \in [0,\, \sqrt{2^{-2-j}}]$. 

	 Since \( \operatorname{supp} \psi \subset [1/2, 2] \), in the integral above, we have 
     $$
     u \in\big[\sqrt{1-2^{-j+1}}, \sqrt{1-2^{-j-1}}\big] \cup\big[-\sqrt{1-2^{-j-1}}, -\sqrt{1-2^{-j+1}}\big]=:V_j\cup (-V_j).
     $$
Let us write
     $$
     K^{\mu,1}_{j,t}(y) := \int_{V_j} \int_{\mathbb{R}} \psi(2^j(1 - u^2))\, \varphi(\xi_2)\, (1 - u^2 - t^2)_+^{\mu - 1} e^{2\pi i (y_1 u + y_2)\xi_2} \xi_2\, d\xi_2\, du.
     $$
Observe that when $t\in [0,\sqrt{2^{-2-j}}]$, we have $1-u^2-t^2 \simeq 2^{-j}$, and thus the symbol $(1-u^2-t^2)^{\mu-1}_{+}$ does not have any singularity for every $\mu > 0$. We provide two different estimates for the kernel $K^{\mu,1}_{j,t}$.

On the one hand, let $\Psi_1(\xi_2):=\xi_2\varphi(\xi_2)$, then $\Psi_1\in C_c^\infty([1/2,2])$, and we have the following decay estimate
\begin{align*}
\Big|\int_{\mathbb{R}} \Psi_1(\xi_2)\, e^{2\pi i (y_1 u + y_2)\xi_2}\, d\xi_2\Big| 
	&\leq \frac{C_N}{(1+|y_1u+y_2|)^N},\quad  N\in \N. 
	\end{align*}
	Hence,
\begin{align}
	|K^{\mu,1}_{j,t}(y)| &\lesssim \int_{\mathbb{R}} |\psi(2^j(1 - u^2))|\, (1 - u^2 - t^2)^{\mu - 1} (1 + |y_1 u + y_2|)^{-N}\, du \nonumber \\
    &\lesssim  \sup_{u\in V_j}(1 + |y_1 u + y_2|)^{-N}  \int^{{\sqrt{1-2^{-j-1}}}}_{{\sqrt{1-2^{-j+1}}}}(1-u^2-t^2)^{\mu-1} \,du \nonumber \\
    & \lesssim 2^{-\mu j} \sup_{u\in V_j}(1 + |y_1 u + y_2|)^{-N}. \label{A1}
\end{align}

On the other hand, we can write
\begin{align*}
K^{\mu,1}_{j,t}(y) = \int^2_{1/2} \varphi(\xi_2) e^{2\pi i y_2\xi_2} \int_{V_j} \psi(2^j(1-u^2)) (1-u^2-t^2)^{\mu-1} e^{2\pi i \xi_2 y_1u} \,du\, d\xi_2  
\end{align*}
Observe that, for $u \in V_j$,
$$
    \Big|\frac{d^3}{du^3}(\psi(2^j(1-u^2)) (1-u^2-t^2)^{\mu-1})\Big|\lesssim  2^{-\mu j} 2^{4 j}. 
$$
Therefore, applying integration by parts with respect to $u$, we get that 
\begin{equation}\label{A2}
    |K^{\mu,1}_{j,t}(y)|\lesssim  2^{-\mu j} 2^{3j} (1+|y_1|)^{-3}. 
\end{equation}
We can proceed analogously with 
 $$
     K^{\mu,2}_{j,t}(y) := \int_{-V_j} \int_{\mathbb{R}} \psi(2^j(1 - u^2))\, \varphi(\xi_2)\, (1 - u^2 - t^2)_+^{\mu - 1} e^{2\pi i (y_1 u + y_2)\xi_2} \xi_2\, d\xi_2\, du.
     $$
Optimizing the estimates \eqref{A1} and \eqref{A2}, and the ones that can be obtained for $ K^{\mu,2}_{j,t}(y)$, we get
\begin{equation}
    |K^{\mu}_{j,t}(y)| \lesssim 2^{-\mu j} 2^{3j(1-\theta)}  (1+|y_1|)^{-3(1-\theta)} \label{finalkernelA}  \sup_{u\in V_j }\Big[(1+|uy_1-y_2|)^{-N}+(1+|uy_1+y_2|)^{-N}\Big]^{\theta}, 
\end{equation}
for any $\theta\in [0,1]$. 

It is enough to work with the kernel $K^{\mu}_{j,t}\chi_{\mathfrak{Q}_1}=:K^{\mathfrak{Q}_1}_{j,t}$, where $\mathfrak{Q}_1=\{y\in \R^2: y_1, y_2\geq 0\}$ denotes the first quadrant in $\R^2$. Decompose the kernel 
\begin{equation}{\label{decomposition of the kernel Kmuj,t}}
K^{\mathfrak{Q}_1}_{j,t}=\sum_{k\geq1} K^{\mathfrak{Q}_1}_{j,t,k}+K^{\mathfrak{Q}_1}_{j,t,0},    
\end{equation}
where 
    $K^{\mathfrak{Q}_1}_{j,t,0}:=K^{\mathfrak{Q}_1}_{j,t}\chi_{B(0,2^{j})}$ and $K^{\mathfrak{Q}_1}_{j,t,k}:=K^{\mathfrak{Q}_1}_{j,t}\chi_{B(0,2^{j+k})\setminus B(0,2^{j+k-1})}$.
Let us first consider the term $K^{\mathfrak{Q}_1}_{j,t,0}$. We decompose 
\[
B(0,2^j)\cap \mathfrak{Q}_1\subset \Big(\bigcup^j_{\ell=1}R_{j,\ell} \Big)\cup R_{j,0},
\]
        where $R_{j,\ell}:=\{ y\in B(0,2^j) \cap \mathfrak{Q}_1: \, 2^{\ell-1}\leq |y_1-y_2|\leq 2^{\ell}\}$ and $R_{j,0}:=\{ y\in B(0,2^j) \cap \mathfrak{Q}_1: |y_1-y_2|\leq1\}$. See Figure \ref{Decomposition of K_j,t,0}.

We first claim that for $y\in R_{j,\ell}$, $\ell\geq 0$, and $u\in V_j$,
\begin{equation}{\label{Kernel1}}
1+|uy_1-y_2|\gtrsim |y_1-y_2|,    
\end{equation}
where the implicit constant is independent of $u$. In the case of $\ell = 0 $ and $\ell=1$, the above estimate is trivial. Considering the case $\ell \ge 2$, note that 
\begin{align*}
    1+|uy_1-y_2|=1+|y_1-y_2-(1-u)y_1| \ge 1 + \big||y_1-y_2|-(1-u)y_1\big|,
\end{align*}
and for every $y\in R_{j,\ell}$ and $u\in V_j$ with $j \ge 2$, we have  $(1-u)y_1 \le \frac{4}{3}$ and $|y_1-y_2|\geq 2^{\ell-1}$, so we get \eqref{Kernel1}. The above claim implies that  
        \begin{equation}{\label{easypart}}
        \sup\limits_{u \in V_j}(1 + |y_1 u- y_2|)^{-N}\lesssim (|y_1-y_2|)^{-N}.
            \end{equation}
For each $R_{j,\ell}$, $0 \le \ell \le j$, we can choose a rectangle $\widetilde{R}_{j,\ell} \supset R_{j,\ell}$ containing the origin such that its longest side is in the direction $(1,1)$ and has size $|\widetilde{R}_{j,\ell}|\lesssim 2^{j+\ell}$. Applying (\ref{finalkernelA}) with $\theta=1$,
we get that 
\begin{align*}
    |K^{\mathfrak{Q}_1}_{j,t,0}\ast f(x)| 
    &\leq \sum^{j}_{\ell=1}\int_{R_{j,\ell}} |K^{\mathfrak{Q}_1}_{j,t}(y)f(x-y)| \,dy+\int_{R_{j,0}} |K^{\mathfrak{Q}_1}_{j,t}(y)f(x-y)| ~dy \\
& \lesssim \sum\limits_{\ell=1}^{j}2^{-\mu j} 2^{-\ell N}\frac{2^{j+\ell}}{|\widetilde{R}_{j,\ell}|}\int_{\widetilde{R}_{j,\ell}}|f(x-y)| \, dy  +2^{-\mu j} \frac{2^{j}}{|\widetilde{R}_{j,0}|}\int_{\widetilde{R}_{j,0}}|f(x-y)| \, dy  \\
 & \lesssim 2^{-\mu j} 2^{j} \mathfrak{m}_1f(x). 
    \end{align*}
\begin{center}
\begin{figure}
\begin{minipage}{0.495\linewidth}
\centering
\begin{tikzpicture}[scale=0.6]
\draw[->] (-0.2,0) --(6.5,0) node[right] {$y_1$};
\draw[->] (0,-0.2) --(0,6.5) node[above] {$y_2$};
\draw (6,0) circle arc (0:90:6);
\filldraw (0,6) circle (.05);
\filldraw (0,6) node[left] {$2^j$};
\draw[dashed, domain=0:3.851, smooth, variable=\t] plot ({\t}, {\t+0.75});
\draw[dashed, domain=0.75:4.601, smooth, variable=\t] plot ({\t}, {\t-0.75});
\node[rotate=45] at (2.8,2.8) {\scriptsize{$R_{j,0}$}};
\draw[dashed, domain=0:3.4258, smooth, variable=\t] plot ({\t}, {\t+1.5});
\node[rotate=45] at (2.2092,3.3342) {\scriptsize{$R_{j,1}$}};
\draw[dashed, domain=1.5:4.9258, smooth, variable=\t] plot ({\t}, {\t-1.5});
\node[rotate=45] at (3.3342,2.2092) {\scriptsize{$R_{j,1}$}};
\draw[dashed, domain=0:2.4685, smooth, variable=\t] plot ({\t}, {\t+3});
\node[rotate=45] at (1.6,3.85) {\scriptsize{$R_{j,2}$}};
\draw[dashed, domain=3:5.4685, smooth, variable=\t] plot ({\t}, {\t-3});
\node[rotate=45] at (3.85,1.6) {\scriptsize{$R_{j,2}$}};
\end{tikzpicture} 
\caption{\small{Decomposition of $K^{\mathfrak{Q}_1}_{j,t,0}$}}
\label{Decomposition of K_j,t,0}
\end{minipage}
\begin{minipage}{0.495\linewidth}
\centering
\begin{tikzpicture}[scale=0.6]
\draw[->] (-0.2,0) --(6.5,0) node[right] {$y_1$};
\draw[->] (0,-0.2) --(0,6.5) node[above] {$y_2$};
\draw (6,0) circle arc (0:90:6);
\filldraw (0,6) circle (.05);
\filldraw (0,6) node[left] {$2^{j+k}$};
\draw (3,0) circle arc (0:90:3);
\filldraw (0,3) circle (.05);
\filldraw (0,3) node[left] {$2^{j+k-1}$};
\filldraw (0,2.3) circle (.05);
\filldraw (0,2.3) node[left] {$2^{j+k-\frac{3}{2}}$};
\draw[dashed, domain=0:2.9338, smooth, variable=\t] plot ({\t}, {\t+2.3});
\node at (1,4.7) {$A_{j,k}^{\operatorname{off}}$};
\draw[dashed, domain=2.3:5.2338, smooth, variable=\t] plot ({\t}, {\t-2.3});
\node at (4.7,1) {$A_{j,k}^{\operatorname{off}}$};
\draw[dashed, domain=1.8:3.9320, smooth, variable=\t] plot ({\t}, {\t+0.6});
\draw[dashed, domain=2.4:4.5320, smooth, variable=\t] plot ({\t}, {\t-0.6});
\node[rotate=45] at (2.8,2.8) {\tiny{$R_{0}$}};
\draw[dashed, domain=1.4347:3.6, smooth, variable=\t] plot ({\t}, {\t+1.2});
\node[rotate=45] at (2.3424,3.2424) {\tiny{$R_{1}$}};
\draw[dashed, domain=2.6347:4.8, smooth, variable=\t] plot ({\t}, {\t-1.2});
\node[rotate=45] at (3.2424, 2.3424) {\tiny{$R_{1}$}};
\node[rotate=-45] at (3.8,3.8) {$A_{j,k}^{\operatorname{diag}}$};
\end{tikzpicture} 
\caption{\small{Decomposition of $K^{\mathfrak{Q}_1}_{j,t,k}$}}
\label{Decomposition of K_j,t,k}
\end{minipage}
\end{figure}
\end{center}

Next, for the terms $K^{\mathfrak{Q}_1}_{j,t,k}$ for $k \ge 1$, we proceed as follows 
$$
 	K^{\mathfrak{Q}_1}_{j,t,k}\ast f(x)
 	= \Big(\int_{A^{\operatorname{off}}_{j,k}} + \int_{A^{\operatorname{diag}}_{j,k}} \Big) K^{\mathfrak{Q}_1}_{j,t,k}(y)f(x-y)\, dy,
$$
where $A_{j,k}^{\operatorname{off}}:=\{y \in \mathfrak{Q}_1: 2^{j+k-1} \le |y| \le 2^{j+k}, |y_1-y_2| \ge 2^{j+k-\frac{3}{2}}\}$ and $A_{j,k}^{\operatorname{diag}}:=\{y \in \mathfrak{Q}_1: 2^{j+k-1} \le |y| \le 2^{j+k}, 0 \le |y_1-y_2| \le 2^{j+k-\frac{3}{2}}\}$. See Figure \ref{Decomposition of K_j,t,k}. Note that we can find a rectangle $\widetilde{R}_{j,k}^1 \supset A_{j,k}^{\operatorname{off}}$ containing the origin such that its longest side is in the direction $(1,1)$ and has size $|\widetilde{R}_{j,k}^1| \lesssim 2^{2(j+k)}$. Since for every $y \in A_{j,k}^{\operatorname{off}}$ and $u \in V_j$, we have $|y_1-y_2|\ge 2^{j+k-\frac{3}{2}}$ and $(1-u)y_1 \le \frac{4}{3} 2^k$, which imply estimate \eqref{easypart}. Thus, applying estimate (\ref{finalkernelA}) with $\theta=1$, we get that 
\begin{align*}\int_{A_{j,k}^{\operatorname{off}}}|K^{\mathfrak{Q}_1}_{j,t,k}(y)f(x-y)| \, dy 
	&\lesssim 2^{-\mu j}2^{-(j+k)N}\frac{2^{2(j+k)}}{|\widetilde{R}_{j,k}^1|}\int_{\widetilde{R}_{j,k}^1} |f(x-y)| \, dy \le 2^{-\mu j}2^{-(j+k)(N-2)}\mathfrak{m}_1f(x).
	\end{align*}
Summing over $k$ gives the desired result for this part.

Now we turn into the integral on $A_{j,k}^{\operatorname{diag}}$. A crucial observation is that for every $(y_1,y_2) \in A_{j,k}^{\operatorname{diag}}$, we have $|y_1| \ge 2^{j+k-3}$. We further decompose the integral as follows:
$$   
\Big|\int_{A^{\operatorname{diag}}_{j,k}} K^{\mathfrak{Q}_1}_{j,t,k}(y)f(x-y) dy\Big|\leq \sum^{j+k-1}_{\ell=1} \Big|\int K^{\mathfrak{Q}_1}_{j,t,k,\ell}(y)f(x-y)dy\Big|+\Big|\int K^{\mathfrak{Q}_1}_{j,t,k,0}(y) f(x-y)dy\Big|,
$$
where  $K^{\mathfrak{Q}_1}_{j,t,k,\ell}:=K^{\mathfrak{Q}_1}_{j,t}\chi_{R^{j+k}_{\ell}}$ with $R^{j+k}_{0}=\{y \in A_{j,k}^{\operatorname{diag}}:0 \leq |y_1-y_2|\leq 1\}$,  $R^{j+k}_{\ell}=\{y \in A_{j,k}^{\operatorname{diag}}: 2^{\ell-1}\leq |y_1-y_2|\leq 2^{\ell}\}$ for $1 \le \ell \le j+k-2$ and 
$R^{j+k}_{j+k-1}=\{y \in A_{j,k}^{\operatorname{diag}}: 2^{j+k-2}\leq |y_1-y_2|\leq 2^{j+k-\frac{3}{2}}\}$. See Figure \ref{Decomposition of K_j,t,k} (we remove the super-index $j+k$ of $R_{\ell}^{j+k}$ in the picture for the sake of simplicity). Observe that for each $0 \le \ell \le j+k-1$, we can find rectangles $\widetilde{R}^{j+k}_{\ell} \supset R^{j+k}_{\ell}$ with longest side aligned with the direction $(1,1)$ and $|\widetilde{R}^{j+k}_{\ell} |\lesssim 2^{j+k+\ell}$. 

We consider two cases. First, for $\ell \le k-6$ or $\ell \ge k+3$, we have $2^{-j-2}y_1 \ge 2|y_1-y_2|$ or $\frac{1}{2}|y_1-y_2| \ge  2^{-j+1}y_1$, respectively.
Moreover, for the latter cases  $2^{-j-2}y_1 \ge 2|y_1-y_2|$ or $\frac{1}{2}|y_1-y_2| \ge  2^{-j+1}y_1$, and $u\in V_j$, we have 
$$
    1+|uy_1-y_2| \ge 1+||y_1-y_2|-(1-u)y_1| \gtrsim 1+|y_1-y_2|.
$$
This gives us 
$$
\sup\limits_{u \in V_j}(1 + |u y_1 - y_2|)^{-N}\lesssim (1+|y_1-y_2|)^{-N}.
$$
 Therefore, applying the estimate (\ref{finalkernelA}) with $1/N < \theta < 2/3$, we get
\begin{align*}
    |K^{\mathfrak{Q}_1}_{j,t,k,\ell}\ast f(x)|&\lesssim 2^{-\mu j} 2^{3j(1-\theta)} 2^{-3(j+k)(1-\theta)} 2^{-\ell N\theta} \int_{\widetilde{R}^{j+k}_{\ell}} |f(x-y)|\,dy\lesssim 2^{-\mu j+j} 2^{-2k+3\theta k} 2^{\ell(1-N\theta)} \mathfrak{m}_{1}f(x). 
\end{align*}
Summing over $k$ and $\ell$, we obtain the desired result for this part. 

Secondly, for $k-6 \le \ell \le k+2$, we use the kernel estimate \eqref{finalkernelA} to deduce that 
\begin{align*}
   |K^{\mathfrak{Q}_1}_{j,t,k,\ell}\ast f(x)|&\lesssim  2^{-\mu j} 2^{3j(1-\theta)}  \int_{R^{j+k}_{\ell}}\frac{|f(x-y)|}{|y_1|^{3(1-\theta)}} dy \\
   &\lesssim 2^{-\mu j}2^{3j(1-\theta)}  2^{-(3j+3k)(1-\theta)}\int_{\widetilde{R}^{j+k}_{\ell}} |f(x-y)|\, dy\\
   &\leq  2^{-\mu j} 2^{j} 2^{-k+3\theta k}\mathfrak{m}_{1}f(x),
\end{align*}
where we used that $|\widetilde{R}^{j+k}_{\ell}|\lesssim 2^{j+k+\ell}\lesssim 2^{j+2k}$. Choosing $\theta<\frac{1}{3}$,  we sum over $k$ to complete the proof. 

\section{Square function estimates: proof of Proposition \ref{T_j estimate} for $j\ge2$}\label{sec:squarefunctiondomination}

In order to prove Proposition \ref{T_j estimate} for $j\ge 2$, we will first reduce the estimates for $\mathcal{C}_j^{\lambda}$ to estimates for certain square functions. Indeed, applying the Cauchy--Schwarz inequality in the $t$-variable, we obtain 
\begin{align}\label{decomintosquare}
\notag	|\mathcal C^{\lambda}_{j}(f,g)(x)|   \leq & c_{\mu,\nu} \Big(\int^{\sqrt{2^{1-j}}}_{0}|T^{\nu}_{t}g(x)|^{2}~t^{2b}dt\Big)^{1/2}\Big(\int^{\sqrt{2^{1-j}}}_{0}|B^{\mu}_{j,t}f(x)|^2~t^{2a}dt\Big)^{1/2} \\
    &=:c_{\mu,\nu}H^{\nu,j}_{b}g(x)G^{\mu,j}_{a}f(x), 
\end{align}
where the parameters $a$ and $b$ satisfy $2\nu+1=a+b$. The precise choice of $a$ and $b$ will be specified later, according to the range of exponents required in our estimates.

 In view of the preceding reduction, the bilinear estimate in Proposition \ref{T_j estimate} follows from suitable $L^p$ bounds for the square functions $G^{\mu,j}_{a}$ and $H^{\nu,j}_{b}$. These are contained in the propositions below.

\begin{proposition}
{\label{square function Gnu}}
Let $j \ge 2$ and $b>-1/2$. Then
$$
\|H^{\nu,j}_{b} g\|_p \lesssim 2^{-\varepsilon j} \|g\|_p,
$$
for all $ g \in L^{p}(\R^2)$ and for some $\varepsilon>0$, in the following cases:
\begin{enumerate}
\item If $p=2$, $\nu>-1/2$ {\label{Gnu estimate 1}}.
\item If $2< p \le \infty$, $\nu >0${\label{Gnu estimate 2}}.
\end{enumerate} 
In addition, for $\nu > -1/2$ and $k \in \N$, we have
\begin{equation}{\label{L4Gnuk}}
\Big\|\Big(\int^{2^{-k/2}}_{2^{-(k+1)/2}}|T^{\nu}_{t}g(x)|^2~t^{2b}dt\Big)^{1/2}\Big\|_4 \lesssim k\|g\|_4.
\end{equation}

\end{proposition}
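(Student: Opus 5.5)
Since $j\ge 2$, the range of integration in $H^{\nu,j}_b g$ is $t\in[0,\sqrt{2^{1-j}}]$, and the gain $2^{-\varepsilon j}$ comes only from the weight $t^{2b}$ over this short interval. We will decompose dyadically, $[0,\sqrt{2^{1-j}}]=\bigcup_{k\ge j-1}[2^{-(k+1)/2},2^{-k/2}]$, write $H^{\nu,j}_b g\le \sum_{k\ge j-1}H_k g$ with
\[
H_kg(x):=\Big(\int_{2^{-(k+1)/2}}^{2^{-k/2}}|T^\nu_t g(x)|^2t^{2b}\,dt\Big)^{1/2},
\]
and aim for $\|H_kg\|_p\lesssim k^{C}2^{-ck}\|g\|_p$ with $c>0$ whenever $b>-1/2$. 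Summing over $k\ge j-1$ then gives $2^{-\varepsilon j}$.

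\emph{Case $p=2$, $\nu>-1/2$.} We will use Plancherel and Fubini:
\[
\|H_kg\|_2^2=\int|\widehat g(\eta)|^2\varphi(\eta_2)^2\int_{2^{-(k+1)/2}}^{2^{-k/2}}\Big(1-\frac{\eta_1^2}{t^2\eta_2^2}\Big)_+^{2\nu}t^{2b}\,dt\,d\eta .
\]
With $s=|\eta_1/\eta_2|$, the inner integral is at most $2^{-kb}\int_s^{2^{-k/2}}(1-s^2/t^2)^{2\nu}\,dt$. Substituting $t=s/\sqrt{1-r}$ shows this is $\lesssim 2^{-k/2}$ uniformly in $s$, because $2\nu>-1$ makes the singularity at $t=s$ integrable. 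This gives $\|H_kg\|_2\lesssim 2^{-k(2b+1)/4}\|g\|_2$, and $b>-1/2$ ensures the exponent is negative.

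\emph{Case $p=\infty$, $\nu>0$.} Proposition~\ref{Tnu} gives $|T^\nu_tg|\lesssim \mathfrak m_tg\le\|g\|_\infty$, so $H_kg\lesssim \|g\|_\infty(\int_{2^{-(k+1)/2}}^{2^{-k/2}}t^{2b}dt)^{1/2}\approx 2^{-k(2b+1)/4}\|g\|_\infty$. The full square function is handled the same way, since $\int_0^{\sqrt{2^{1-j}}}t^{2b}\,dt\approx 2^{-j(2b+1)/2}$.

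\emph{Case $2<p<\infty$, $\nu>0$.} We interpolate between $p=2$ and $p=\infty$. This is legitimate because $g\mapsto (T^\nu_tg)_t$ is a linear operator into $L^p(L^2(t^{2b}dt))$, so Riesz--Thorin for mixed-norm spaces applies, and since $\nu>0$ both endpoints are available.

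\emph{Estimate \eqref{L4Gnuk}.} This is the main obstacle: for $-1/2<\nu\le 0$ the pointwise domination of Proposition~\ref{Tnu} fails, and we want a bound with only a factor $k$ and no decay. The plan is to decompose $T^\nu_t=T^{\nu,\psi_1}_t+\sum_{\ell\ge2}2^{-\ell\nu}T^{\nu,\psi}_{2^{-\ell},t}$ as in \eqref{partitionofunity}. The negative power $2^{-\ell\nu}$ must then be compensated by square-function orthogonality in $t$. For fixed $\ell$ and $t$ ranging over a dyadic interval of length $\approx 2^{-k/2}$, the multipliers of $T^{\nu,\psi}_{2^{-\ell},t}$ live in thin sectors of angular width $\approx 2^{-\ell}2^{-k/2}$ around the directions $(\pm t,1)$. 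We will discretize $t$ at that scale into $N\approx 2^{\ell}$ values $t_m$ and use $\int|\cdot|^2dt\approx 2^{-\ell-k/2}\sum_m|\cdot(t_m)|^2$ up to smooth errors, obtaining a directional square function over $\approx 2^\ell$ sectors. The $L^4$ bound then follows the C\'ordoba method. Square $\sum_m|T_m g|^2$, expand $\|\sum_m|T_mg|^2\|_2^2$, and use that the Minkowski sums of sector pairs have bounded overlap (the C\'ordoba--Fefferman geometric lemma for the annular sectors on the cone, which is essentially planar here). This reduces matters to $\|(\sum_m|T_m g_m|^2)^{1/2}\|_4$, which we bound by duality with the maximal operator over rectangles of eccentricity up to $(\delta t)^{-1}$ given by Corollary~\ref{eccentricity result}. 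The Kakeya/Nikodym maximal operator with $N$ directions has $L^2$ norm $\lesssim\log N\approx \ell$. Combining with the weights $2^{-\ell\nu}$, the factor $(2^{-\ell-k/2}\cdot 2^{\ell})^{1/2}$ from the discretization, and $t^{b}\approx 2^{-kb/2}$, each $\ell$ contributes roughly $\ell\,2^{-\ell(\nu+1/2)}$ times a harmless power of $2^{-k}$. This is summable since $\nu>-1/2$. The delicate point is the range $\ell\gtrsim k$, where the sectors are thinner than the $t$-grid spacing allows; there the logarithm is $\approx k$, which is the source of the factor $k$ in \eqref{L4Gnuk}.
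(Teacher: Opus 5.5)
Your $L^2$ and $L^p$ ($2<p\le\infty$) arguments are fine. Using dyadic pieces plus interpolation in place of the paper's direct Plancherel computation and Minkowski's inequality costs nothing.

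The gap is in \eqref{L4Gnuk}. After you localize $T^{\nu,\psi}_{2^{-\ell},t}$ to the $\approx 2^{\ell}$ thin sectors (the paper's trapezoids $S^j_n$) and dualize against the maximal operator via Corollary~\ref{eccentricity result}, what remains is $2^{-\ell}\,\|(\sum_{n,j}|S^j_ng|^2)^{1/2}\|_4^2\,\|\mathcal{K}_{1,2^{\ell+(k+1)/2}}h\|_2$. The heart of the matter is then the bound $\|(\sum_{n,j}|S^j_ng|^2)^{1/2}\|_4\lesssim \ell^{\beta}\|g\|_4$, with only a polylogarithmic loss in the number of sectors. This is Proposition~\ref{square function estimate}. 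It is proved from C\'ordoba's angular square function estimate (Proposition~\ref{directional square function estimate}, uniform in the aperture), Littlewood--Paley over the lacunary sectors $X_n$, the lattice Lemma~\ref{summation lemma}, and a bootstrap. Your plan never supplies this: the duality step only trades $\sum_m|T_mg_m|^2$ for $\sum_m|g_m|^2$. It cannot be skipped. Any bound losing a power $2^{c\ell}$, e.g.\ the $2^{\ell/4}$ obtained by interpolating $L^2$ orthogonality with a trivial $L^\infty$ bound for smooth sector cutoffs, makes the $\ell$-th term of size $2^{-\ell(\nu+1/2-c)}$. That only reaches $\nu>-1/2+c$, while the application needs $\nu$ just above $-1/2$.

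The tool you invoke instead does not work here. For each fixed $t$ the singular set of the planar cone multiplier is a pair of lines. So the sectors $\Theta_m=\{\xi_1/\xi_2\in I_m,\ \xi_2\in[1/2,2]\}$ are flat, and their Minkowski sums do not have bounded overlap. A point $(2\sigma,2)$ lies in $\Theta_m+\Theta_{m'}$ whenever $\sigma$ sits in the middle half of the slope range between $I_m$ and $I_{m'}$, so the overlap is of order $N^2$ for $\sigma$ in the middle of the range. The C\'ordoba--Fefferman lemma needs curvature, and in any case it yields the reverse inequality $\|\sum_m f_m\|_4\lesssim\|(\sum_m|f_m|^2)^{1/2}\|_4$. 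In the paper, bounded overlap is used only after further cutting the sectors into strips, and only for separated pairs: $S^{j,\alpha}_{n_1\ell}+S^k_{n_2\ell}$ with $n_1>n_2$, and $\Theta^\alpha_{j,\gamma_1}+\Theta^\alpha_{k,\gamma_2}$ with $|j-k|$ in a dyadic range. This produces lattice square functions handled by Lemma~\ref{summation lemma} and the directional maximal function, which is where the logarithmic losses come from.

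Two smaller points. Your discretization of $t$ is unjustified, since the multiplier changes by $O(1)$ when $t$ moves by $t2^{-\ell}$. It is also unnecessary: for each $t$ only $O(1)$ sectors meet the support, so $|T^{\nu,\psi}_{2^{-\ell},t}g|^2\lesssim\sum_{n,j}|T^{\nu,\psi}_{2^{-\ell},t}S^j_ng|^2$ pointwise, and each sector is active on a set of $t$ of $t^{2b}\,dt$-measure $\lesssim 2^{-\ell}$. Finally, the factor $k$ comes from eccentricities up to $2^{\ell}t^{-1}\approx 2^{\ell+k/2}$ in the Kakeya bound. That logarithm is $\approx k$ when $\ell\lesssim k$, not when $\ell\gtrsim k$.
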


\begin{proposition}
{\label{square function Gmu}}
Let $j \ge 2$. Then
$$
\|G^{\mu,j}_{a}f\|_p \lesssim 2^{-\varepsilon j} \|f\|_p, 
$$
for all $  f \in L^{p}(\R^2)$ for some $\varepsilon>0$, in the following cases:
\begin{enumerate}
\item If $p=2$, $\mu> 1/2$, $a>1/2$. {\label{Gmu estimate 1}}
\item If $2< p \le \infty$, $\mu>1$, $a> -1/2$. {\label{Gmu estimate 3}}
\item If $p=4$, $\mu> 1/2$, $a> 1/2$ {\label{Gmu estimate 2}}. 
\end{enumerate} 
In addition, for $2 \le p \le \infty$, $\mu > 0$, $k > 2+j$ and $a>-1/2$ we have
\begin{equation}{\label{LpGmuk}}
\Big\|\Big(\int^{2^{-k/2}}_{2^{-(k+1)/2}}|B^{\mu}_{j,t}f(x)|^2~t^{2a}dt\Big)^{1/2}\Big\|_p \lesssim 2^{-k(\frac{a}{2}+\frac{1}{4})}2^{(1-\mu)j}\|f\|_p. 
\end{equation}
\end{proposition}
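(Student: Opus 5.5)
The plan is to split the $t$-integral defining $G^{\mu,j}_{a}f$ into two parts. The first is the region away from the singularity, $0\le t\le \sqrt{2^{-2-j}}$, which we cut into dyadic shells $t\in[2^{-(k+1)/2},2^{-k/2}]$ with $k>j+2$. The second is the boundedly many shells $j-1\le k\le j+2$ near $t\approx 2^{-j/2}$; this is where the multiplier $(1-\xi_1^2/\xi_2^2-t^2)_+^{\mu-1}$ is actually singular on the support of $\psi(2^j(1-\xi_1^2/\xi_2^2))$. By the triangle inequality, $\|G^{\mu,j}_af\|_p$ is at most the sum over $k$ of the shell pieces.

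\emph{Far part and \eqref{LpGmuk}.} On each shell with $k>j+2$ we apply Proposition~\ref{operatorB} pointwise, uniformly in $t$. This gives
\[
\Big(\int_{2^{-(k+1)/2}}^{2^{-k/2}}|B^\mu_{j,t}f|^2t^{2a}\,dt\Big)^{1/2}\lesssim 2^{(1-\mu)j}\,\mathfrak m_1 f(x)\Big(\int_{2^{-(k+1)/2}}^{2^{-k/2}}t^{2a}dt\Big)^{1/2}\approx 2^{-k(\frac a2+\frac14)}2^{(1-\mu)j}\mathfrak m_1f(x),
\]
using $a>-1/2$. Since $\mathfrak m_1$ is a rotated strong maximal function, it is bounded on $L^p$ for $p>1$, and \eqref{LpGmuk} follows. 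Summing the geometric series over $k>j+2$ gives the bound $2^{-j(\frac a2+\frac14)+(1-\mu)j}$. This decays exponentially in $j$ when $\mu>1$, $a>-1/2$. It also decays when $\mu>1/2$, $a>1/2$, because then the exponent is below $-\tfrac12+1-\mu<0$. So the far part is harmless in all three cases.

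\emph{Near part, cases (1) and (2).} For $p=2$ we use Plancherel and Fubini. Writing $s=1-\xi_1^2/\xi_2^2\approx 2^{-j}$, we get
\[
\|G^{\mu,j}_af\|_2^2\lesssim\int|\widehat f(\xi)|^2\psi(2^js)^2\int_0^{\sqrt s}(s-t^2)^{2\mu-2}t^{2a}\,dt\,d\xi .
\]
The inner integral is finite when $\mu>1/2$ and is comparable to $s^{2\mu-\frac32+a}\approx 2^{-j(2\mu-\frac32+a)}$. The exponent is positive when $\mu>1/2$ and $a>1/2$. For $2<p\le\infty$ and $\mu>1$, Minkowski's inequality (since $p\ge2$) gives
\[
\|G^{\mu,j}_af\|_p\le\Big(\int\|B^\mu_{j,t}f\|_p^2t^{2a}dt\Big)^{1/2}.
\]
Proposition~\ref{Bmu} bounds $\|B^\mu_{j,t}f\|_p\lesssim\|\mathfrak m_{\sqrt{1-t^2}}\circ\mathfrak m_1f\|_p\lesssim\|f\|_p$ uniformly in $t$, because each $\mathfrak m_s$ is a rotated strong maximal function. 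The remaining factor $(\int_0^{\sqrt{2^{1-j}}}t^{2a}dt)^{1/2}\approx2^{-j(\frac a2+\frac14)}$ supplies the decay.

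\emph{Near part, case (3): the main obstacle.} Here $1/2<\mu\le1$, so Proposition~\ref{Bmu} is not available and we must exploit the $L^4$ structure. We write $B^\mu_{j,t}f=(1-t^2)^{\mu-1}T^{\mu-1}_{\sqrt{1-t^2}}f_{1,j}$, as in the proof of Proposition~\ref{Bmu}. We then expand $T^{\mu-1}$ dyadically in the distance $\delta=2^{-\ell}$ to the singular lines, which gives pieces $2^{-\ell(\mu-1)}T^{\mu-1,\psi}_{\delta,\sqrt{1-t^2}}$. The frequency support of each piece is a pair of thin trapezoids near the lines $\xi_1=\pm\sqrt{1-t^2}\,\xi_2$, of angular width about $\delta$. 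The plan is to use Córdoba's approach. First, $\|G\|_4^2=\|G^2\|_2$; we pair $G^2$ with $w\in L^2$. Second, we decompose $f$ into angular sectors of aperture comparable to $\delta$ (equivalently $2^{-j}$ times the relevant scale). The key observation is that sums of pairs of these sectors overlap only boundedly, since they lie in an annulus $\xi_2\in[1/2,2]$. This lets us replace $|\sum_\theta(\cdot)|^2$ by $\sum_\theta|\cdot|^2$ in $L^2$, with at most a logarithmic loss. Next, Corollary~\ref{eccentricity result} gives $|T^{\mu-1,\psi}_{\delta,t}f_\theta|^2\lesssim \sum_iC_i\frac{\chi_{R_i^t}}{|R_i^t|}*|f_\theta|^2$ by Cauchy--Schwarz. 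Dualizing moves the averages onto $w$. This produces a Nikodym/Kakeya-type maximal operator over rectangles of eccentricity at most $2^{j+\ell}$. Córdoba's $L^2$ bound for this operator loses only a power of $\log(2^{j+\ell})$, and we finish with the $L^4$ sectorial square function bound, which also has only logarithmic loss. Collecting the factors gives roughly $(j+\ell)^C\,2^{-\ell(\mu-\frac12)}$ times the $t$-integral weight, which is comparable to the $L^2$ gain $2^{-j(2\mu-\frac32+a)/2}$. With $\mu>1/2$ and $a>1/2$, summing in $\ell$ leaves a net $2^{-\varepsilon j}$. The delicate points are making the overlap count of sector sums uniform in $t$ and choosing the sector widths so that the $\log$ losses do not interact badly with the sum over $\ell$. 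I expect this step to take essentially all of the work.
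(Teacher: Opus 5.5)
Your plan follows the paper's proof in every part: (1) by Plancherel, (2) by Proposition~\ref{Bmu} and Minkowski, and \eqref{LpGmuk} by Proposition~\ref{operatorB} and Minkowski. For (3) you also follow the paper: write $B^{\mu}_{j,t}f=(1-t^2)^{\mu-1}T^{\mu-1}_{\sqrt{1-t^2}}f_{1,j}$, decompose dyadically in $\ell$, cut into sectors of aperture $2^{-\ell}$, apply Corollary~\ref{eccentricity result}, dualize against $h\in L^2$, and close with \eqref{estimate} and Proposition~\ref{square function estimate}.

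The one structural difference is that you first remove $t\le\sqrt{2^{-2-j}}$ using Proposition~\ref{operatorB}. The paper instead runs the $L^4$ argument over the whole range after substituting $s=\sqrt{1-t^2}$, and uses $a>1/2$ to bound the weight $(1-s^2)^{a-1/2}\le 2^{-j(a-1/2)}$ near $s=1$. Your split is legitimate, and your check $\frac34-\mu-\frac a2<0$ is correct. On the remaining range $1-s^2=t^2\approx 2^{-j}$, so there the weight is $\approx 2^{-j(a-1/2)}$ for every $a$.

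Your claim that the total matches the $L^2$ exponent $2^{-j(\mu+\frac a2-\frac34)}$ needs one more observation. Only $\ell\ge j-3$ contribute, since $1-\xi_1^2/((1-t^2)\xi_2^2)\le 2^{2-j}$ on $\operatorname{supp}\widehat{f_{1,j}}$. Without this you get the paper's $2^{-j(\frac a2-\frac14)}$, which already suffices for $a>1/2$.

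One justification in (3) is wrong, although the step it supports is true. The sums $\theta+\theta'$ of your sectors do \emph{not} overlap boundedly. They are nearly parallel flat strips of width $2^{-\ell}$ with no curvature, and the overlap of their pairwise sums grows with the number of sectors. This failure of biorthogonality is why Propositions~\ref{directional square function estimate} and~\ref{square function estimate} need C\'ordoba's more elaborate argument.

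The reduction from $|T^{\mu-1,\psi}_{2^{-\ell},s}f_{1,j}|^2$ to $\sum_k|T^{\mu-1,\psi}_{2^{-\ell},s}S_kf_{1,j}|^2$ holds for a simpler reason, the one the paper uses ($\mathcal{A}_t$ has at most $C$ elements). For each fixed $s$, the multiplier of $T^{\mu-1,\psi}_{2^{-\ell},s}$ meets only $O(1)$ sectors of aperture $2^{-\ell}$. So the inequality is a pointwise Cauchy--Schwarz with no loss, logarithmic or otherwise.

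You should also make explicit where your factor $2^{-\ell/2}$ comes from: each $S_k$ is active only for $s$ in a set of measure $\lesssim 2^{-\ell}$. That is what makes $\sum_\ell 2^{-\ell(\mu-1)}2^{-\ell/2}\ell^{C}$ converge exactly when $\mu>1/2$. Finally, since $\sqrt{1-t^2}\approx 1$, the rectangles have eccentricity $\lesssim 2^{\ell}$, not $2^{j+\ell}$; this is harmless.
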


\subsection{Proof of Proposition \ref{T_j estimate} for $j\geq 2$}

Along this subsection we will fix $j\ge2$.

\subsubsection{Proof of Proposition \ref{T_j estimate}, part \eqref{main estimate 1}} 

Fix $\lambda >0$ and take $\mu=a=\frac{1}{2}+\frac{\lambda}{2}$, $\nu=b=-\frac{1}{2}+\frac{\lambda}{2}$. Using H\"{o}lder's inequality we have that  
\[\|\mathcal C^{\lambda}_{j}(f,g)\|_{p} \lesssim \|H^{\nu,j}_{b}g\|_{p_2}\|G^{\mu,j}_{a}f\|_{p_1}.
\]
In view of the above estimate, observe that the  cases $(p_1,p_2)\in \{(2,2), (4,2)\}$ in part \eqref{main estimate 1} of Proposition~\ref{T_j estimate} follow by invoking \eqref{Gnu estimate 1} in Proposition \ref{square function Gnu}, and \eqref{Gmu estimate 1} and \eqref{Gmu estimate 2} in  Proposition \ref{square function Gmu}. 
    
Let us move to the cases $(p_1,p_2)\in \{(2,4),(4,4)\}$ (we remark that here we cannot use the symmetry over $f$ and $g$ as the bilinear multipliers $m_j^{\lambda}(\xi,\eta)$ are not symmetric with respect to $\xi$ and $\eta$). We first decompose the operator $\mathcal C_j^\lambda(f,g)(x)$ further dyadically in the following manner
\begin{align*}
|\mathcal C_j^{\lambda}(f,g)(x)|& \lesssim  \int_{\sqrt{2^{-2-j}}}^{\sqrt{2^{1-j}}}T_t^{\nu}g(x)B_{j,t}^{\mu}f(x)  t^{2\nu+1} dt + \sum\limits_{k=2+j}^{\infty}\int_{2^{-(k+1)/2}}^{2^{-k/2}} T_t^{\nu}g(x) B_{j,t}^{\mu}f(x)t^{2\nu+1} dt\\
& \le  \Big(\int_{\sqrt{2^{-2-j}}}^{\sqrt{2^{1-j}}}|T_t^{\nu}g(x) |^2 t^{2b}dt\Big)^{1/2} \Big(\int_{\sqrt{2^{-2-j}}}^{\sqrt{2^{1-j}}}|B_{j,t}^{\mu}f(x)|^2 t^{2a}dt\Big)^{1/2}\\
&\quad+\sum\limits_{k=2+j}^{\infty} \Big(\int_{2^{-(k+1)/2}}^{2^{-k/2}}|T_t^{\nu}g(x) |^2 t^{2b}dt\Big)^{1/2}\Big(\int_{2^{-(k+1)/2}}^{2^{-k/2}}|B_{j,t}^{\mu}f(x)|^2 t^{2a}dt\Big)^{1/2}\\
&=:  S^2g(x) S^1f(x)+\sum\limits_{k=2+j}^{\infty}S_k^2g(x)S_k^1f(x) .
\end{align*}
Then in particular
\begin{equation}{\label{dyadically estimate}}
\|\mathcal C_j^{\lambda}(f,g)\|_p \lesssim \|S^1f\|_{p_1}\|S^2g\|_{p_2}+ \sum\limits_{k=2+j}^{\infty}\|S_k^1f\|_{p_1} \|S_k^2g\|_{p_2}.    
\end{equation}
We remark that the above estimate \eqref{dyadically estimate} indeed holds for any $2 \le p_1,p_2 \le \infty$ and $\frac{1}{p}=\frac{1}{p_1}+\frac{1}{p_2}$; this will be useful in later Subsections \ref{412} and \ref{413}.

For the case $(p_1,p_2)=(2,4)$, we get that 
$$
\|\mathcal C_j^{\lambda}(f,g)\|_{\frac{4}{3}} \lesssim \|S^1f\|_{2}\|S^2g\|_{4}+ \sum\limits_{k=2+j}^{\infty}\|S_k^1f\|_{2} \|S_k^2g\|_{4}.
$$
Applying Proposition \ref{square function Gmu} part \eqref{Gmu estimate 1} and the estimate \eqref{L4Gnuk}, we have (the quantities $\varepsilon$ below may not be the same, but they are all positive) 
$$
\|S^1f\|_{2}\|S^2g\|_{4} \lesssim 2^{-\varepsilon j}j\|f\|_2 \|g\|_4 \lesssim 2^{-\varepsilon j}
\|f\|_2\|g\|_4.$$
For the second term, we use estimates \eqref{LpGmuk} with $p=2$ and \eqref{L4Gnuk},
$$
\sum\limits_{k=2+j}^{\infty}\|S_k^1f\|_{2} \|S_k^2g\|_{4} \lesssim  \sum\limits_{k=2+j}^{\infty} 2^{-k(\frac{a}{2}+\frac{1}{4})}2^{(1-\mu)j}k \|f\|_2 \|g\|_4 \lesssim 2^{-\varepsilon j}\|f\|_4\|g\|_4.$$

For the case $(p_1,p_2)=(4,4)$ we have that
$$
\|\mathcal C_j^{\lambda}(f,g)\|_{2} \lesssim \|S^1f\|_{4}\|S^2g\|_{4}+ \sum\limits_{k=2+j}^{\infty}\|S_k^1f\|_{4} \|S_k^2g\|_{4}.
$$
The first term in the above is handled using the estimate \eqref{L4Gnuk} and part \eqref{Gmu estimate 2} in Proposition \ref{square function Gmu}. For we have 
$$\|S^1f\|_{4}\|S^2g\|_{4} \lesssim 2^{-\varepsilon j}j\|f\|_4\|g\|_4 \lesssim 2^{-\varepsilon j}\|f\|_4\|g\|_4.$$
For the second term we use \eqref{L4Gnuk} and  \eqref{LpGmuk} with $p=4$, thus
$$
\sum\limits_{k=2+j}^{\infty}\|S_k^1f\|_{4} \|S_k^2g\|_{4} \lesssim  \sum\limits_{k=2+j}^{\infty} 2^{-k(\frac{a}{2}+\frac{1}{4})}2^{(1-\mu)j }k\|f\|_4\|g\|_4 \lesssim 2^{-\varepsilon j}\|f\|_4\|g\|_4.$$
This finishes the proof of part \eqref{main estimate 1}. 

\subsubsection{Proof of Proposition \ref{T_j estimate}, part \eqref{main estimate 4}} {\label{412}}

As we mentioned above, we cannot use the symmetry over $f$ and $g$, and hence we need to discuss the two cases separately: $p_{+}=p_1$, $p_{-}=p_2$, and $p_{+}=p_2$, $p_{-}=p_1$.

We first consider the case $2 \le p_{-}= p_2 \le 4 \le p_{+}=p_1 \le \infty$. In this case, we first interpolate between parts \eqref{Gmu estimate 3} (with $p = \infty$) and \eqref{Gmu estimate 2}  in  Proposition \ref{square function Gmu} and obtain that 
\begin{equation}{\label{4121}}
\|G_a^{\mu,j}f\|_{p_1} \lesssim 2^{-\varepsilon j}\|f\|_{p_1}    
\end{equation}
holds for $4 \le p_1 \le \infty$, $\mu > 1-\frac{2}{p_1}$ and $a>1/2$. 
 Further, we interpolate between \eqref{L4Gnuk} and part \eqref{Gnu estimate 1} of Proposition \ref{square function Gnu} with $j=k+1$, and obtain that, for $2 \le p_2 \le 4$, 
\begin{equation}{\label{4122}}
\Big\|\Big(\int^{2^{-k/2}}_{2^{-(k+1)/2}}|T^{\nu}_{t}g(x)|^2~t^{2b}dt\Big)^{1/2}\Big\|_{p_2} \lesssim 2^{-\varepsilon k}k^{2-\frac{4}{p_2}}\|g\|_{p_2},   
\end{equation}
where $\nu > -1/2$, $b > -1/2$ and $k \in \N$. For $\lambda > \frac{1}{2}-\frac{2}{p_{+}}=\frac{1}{2}-\frac{2}{p_1}$, we take $\widetilde{\lambda}=\lambda - (\frac{1}{2}-\frac{2}{p_1})$, $\mu =1 - \frac{2}{p_1}+\frac{\widetilde{\lambda}}{2}$, $\nu = b =-\frac{1}{2}+\frac{\widetilde{\lambda}}{2}$ and $a = \frac{1}{2}+\frac{\widetilde{\lambda}}{2}$. 
Continuing from estimate \eqref{dyadically estimate}, and applying estimates \eqref{4121} \eqref{4122} and \eqref{LpGmuk}, we get that 
\begin{align*}
\|\mathcal C_j^{\lambda}(f,g)\|_p & \lesssim \|S^1f\|_{p_1}\|S^2g\|_{p_2}+ \sum\limits_{k=2+j}^{\infty}\|S_k^1f\|_{p_1} \|S_k^2g\|_{p_2} \nonumber\\
& \lesssim 2^{-\varepsilon j} j^{2-\frac{4}{p_2}}\|f\|_{p_1}\|g\|_{p_2} +2^{(1-\mu)j}\sum\limits_{k=2+j}^{\infty}2^{-k(\frac{a}{2}+\frac{1}{4})}2^{-\varepsilon k} k^{2-\frac{4}{p_2}}\|f\|_{p_1}\|g\|_{p_2}\\
& \lesssim 2^{-\varepsilon j}\|f\|_{p_1}\|g\|_{p_2}. 
\end{align*}

Next, we consider the case $2 \le p_{-}=p_1 \le 4 \le p_{+} = p_2 \le \infty$. Interpolating between parts ({\ref{Gmu estimate 1}}) and ({\ref{Gmu estimate 2}}) in Proposition {\ref{square function Gmu}}, we obtain that for $2 \le p_1 \le 4$, $\mu > 1/2$ and $a >1/2$,
\begin{equation}{\label{4123}}
\|G_{a}^{\mu,j} f \|_{p_1} \lesssim 2^{-\varepsilon j}\|f\|_{p_1}.    
\end{equation}
We also interpolate between \eqref{L4Gnuk}  and part \eqref{Gnu estimate 2} of Proposition \ref{square function Gnu} with $j=k+1$ and $p = \infty$, and obtain that for $4 \le p_2 \le \infty$, 
\begin{equation}{\label{Gnu p>4}}
\Big\|\Big(\int^{2^{-k/2}}_{2^{-(k+1)/2}}|T^{\nu}_{t}f(x)|^2~t^{2b}dt\Big)^{1/2}\Big\|_{p_2} \lesssim   2^{-\varepsilon k}k^{\frac{4}{p_2}}\|f\|_{p_2}. \end{equation} 
holds for $\nu > -\frac{2}{p_2}$, $b > -1/2$ and $k \in \N$. For $\lambda > \frac{1}{2}-\frac{2}{p_{+}}=\frac{1}{2}-\frac{2}{p_2}$, we take $\widetilde {\lambda} = \lambda -(\frac{1}{2}-\frac{2}{p_2})$, $\nu = b = -\frac{2}{p_2}+ \frac{\widetilde{\lambda}}{2}$, $\mu=\frac{1}{2}+\frac{\widetilde{\lambda}}{2}$ and $a=1-\frac{2}{p_2}+\frac{\tilde{\lambda}}{2}$. In view of estimate \eqref{dyadically estimate}, and applying estimates \eqref{4123}, \eqref{Gnu p>4} and \eqref{LpGmuk},  we get that 
\begin{align*}
\|\mathcal C_j^{\lambda}(f,g)\|_p \lesssim 2^{-\varepsilon j}\|f\|_{p_1}\|g\|_{p_2}. 
\end{align*}
These prove part \eqref{main estimate 4} of Proposition~\ref{T_j estimate}. 

\subsubsection{Proof of Proposition \ref{T_j estimate}, part \eqref{main estimate 5}}{\label{413}}

In this case, for $\lambda > 1-\frac{2}{p}$, we take $\widetilde{\lambda}=\lambda - (1-\frac{2}{p})$, $\mu =1 - \frac{2}{p_1}+\frac{\widetilde{\lambda}}{2}$, $\nu = b = -\frac{2}{p_2}+\frac{\widetilde{\lambda}}{2}$ and $a = 1-\frac{2}{p_2}+\frac{\widetilde{\lambda}}{2}$. Starting again with estimate \eqref{dyadically estimate} and applying estimates \eqref{4121}, \eqref{Gnu p>4} and \eqref{LpGmuk},  we get that $
\|\mathcal C_j^{\lambda}(f,g)\|_p  \lesssim 2^{-\varepsilon j}\|f\|_{p_1}\|g\|_{p_2}$.
This completes the proof of Proposition~\ref{T_j estimate}, in the case $j\ge2$. 

\section{Proof of $L^p$ estimates for square functions, $p\neq 4$}\label{sec:pnot4}
In this section we will provide the proofs of the square function estimates, namely Proposition  \ref{square function Gnu} and Proposition \ref{square function Gmu}. The exposition will be systematic, splitting the proofs for each of the parts in the statements. At this point the only, but crucial, tool which will not be proven in this section are the $L^4$ estimates for the square functions. These are conceptually and technically involved and they will be proven in the subsequent Section \ref{sec:L4sf}.

\subsection{Proof of Proposition \ref{square function Gnu}}

\subsubsection{Proof of Proposition \ref{square function Gnu}, part \eqref{Gnu estimate 1}: $L^2$ estimate}

Let $\nu>-1/2$ and $b>-1/2$.
     Using Plancherel's theorem and Fubini's theorem, we have 
    \begin{align*}
        \Vert H^{\nu,j}_bg\Vert^2_{L^{2}}&=\int_{\R^2} \int^{\sqrt{2^{1-j}}}_{0} |T^{\nu}_{t}g(x)|^2 t^{2b} \,dt\, dx\\
        &= \int^{\sqrt{2^{1-j}}}_{0}  t^{2b} \int_{\R^2} \Big(1-\frac{\eta^2_1}{t^2\eta^2_2}\Big)_{+}^{2\nu} (\varphi(\eta_2))^2 |\widehat{g}(\eta)|^2 \,d\eta\,dt \\
        &=\int_{\R^2} (\varphi(\eta_2))^2 |\widehat{g}(\eta)|^2 \int^{\sqrt{2^{1-j}}}_{\frac{|\eta_1|}{\eta_2}} t^{2b} \Big(1-\frac{\eta^2_1}{t^2\eta^2_2}\Big)^{2\nu} \,dt\, d\eta\\
        &= \int_{\R^2} (\varphi(\eta_2))^2 |\widehat{g}(\eta)|^2  \Big(\frac{|\eta_1|}{\eta_2}\Big)^{2b+1} \int^{\sqrt{2^{1-j}}\frac{\eta_2}{|\eta_1|}}_{1} t^{2b-4\nu} (t^2-1)^{2\nu} \,dt\,  d\eta.
    \end{align*}
    We distinguish two cases. If $\sqrt{2^{1-j}}\frac{\eta_2}{|\eta_1|} \ge2$ we have, since $\nu>-\frac{1}{2}$,
    \begin{align*}
        \int^{\sqrt{2^{1-j}}\frac{\eta_2}{|\eta_1|}}_{1} t^{2b-4\nu} (t^2-1)^{2\nu} \,dt&=\int^2_{1} t^{2b-4\nu} (t^2-1)^{2\nu} \,dt+\int^{\sqrt{2^{1-j}}\frac{\eta_2} {|\eta_1|}}_2 t^{2b-4\nu} (t^2-1)^{2\nu} \,dt\\ &\lesssim 1+\int_{2}^{\sqrt{2^{1-j}}\frac{\eta_2}{|\eta_1|}} t^{2b-4\nu} (t+1)^{2\nu} (t-1)^{2\nu} \,dt\\
    &\lesssim \Big(\frac{\eta_2}{|\eta_1|}\Big)^{2b+1} 2^{-\frac{j(2b+1)}{2}},
    \end{align*}
    where we have used the fact that $t-1\geq \frac{t}{2}$ for $t\geq2$. On the other hand, if $1< \sqrt{2^{1-j}}\frac{\eta_2}{|\eta_1|} < 2$, we also have
    $$ \int^{\sqrt{2^{1-j}}\frac{\eta_2}{|\eta_1|}}_{1} t^{2b-4\nu} (t^2-1)^{2\nu} \,dt \lesssim 1 \lesssim \Big(\frac{\eta_2}{|\eta_1|}\Big)^{2b+1} 2^{-\frac{j(2b+1)}{2}}. $$    
 The above computations altogether imply the desired estimate.

    \subsubsection{Proof of Proposition \ref{square function Gnu}, part \eqref{Gnu estimate 2}: $L^p$ estimate, $2< p \le \infty$}
    
    Suppose $\nu >0$, $b > -1/2$.
Using Proposition \ref{Tnu} and Minkowski's inequality,
\begin{align*}
\|H^{\nu,j}_{b}f\|_p &= \Big\| \Big(\int^{\sqrt{2^{1-j}}}_{0} |T^{\nu}_{t}f(x)|^2 t^{2b} \, dt\Big)^{1/2} \Big\|_p  \le \Big\| \Big(\int^{\sqrt{2^{1-j}}}_{0} |\mathfrak{m}_t f(x)|^2 t^{2b} \, dt\Big)^{1/2}\Big\|_p  \\
& \le \Big(\int^{ \sqrt{2^{1-j}}}_{0} \|\mathfrak{m}_t f(x) \|_p^2 \,t^{2b} \, dt\Big)^{1/2} \lesssim 2^{-\frac{(2b+1)j}{4}}\|f\|_p, 
\end{align*}
and again we conclude the result in this case.

  \subsubsection{Proof of Proposition \ref{square function Gnu}, \eqref{L4Gnuk}: an improvement of $L^4$ estimate} 
  The proof of \eqref{L4Gnuk} is a delicate point and it will be shown in Section \ref{sec:L4sf}.

\subsection{Proof of Proposition \ref{square function Gmu}}

\subsubsection{Proof of Proposition \ref{square function Gmu}, part \eqref{Gmu estimate 1}: $L^2$ estimate}

 Let $\mu>1/2$ and $a>1/2$. 
 Using Plancherel's theorem and Fubini's theorem we get that
    \begin{align*}
        \Vert G^{\mu,j}_{a}f\Vert^2_{L^2}& = \int^{\sqrt{2^{1-j}}}_0  t^{2a}\int_{\R^2} \Big(\psi\Big(2^j\big(1-\frac{\xi^2_1}{\xi^2_2}\big)\Big)\Big)^2 (\varphi(\xi_2))^2 \Big( 1-\frac{\xi^2_1}{\xi^2_2}-t^2\Big)^{2\mu-2}_{+} |\widehat{f}(\xi)|^2 \,d\xi \,dt \\
        &= \int_{\R^2} \Big(\psi\Big(2^j\big(1-\frac{\xi^2_1}{\xi^2_2}\big)\Big)\Big)^2 (\varphi(\xi_2))^2 |\widehat{f}(\xi)|^2 \int^{\sqrt{1-\frac{\xi^2_1}{\xi^2_2}}}_{0} \Big( 1-\frac{\xi^2_1}{\xi^2_2}-t^2\Big)^{2\mu-2} t^{2a}\,dt\, d\xi.
    \end{align*}
    Now, 
    \begin{align*}
        \int^{\sqrt{1-\frac{\xi^2_1}{\xi^2_2}}}_{0} \Big( 1-\frac{\xi^2_1}{\xi^2_2}-t^2\Big)^{2\mu-2} t^{2a}\,dt&=\int^{\frac{1}{2}\sqrt{1-\frac{\xi^2_1}{\xi^2_2}}}_{0} \Big( 1-\frac{\xi^2_1}{\xi^2_2}-t^2\Big)^{2\mu-2} t^{2a}\,dt\\
        & \quad +\int^{\sqrt{1-\frac{\xi^2_1}{\xi^2_2}}}_{\frac{1}{2}\sqrt{1-\frac{\xi^2_1}{\xi^2_2}}} \Big( 1-\frac{\xi^2_1}{\xi^2_2}-t^2\Big)^{2\mu-2} t^{2a}\,dt.
    \end{align*}
    For $a>1/2$, the first component on the right hand side is dominated by $C_{\mu,a} (1-\frac{\xi^2_1}{\xi^2_2})^{2\mu+a-\frac{3}{2}}$.  On the other hand, by direct integration we get that the second component of the right hand side is dominated by $C_{\mu,a}(1-\frac{\xi^2_1}{\xi^2_2})^{2\mu+a-\frac{3}{2}}$, provided $\mu>1/2$. Now, using the support property of $\psi$, we get \[\Vert G^{\mu,j}_{a}f\Vert_{L^2} \lesssim 2^{-j(\mu+\frac{a}{2}-\frac{3}{4})}\Vert f\Vert_{L^2},
    \]
    which is the claimed estimate.

      \subsubsection{Proof of Proposition \ref{square function Gmu}, part \eqref{Gmu estimate 3}: $L^p$ estimate, $2 < p \le \infty$} Using Proposition \ref{Bmu} and Minkowski's integral inequality, we get
\begin{align*}
\|G^{\mu,j}_{a}f\|_p &= \Big\| \Big(\int^{\sqrt{2^{1-j}}}_{0} |B^{\mu}_{j,t}f(x)|^2  t^{2a} \, dt\Big)^{1/2} \Big\|_p   \\
&\le \Big\| \Big(\int^{\sqrt{2^{1-j}}}_{0} (1-t^2)^{2\mu-2}|\mathfrak{m}_{\sqrt{1-t^2}} \circ \mathfrak{m}_1 f(x)|^2 t^{2a} \, dt\Big)^{1/2}\Big\|_p  \\
& \le \Big(\int^{\sqrt{2^{1-j}}}_{0} \|\mathfrak{m}_{\sqrt{1-t^2}} \circ \mathfrak{m}_1 f(x) \|_p^2 \,t^{2a} \, dt\Big)^{1/2}  \\
& \lesssim 2^{-\frac{(2a+1)j}{4}}\|f\|_p, 
\end{align*}
and the proof is complete.

      \subsubsection{Proof of Proposition \ref{square function Gmu}, part \eqref{Gmu estimate 2}: an $L^4$ estimate} Analogously as for \eqref{L4Gnuk}, part \eqref{Gmu estimate 2} in Proposition \ref{square function Gmu} requires further attention and it will be shown in Section \ref{sec:L4sf}.

\subsubsection{Proof of \eqref{LpGmuk}: away from singularity part, $L^p$ estimate, $2\le p \le \infty$}{\label{Proof of LpGmuk}} Using Proposition \ref{operatorB} and Minkowski's integral inequality, we get
\begin{align*}
\Big\|\Big(\int^{2^{-k/2}}_{2^{-(k+1)/2}}|B^{\mu}_{j,t}f(x)|^2~t^{2a}dt\Big)^{1/2}\Big\|_p  &= 2^{(1-\mu)j}\Big\| \Big(\int^{2^{-k/2}}_{2^{-(k+1)/2}} |\mathfrak{m}_1f(x)|^2  t^{2a} \, dt\Big)^{1/2} \Big\|_p   \\
& \le 2^{(1-\mu)j}\Big(\int^{2^{-k/2}}_{2^{-(k+1)/2}}\| \mathfrak{m}_1 f(x) \|_p^2 \,t^{2a} \, dt\Big)^{1/2}  \\
& \lesssim 2^{-k(\frac{a}{2}+\frac{1}{4})}2^{(1-\mu)j}\|f\|_p. 
\end{align*}
This finishes the proof.

\section{$L^4$ estimates for square functions}
\label{sec:L4sf}
In this section we consider a geometric square function and establish its $L^4$ boundedness; such an estimate will be used to prove the local $L^4$ estimate for the square function $H_{b}^{\nu,j}$, namely \eqref{L4Gnuk} in Proposition~\ref{square function Gnu}, and the $L^4$ estimate for $G_a^{\mu,j}$, namely Proposition \ref{square function Gmu}, part \eqref{Gmu estimate 2}.

For $n\in \Z$, let $X_n$ denote the trapezoid in $\R^2$ with vertices at  $(2^{n-1},1/2)$, $(2^{n},1/2)$, $(2^{n+1},2)$ and $(2^{n+2},2)$. For a fixed $\ell\in \N$, we further decompose each $X_n$ into $2^{\ell}$ trapezoids $\{S_n^j\}_{j=1}^{2^{\ell}}$, where $S_n^j$ represents the trapezoid with its vertices at  $(2^{n-1}+(j-1)2^{n-1-{\ell}},1/2),(2^{n-1}+ j2^{n-1-{\ell}},1/2),(2^{n+1}+(j-1)2^{n+1-{\ell}},2)$ and $(2^{n+1}+ j2^{n+1-{\ell}},2)$. 
See Figure \ref{fig:Snj}.

\begin{center}
\begin{figure}
\begin{tikzpicture}[scale=0.75]
\draw[->] (-0.2,0) --(13,0) node[right] {$\xi_1$};
\draw[->] (0,-0.2) --(0,6.5) node[above] {$\xi_2$};
\draw (0,1.5) --(13,1.5);
\draw (0,6) --(13,6);
\filldraw (0,6) circle (.05);
\filldraw (0,6) node[left] {$2$};
\filldraw (0,1.5) circle (.05);
\filldraw (0,1.5) node[left] {$\frac{1}{2}$};
\draw (0.375,1.5) -- (1.5,6);
\draw (0.75,1.5) -- (3,6);
\draw (1.5,1.5) -- (6,6);
\filldraw (6,6) circle (.05);
\filldraw (6,6) node[above] {\footnotesize{$(2,2)$}};
\filldraw (1.5,1.5) circle (.05);
\filldraw (1.5,1.5) node[below] {\footnotesize{$(\frac{1}{2},\frac{1}{2})$}};
\draw (3,1.5) -- (12,6);
\draw [dashed] (1.875,1.5) -- (7.5,6);
\draw [dashed] (2.25,1.5) -- (9,6);
\draw [dashed] (2.625,1.5) -- (10.5,6);
\filldraw (12,6) circle (.05);
\filldraw (12,6) node[above] {\footnotesize{$(4,2)$}};
\filldraw (3,1.5) circle (.05);
\filldraw (3,1.5) node[below] {\footnotesize{$(1,\frac{1}{2})$}};
\node at (1.8,4.8) {{$X_{-2}$}};
\node at (3.5,4.8) {{$X_{-1}$}};
\node at (5.3,4.8) {\scriptsize{$S_{0}^{1}$}};
\node at (6.5,4.8) {\scriptsize{$S_{0}^{2}$}};
\node at (7.7,4.8) {\scriptsize{$S_{0}^{3}$}};
\node at (8.9,4.8) {\scriptsize{$S_{0}^{4}$}};
\node at (4.3,3.0) {{$X_{0}$}};
\end{tikzpicture} 
\caption{The trapezoids $X_{-2}, X_{-1}$ and $X_0$, and an illustration of $S_0^j$, $j=1,\ldots, 2^2$}
\label{fig:Snj}
\end{figure}
\end{center}

Then, 
$$
(0,\infty) \times [1/2,2]=\bigcup_{n\in \Z}\bigcup_{j=1}^{2^{\ell}}S_n^j.
$$
In what follows, with a slight abuse of notation, for each $A \subset \mathbb{R}^2$, we define $\widehat{Af}(\xi):=\chi_A(\xi)\widehat{f}(\xi)$. 

We prove an $L^4$ estimate for the corresponding square function. The approach is motivated by \cite{Cor1981, Carbery}.
\begin{proposition}\label{square function estimate}
There exists $\beta>0$ such that for every $\ell\in \N$, the inequality
$$
\Big\|\Big(\sum_{n\in \Z}\sum_{j=1}^{2^{\ell}}|S_n^jf|^2\Big)^{1/2}\Big\|_4 \lesssim {\ell}^{\beta}\|f\|_4
$$
holds, where the implicit constant is independent of $\ell$.
\end{proposition}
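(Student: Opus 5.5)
We follow Córdoba's $L^4$ method for sectorial square functions, as refined by Carbery. The plan is to compute $\|(\sum|S_n^jf|^2)^{1/2}\|_4^2=\|\sum_{n,j}|S_n^jf|^2\|_2$ and expand the square. Each $|S_n^jf|^2$ has Fourier support in the difference set $S_n^j-S_n^j$. We then control the overlap of the sets $\{S_n^j-S_{n'}^{j'}\}$, or more precisely use a Córdoba--Fefferman type duality: for a weight $w\ge0$ with $\|w\|_2=1$,
\[
\int \sum_{n,j}|S_n^jf|^2 w \le \int \sum_{n,j}|\tilde S_n^j f|^2 \, \mathcal M w ,
\]
where $\mathcal M$ is a maximal operator associated with the geometry of the trapezoids. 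Two steps are then needed: an $L^2$ bound for $\mathcal M$, and an $L^4$ bound for the square function with smooth multipliers adapted to the $S_n^j$. To organize this, we first split the trapezoids into finitely many subfamilies. We separate $n$ modulo a fixed integer and $j$ modulo a fixed integer so that, within a subfamily, the sets $S_n^j+S_{n'}^{j'}$ have bounded overlap. This is the key geometric input from Córdoba: $S_n^j$ lies in the sector with slopes in $[2^{n}(1+(j-1)2^{-\ell}),2^n(1+j2^{-\ell})]$ (in the $\xi_1/\xi_2$ sense), truncated to $\xi_2\in[1/2,2]$.

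\textbf{Step 1 (almost orthogonality of sums).} We write $\|\sum_{n,j}(S_n^jf)^2\|_2$, or equivalently $\|(\sum|S_n^jf|^2)^{1/2}\|_4^2\le \|\sum_{(n,j),(n',j')}S_n^jf\,\overline{S_{n'}^{j'}f}\|_2$ after the standard reduction. The Fourier support of $S_n^jf\cdot S_{n'}^{j'}f$ lies in $S_n^j+S_{n'}^{j'}$, a region with $\xi_2\in[1,4]$ whose angular position is determined by the two slopes. Because the $\xi_2$-range is compact and bounded away from $0$, the sumsets behave like sums of sectors in an annulus, and Córdoba's argument shows that
\[
\sum_{(n,j),(n',j')}\chi_{S_n^j+S_{n'}^{j'}}\lesssim \ell
\]
(or $\lesssim\log$ of the number of directions). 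The slopes at scale $n$ differ multiplicatively by $1+2^{-\ell}$, so the directions form a lacunary-times-uniform set of $\sim 2^\ell$ per octave. The overlap count for such a configuration grows at most like $\ell$ (each dyadic scale of angular separation contributes $O(1)$). Plancherel then gives
\[
\|\cdot\|_4^2\lesssim \ell^{1/2}\Big\|\Big(\sum_{(n,j),(n',j')}|S_n^jf\,S_{n'}^{j'}f|^2\Big)^{1/2}\Big\|_2 \le \ell^{1/2}\Big\|\sum_{n,j}|S_n^jf|^2\Big\|_2 ,
\]
which closes only circularly. We therefore instead pass to smooth pieces: we replace the sharp $S_n^j$ by smooth multipliers $\phi_n^j$ adapted to slightly enlarged trapezoids, which is allowed because the sharp projections can be dominated using a vector-valued directional Hilbert transform argument. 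Doing this leads to the bound $\lesssim \ell^{1/2}\|(\sum_{n,j}|\phi_n^j * f|^2)^{1/2}\|_4^2$.

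\textbf{Step 2 (smooth square function via Kakeya-type maximal function).} For smooth pieces we dualize. By Corollary~\ref{eccentricity result}-type kernel bounds, $|\phi_n^j*f|^2\lesssim \mathcal A_n^j(|f|^2)$ pointwise, where $\mathcal A_n^j$ averages over rectangles of eccentricity about $2^\ell$ (after normalizing by $2^n$) pointing in the normal direction of $S_n^j$. Pairing with $w$ gives $\int\sum|\phi_n^jf|^2w\lesssim\int (\sum_{n,j}|\tilde\phi_n^jf|^2)\,\mathcal M_\ell w$, where $\mathcal M_\ell$ is the maximal function over rectangles with those directions and eccentricities. These directions form a set that is lacunary in $n$ and, within each octave, $2^\ell$ uniformly spaced with eccentricity $2^{\ell}$. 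By the Córdoba/Strömberg bound for the Kakeya maximal function with $N=2^\ell$ directions, combined with Sjögren--Sjölin/Carbery's bound for lacunary directions (iterating with the strong maximal operators $\mathfrak m_t$ of Section~\ref{sec:dominationbystrongmax}), we obtain $\|\mathcal M_\ell\|_{2\to2}\lesssim \ell^{C}$. Finally, the $L^2$ bound of $\sum|\tilde\phi_n^jf|^2$ in terms of $\|f\|_4^2$ reduces, via Littlewood--Paley in the lacunary parameter $n$ (Rubio de Francia / Carbery's lacunary sector theorem), to a bounded quantity. Combining the factors yields $\ell^\beta$.

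\textbf{Main obstacle.} The hardest step is controlling the $L^2$ norm of the Kakeya-type maximal operator $\mathcal M_\ell$ with only polylogarithmic loss in $\ell$, uniformly across the lacunary family $n\in\Z$. For this we would first try Carbery's iteration: compose a lacunary directional maximal operator, which is bounded uniformly, with the Córdoba $N$-direction bound $\log^2N\sim\ell^2$.
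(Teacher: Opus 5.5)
Your proposal has genuine gaps: Step~1 rests on a false overlap claim, and Step~2 is circular, so the estimate is never actually proved.

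\textbf{Step~1.} The overlap claim is false, and even if it held, the argument runs in the wrong direction. It already fails for $n=n'=0$. The $S_0^j$ are sectors of angular width $\approx 2^{-\ell}$ and length $\approx 1$, so $|S_0^j+S_0^{j'}|\approx 2^{-\ell}(1+|j-j'|)$. These $2^{2\ell}$ sumsets have total area $\approx 2^{2\ell}$, but they all lie in $X_0+X_0$, which has area $O(1)$. The overlap function therefore has average $\gtrsim 2^{2\ell}$ there, not $O(\ell)$, and splitting $j$ or $n$ modulo a fixed integer does not change this.

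Moreover, for $\widehat f$ supported in $\bigcup S_n^j$ one has $\sum_{(n,j),(n',j')}S_n^jf\,\overline{S_{n'}^{j'}f}=|f|^2$. So your ``standard reduction'' already asserts the proposition, and the Plancherel-plus-overlap bound you apply to it is the reverse (Fefferman--C\'ordoba) square function inequality.

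In the paper, bounded overlap is obtained only after refining one factor, and only across well-separated scales:
\begin{itemize}
\item $n$ is split modulo $\ell$ (costing a factor $\ell$). Then for $n_1>n_2$ the $\xi_1$-extent of $S^k_{n_2\ell}$ is $\lesssim 2^{n_1\ell-\ell}$, while the slices $S^{j,\alpha}_{n_1\ell}$ of height $2^{-\ell}$ sit at $\xi_1$-positions $\approx 2^{n_1\ell-\ell}$ apart. Hence $\sum_\alpha\chi_{S^{j,\alpha}_{n_1\ell}+S^k_{n_2\ell}}\lesssim 1$.
\item The cross terms are then bounded by $\|(\sum|S^{j,\alpha}_{n_1\ell}f|^2)^{1/2}\|_4^2\,\|(\sum|S^k_{n_2\ell}f|^2)^{1/2}\|_4^2$. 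The first factor is controlled with no loss in $\ell$ by Lemma~\ref{summation lemma} (applied twice, with only the strong maximal function) and Littlewood--Paley.
\item The same-scale terms are handled by Proposition~\ref{directional square function estimate} with $N=2^\ell$, plus Littlewood--Paley over the $X_n$.
\item With $A=\|(\sum_{n,j}|S^j_{n\ell}f|^2)^{1/2}\|_4$, the bootstrap $A^4\lesssim \ell^{4\beta_1}\|f\|_4^4+\|f\|_4^2A^2$ closes the argument.
\end{itemize}

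\textbf{Step~2.} Dominating $|\phi_n^j*f|^2$ by rectangle averages of $|\tilde\phi_n^j*f|^2$ and pairing with $w$ only gives
$\|(\sum|\phi_n^j*f|^2)^{1/2}\|_4^2\lesssim \|\mathcal M_\ell\|_{2\to 2}\,\|(\sum|\tilde\phi_n^j*f|^2)^{1/2}\|_4^2$.
This is the same square function for an enlarged family. Your last step, bounding $\|(\sum_{n,j}|\tilde\phi_n^j*f|^2)^{1/2}\|_4$ by $\|f\|_4$ via Littlewood--Paley in $n$ and Rubio de Francia/Carbery, is the proposition itself.

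Littlewood--Paley removes only the lacunary index $n$. The $2^\ell$ sectors inside each $X_n$ are neither parallel, as Rubio de Francia's theorem requires, nor lacunary of bounded order, as Carbery's requires. Controlling them with polylogarithmic loss is exactly C\'ordoba's angular $L^4$ estimate. That estimate has to be proved by radial subdivision and biorthogonality, uniformly in the aperture (Proposition~\ref{directional square function estimate} and Appendix~\ref{ap:proofds}).

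For the same reason, the obstacle you single out (an $L^2$ bound for a Kakeya operator over all directions at all scales) is not the real difficulty. In the paper the scales are decoupled by Littlewood--Paley and the cross-scale biorthogonality. The $N$-direction maximal bound (Lemma~\ref{maximal function result}) enters only inside a single octave. Sharp cutoffs are handled directly by Lemma~\ref{summation lemma}, so no sharp-to-smooth reduction is needed.
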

Let $0 < \alpha < 2\pi$. Consider the sector 
$$
\Omega_{\alpha}=\{\xi\in \R^2: 0 < \arg \xi <\alpha\}, 
$$
and divide it into $N$ equal angular sub-sectors $\{\Theta_j^{\alpha}\}_{j=1}^N := \{\xi\in \R^2: \frac{(j-1)\alpha}{N} < \arg \xi < \frac{j\alpha}{N}\}$. The proof of Proposition \ref{square function estimate} requires the following result for an angular square function. 
\begin{proposition}\label{directional square function estimate}
There exists $\beta_1 > 0$ such that for every $N>0$,
$$
\Big\|\Big(\sum_{j=1}^N|\Theta_j^{\alpha}f|^2\Big)^{1/2}\Big\|_4 \le C (\log N)^{\beta_1}\|f\|_4,
$$
where the implicit constant is independent of $N$ and $\alpha$.
\end{proposition}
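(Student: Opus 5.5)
Our plan is Córdoba's classical argument for the angular square function, with the sector decomposition organized dyadically so that the loss is only a power of $\log N$. First we reduce to $N=2^m$ and to $\alpha\le \pi/2$; a general $\alpha<2\pi$ splits into at most four sectors of aperture $\le\pi/2$, and the pieces are handled separately by the $L^4$ boundedness of the sector projections (these are products of half-plane multipliers, hence bounded by Hilbert transforms). Since a rotation does not affect the estimate, we may take $\Omega_\alpha$ with one edge on the positive $\xi_1$-axis.

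Next we smooth the sectors. Fix smooth angular cutoffs $\widetilde\Theta_j$ adapted to slightly enlarged sub-sectors. The sharp projections $\Theta_j^\alpha$ and the smooth ones differ by a vector-valued family of angular projections. By the Rubio de Francia / Córdoba–Fefferman vector-valued estimate for sector (half-plane) multipliers, this difference costs at most a factor $C(\log N)^{c}$ in $L^4(\ell^2)$. This step is where we expect to use a Nagel–Stein–Wainger or Córdoba-type $(\log N)^{c}$ bound for the maximal function over the $N$ directions of the sector edges, via the standard duality argument of Córdoba and Fefferman.

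For the smooth square function we argue in the usual $L^4$ way: $\|(\sum_j|\widetilde\Theta_jf|^2)^{1/2}\|_4^2=\|\sum_j|\widetilde\Theta_jf|^2\|_2$, and $\widehat{|\widetilde\Theta_jf|^2}$ is supported in $\widetilde\Theta_j-\widetilde\Theta_j$. A naive overlap count fails here, because the difference sets of thin sectors all contain neighbourhoods of the origin. We therefore add a Littlewood–Paley decomposition in the radial variable: write $f=\sum_k P_kf$ with annuli $|\xi|\sim 2^k$. Each $\widetilde\Theta_jP_kf$ is then supported in an essentially rectangular region $\tau_{j,k}$ of size $2^k\times 2^k/N$. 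By Córdoba's geometric lemma, the sets $\tau_{j,k}+\tau_{j',k}$ with $|j-j'|\sim 2^s$ have bounded overlap, uniformly in $s$, for each fixed angular separation scale $s$. Summing over the $\log N$ scales $s$ and applying Plancherel gives
\[
\Big\|\sum_{j}|\widetilde\Theta_jP_kf|^2\Big\|_2\lesssim \log N\,\Big\|\Big(\sum_j|\widetilde\Theta_j P_kf|^2\Big)\Big\|_2 \ \text{-type bounds},
\]
which reduces matters to a Kakeya/Nikodym maximal function over rectangles of eccentricity $N$. Its $L^2$ norm is $O(\log N)$ by Córdoba's theorem. By duality against $w\in L^2$, we get $\int\sum_j|\widetilde\Theta_jP_kf|^2w\lesssim (\log N)^{c}\int|P_kf|^2\mathcal M_Nw$. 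The radial Littlewood–Paley pieces are then recombined using the $L^4$ Littlewood–Paley inequality.

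We expect the main obstacle to be this last step: controlling $\sum_j|\widetilde\Theta_jf|^2$ in $L^2$ by duality against the Córdoba–Kakeya maximal operator $\mathcal M_N$ with only polylogarithmic loss, while keeping every constant independent of $\alpha$. Our first attempt will follow Córdoba's 1982 argument, which yields $\|\mathcal M_N\|_{L^2\to L^2}\lesssim\log N$; combined with the pointwise bound $|\widetilde\Theta_j g|^2\lesssim \frac{\chi_{R_j}}{|R_j|}*|g|^2$ (by Cauchy–Schwarz on the kernel) and the overlap count, this produces the exponent $\beta_1$.
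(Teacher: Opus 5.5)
There is a genuine gap, and it is in the step that carries the content of C\'ordoba's argument. Your geometric claim is false. At radial scale $2^k$, the set $\tau_{j,k}+\tau_{j',k}$ with $|j-j'|\sim 2^s$ contains a parallelogram spanned by two segments of length $2^k$ whose directions differ by $\approx 2^s\alpha/N$, so its area is $\gtrsim 4^k2^s\alpha/N$. There are $\approx N2^s$ such pairs, and all these sums lie in a region of area $\lesssim 4^k\alpha$, so their average overlap is $\gtrsim 4^s$, i.e.\ up to $N^2$.

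Bounded overlap of $\{\tau_j+\tau_{j'}\}_{j,j'}$ would in any case give the reverse inequality $\|\sum_j F_j\|_4\lesssim\|(\sum_j|F_j|^2)^{1/2}\|_4$, not the forward one. Also, the inequality you display has the same quantity on both sides.

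The missing idea is the one the paper takes from C\'ordoba:
\begin{itemize}
\item expand $\|(\sum_j|\Theta_j^{\alpha}f|^2)^{1/2}\|_4^4=\sum_{j,k}\int|\Theta_j^{\alpha}f\,\Theta_k^{\alpha}f|^2$;
\item split by separation: $|j-k|>N^{1/2}$, $|j-k|\sim 2^{-\nu}N^{1/2}$, and $j=k$;
\item cut each sector into slices $\Theta^{\alpha}_{j,\gamma}$ of length $\approx\max\{N^{-1/2},|j-k|^{-1}\}$ in $\xi_1\in[1,2]$.
\end{itemize}
Only for such short slices do the sets $\Theta^{\alpha}_{j,\gamma_1}+\Theta^{\alpha}_{k,\gamma_2}$ have bounded overlap in $(\gamma_1,\gamma_2)$ for fixed $(j,k)$. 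This converts the off-diagonal terms into $\|\sum_{j,\gamma}|\Theta^{\alpha}_{j,\gamma}f|^2\|_2^2$, a square function over short pieces.

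The duality step fails for a related reason. Cauchy--Schwarz on the kernels gives
\[
\int\sum_j|\widetilde\Theta_jP_kf|^2w\lesssim\int|P_kf|^2\sum_j\Big(\frac{\chi_{-R_j}}{|R_j|}*w\Big),
\]
which is a \emph{sum} of $N$ averages over differently oriented rectangles. Already for $w\equiv1$ this gives $N\|P_kf\|_2^2$, against the Plancherel bound $\|P_kf\|_2^2$. No overlap count turns this sum into $\mathcal M_Nw$.

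Replacing the sum by a supremum requires orthogonality, i.e.\ the weighted lattice lemma (Lemma~\ref{summation lemma}). That lemma needs congruent parallel rectangles, and your full-length pieces $\tau_{j,k}$ are not of that type. The slicing is again what makes it applicable:
\begin{itemize}
\item within a block of $N^{1/2}$ consecutive sectors, slices of length $N^{-1/2}$ are essentially parallel strips of width $\alpha/N$, which gives a directional maximal function;
\item the blocks themselves are essentially axis-parallel $N^{-1/2}\times\alpha N^{-1/2}$ rectangles, which gives the strong maximal function.
\end{itemize}
Applying the lemma twice yields $\int|f|^2\,\mathfrak{m}^s\circ M^{\alpha,s}_{N^{1/2}}(\omega)$. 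Taking $s=1+1/\log N$ and using Lemma~\ref{maximal function result} then gives the power of $\log N$.

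This same structure is where uniformity in $\alpha$ comes from, which your proposal leaves open. The dual rectangles of $\tau_{j,k}$ have eccentricity $N/\alpha$, not $N$, so your $\mathcal M_N$ does not dominate their averages. A Kakeya-type bound would therefore cost $\log(N/\alpha)$, not $\log N$.
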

Proposition \ref{directional square function estimate} is proved by employing the strategy used in \cite[Theorem 1]{Cor1982}. The key point is that a direct application of \cite[Theorem 1]{Cor1982} gives an estimate which depends on $\alpha$ and we need a uniform estimate in the angle $\alpha$. This requires a slight twist in the proof of  \cite[Theorem 1]{Cor1982}.   We will provide the main steps of the proof in the Appendix \ref{ap:proofds} for completeness of this paper. Applying Proposition {\ref{directional square function estimate}}, we can prove Proposition {\ref{square function estimate}}. This is shown in the next Subsection \ref{sub:propds}.

\subsection{Proof of Proposition \ref{square function estimate}}
\label{sub:propds}

We first decompose the square function into $\ell$ parts
$$
\Big(\sum_{n\in \Z}\sum_{j=1}^{2^{\ell}}|S_n^jf|^2\Big)^{1/2} = \Big(\sum\limits_{\gamma=0}^{\ell-1}\sum\limits_{n,j}|S_{n\ell+\gamma}^jf|^2\Big)^{1/2}\le \sum\limits_{\gamma=0}^{\ell-1}\Big(\sum\limits_{n,j}|S_{n\ell+\gamma}^jf|^2\Big)^{1/2}.$$
By symmetry, we only need to consider the case $\gamma =0$,
\begin{align}
{\label{estimate1}}
\notag \Big\|\Big(\sum\limits_{n,j}|S_{n\ell}^jf|^2\Big)^{1/2}\Big\|_4^4 &= \int_{\mathbb{R}^2} \Big(\sum\limits_{n,j}|S_{n\ell}^jf|^2\Big)^2 \, dx  \\
&= \sum\limits_{n}\int_{\mathbb{R}^2} \Big(\sum\limits_{j}|S_{n \ell}^jf|^2\Big)^2 \, dx + 2  \sum\limits_{n_1 > n_2}\sum\limits_{j,k}\int_{\mathbb{R}^2}|S_{n_1\ell}^jf|^2|S_{n_2\ell}^kf|^2 \, dx. 
\end{align}
For the first term in \eqref{estimate1} we use Proposition \ref{directional square function estimate} and the classical Littlewood--Paley theorem, so that there exists $\gamma >0 $ such that
\begin{align}
\label{estimate7} 
\sum\limits_{n \in \notag\mathbb{Z}}\int_{\mathbb{R}^2} \Big(\sum_{j=1}^{2^{\ell}}|S_{n\ell}^jf|^2\Big)^2 \, dx = \sum\limits_{n \in \mathbb{Z}}\int_{\mathbb{R}^2} \Big(\sum_{j=1}^{2^{\ell}}|S_{n\ell}^jX_nf|^2\Big)^2 \, dx & \lesssim 
\ell^{4\beta_1} \sum\limits_{n \in \mathbb{Z}}\int_{\mathbb{R}^2}|X_nf|^4 \, dx\\
&
\lesssim  \ell^{4\beta_1} \int_{\mathbb{R}^2}\Big(\sum\limits_{n \in \mathbb{Z}}|X_nf|^2 \Big)^2  \, dx
\lesssim   \ell^{4\beta_1} \|f\|_4^4,
\end{align}
where the implicit constant is independent of $\ell$.
For the second term in \eqref{estimate1}, we make a further decomposition on $S_{n_1\ell}^j$. For $n \in \mathbb{Z}$ and $0 < \alpha \le \frac{3}{2}2^\ell$, define
$$
S_{n}^{j,\alpha}:=\Big\{\xi \in S_{n}^j,\, \frac{1}{2}+ (\alpha-1) 2^{-\ell} < \xi_2 < \frac{1}{2}+ \alpha 2^{-\ell}\Big\},
$$
and write ${S_{n}^{\alpha}} := \cup_{j} S_{n}^{j,\alpha}$. It is not difficult to verify that there exists $C>0$ independent of $n_1, n_2,j,k,\ell$ such that 
$$\sum\limits_{\alpha}\chi_{S_{n_1\ell}^{j,\alpha}+S_{n_2\ell}^{k}} \le C.
$$
holds for every $n_1, n_2 \in \mathbb{Z}$, $1 \le j,k \le 2^{\ell}$ and $n_1 > n_2$. 

Thus,
\begin{align}
\notag \sum\limits_{n_1 > n_2}\sum\limits_{j,k}\int_{\mathbb{R}^2}|S_{n_1\ell}^jf|^2|S_{n_2\ell}^kf|^2 \, dx & = \sum\limits_{n_1 > n_2}\sum\limits_{j,k}\int_{\mathbb{R}^2}|\sum\limits_{\alpha}S_{n_1\ell}^{j,\alpha}f| ^2|S_{n_2\ell}^kf|^2 \, dx \nonumber\\
& = \sum\limits_{n_1 > n_2}\sum\limits_{j,k}\int_{\mathbb{R}^2}|\sum\limits_{\alpha} \widehat{S_{n_1\ell}^{j,\alpha}f} * \widehat{S_{n_2\ell}^kf}|^2 \, dx \nonumber \\
& \lesssim \sum\limits_{n_1 > n_2}\sum\limits_{j,k}\sum\limits_{\alpha}\int_{\mathbb{R}^2}| \widehat{S_{n_1\ell}^{j,\alpha}f} * \widehat{S_{n_2\ell}^kf}|^2 \, dx \nonumber \\
& \le \int_{\mathbb{R}^2}\sum\limits_{n_1,j,\alpha}|S_{n_1\ell}^{j,\alpha}f|^2\sum\limits_{n_2,k}|S_{n_2\ell}^kf|^2 \, dx \nonumber \\
& \le \Big\|\Big(\sum\limits_{n_1,j,\alpha}|S_{n_1 \ell}^{j,\alpha}f|^2\Big)^{1/2} \Big\|_4^2\Big\|\Big(\sum\limits_{n_2,k}|S_{n_2\ell}^kf|^2\Big)^{1/2}\Big\|_4^2. \label{estimate2}
\end{align}
Define $\mathfrak{m}^s(w):=(\mathfrak{m}(w^s))^{1/s}$, where $\mathfrak{m}$ is the usual strong maximal function, and choose a suitable positive function $w$ with $\|w\|_2=1$. We have, by applying twice Lemma \ref{summation lemma} with $s = \frac{3}{2}$,  
\begin{align}
\notag\Big\|\Big(\sum\limits_{n_1,j,\alpha}|S_{n_1 \ell}^{j,\alpha}f|^2\Big)^{1/2}\Big\|_4^2 & = \int_{\mathbb{R}^2} \sum\limits_{n_1}\sum\limits_{j,\alpha}|S_{n_1 \ell}^{j,\alpha}S_{n_1 \ell}^{\alpha}X_{n_1}f|^2 w \, dx   \\
& \lesssim \int_{\mathbb{R}^2} \sum\limits_{n_1}\sum\limits_{\alpha}|S_{n_1 \ell}^{\alpha} X_{n_1 \ell}f|^2 \mathfrak{m}^{3/2}(w) \, dx \nonumber \\
& \lesssim \int_{\mathbb{R}^2} \sum\limits_{n_1}|X_{n_1 \ell}f|^2 \mathfrak{m}^{3/2} \circ \mathfrak{m}^{3/2}(w) \, dx \nonumber \\
& \le \Big\|\Big(\sum\limits_{n_1}|X_{n_1 \ell}f|^2\Big)^{1/2}\Big\|_4^2\|\mathfrak{m}^{3/2} \circ\mathfrak{m}^{3/2}(w)\|_2 \nonumber \\
& \le \|f\|_4^2, \label{estimate3}
\end{align}
where we used the Littlewood--Paley theorem and the boundedness of the strong maximal function. We illustrate the strategy of the summation above in Figure \ref{summation over j and alpha}: in each of the summations in $j$ and $\alpha$, each trapezoid can be embedded into a one-parameter lattice of congruent strips parallel to the sides. 

\begin{center}
\begin{figure}
\begin{minipage}{0.25\linewidth}
\centering
\begin{tikzpicture}[scale=0.5]
\draw[->] (-0.2,0) --(6.5,0) node[right] {$\xi_1$};
\draw[->] (0,-0.2) --(0,6.5) node[above] {$\xi_2$};
\draw (0.75,1.5) --(1.5,1.5);
\draw (3,6) --(6,6);
\draw [dashed] (0,1.5) --(0.75,1.5);
\draw [dashed] (0,6) -- (3,6);
\filldraw (0,6) circle (.05);
\filldraw (0,6) node[left] {$2$};
\filldraw (0,1.5) circle (.05);
\filldraw (0,1.5) node[left] {$\frac{1}{2}$};
\draw (0.75,1.5) -- (3,6);
\draw (1.5,1.5) -- (6,6);
\draw [dashed] (0.9375,1.5) -- (3.75,6);
\draw [dashed] (1.125,1.5) -- (4.5,6);
\draw [dashed] (1.3125,1.5) -- (5.25,6);
\draw [dashed] (1.125,2.25) -- (2.25,2.25);
\draw [dashed] (1.5,3) -- (3,3);
\draw [dashed] (1.875,3.75) -- (3.75,3.75);
\draw [dashed] (2.25,4.5) -- (4.5,4.5);
\draw [dashed] (2.625,5.25) -- (5.25,5.25);
\node at (3.2,-1.3) {\scriptsize{Sets $\{S_{-1}^{j,\alpha}\}_{j,\alpha}$ with $1 \le j \le 4$}};
\node at (1.3,-2.0) {\scriptsize{and $1 \le \alpha \le 6$}};
\end{tikzpicture} 
\end{minipage}
$\xrightarrow{\text{\scriptsize{Sum over $j$}}}$
\begin{minipage}{0.25\linewidth}
\centering
\begin{tikzpicture}[scale=0.5]
\draw[->] (-0.2,0) --(6.5,0) node[right] {$\xi_1$};
\draw[->] (0,-0.2) --(0,6.5) node[above] {$\xi_2$};
\draw (0.75,1.5) --(1.5,1.5);
\draw (3,6) --(6,6);
\draw [dashed] (0,1.5) --(0.75,1.5);
\draw [dashed] (0,6) -- (3,6);
\filldraw (0,6) circle (.05);
\filldraw (0,6) node[left] {$2$};
\filldraw (0,1.5) circle (.05);
\filldraw (0,1.5) node[left] {$\frac{1}{2}$};
\draw (0.75,1.5) -- (3,6);
\draw (1.5,1.5) -- (6,6);
\draw [dashed] (1.125,2.25) -- (2.25,2.25);
\draw [dashed] (1.5,3) -- (3,3);
\draw [dashed] (1.875,3.75) -- (3.75,3.75);
\draw [dashed] (2.25,4.5) -- (4.5,4.5);
\draw [dashed] (2.625,5.25) -- (5.25,5.25);
\node at (3.2,-1.3) {\scriptsize{Sets $\{S_{-1}^{\alpha}\}_{\alpha}$ with $1 \le \alpha \le 6$}};
\node at (1.3,-2.0){};
\end{tikzpicture} 
\end{minipage}
$\xrightarrow{\text{\scriptsize{Sum over $\alpha$}}}$
\begin{minipage}{0.25\linewidth}
\centering
\begin{tikzpicture}[scale=0.5]
\draw[->] (-0.2,0) --(6.5,0) node[right] {$\xi_1$};
\draw[->] (0,-0.2) --(0,6.5) node[above] {$\xi_2$};
\draw (0.75,1.5) --(1.5,1.5);
\draw (3,6) --(6,6);
\draw [dashed] (0,1.5) --(0.75,1.5);
\draw [dashed] (0,6) -- (3,6);
\filldraw (0,6) circle (.05);
\filldraw (0,6) node[left] {$2$};
\filldraw (0,1.5) circle (.05);
\filldraw (0,1.5) node[left] {$\frac{1}{2}$};
\draw (0.75,1.5) -- (3,6);
\draw (1.5,1.5) -- (6,6);
\node at (3.2,-1.3) {\scriptsize{Set $X_{-1}$}};
\node at (1.3,-2.0){};
\end{tikzpicture} 

\end{minipage}
\caption{Summation over $j$ and $\alpha$}
\label{summation over j and alpha}
\end{figure}
\end{center}

Combining estimates \eqref{estimate1}, \eqref{estimate7}, \eqref{estimate2} and \eqref{estimate3}, we have
\begin{equation}
\label{eq:boots}
\Big\|\Big(\sum\limits_{n,j}|S_{n\ell}^jf|^2\Big)^{1/2}\Big\|_4^4 \lesssim \ell^{4\beta_1} \|f\|_4^4 +\|f\|_4^2\Big\|\Big(\sum\limits_{n_2,k}|S_{n_2\ell}^kf|^2\Big)^{1/2}\Big\|_4^2.
\end{equation} 
A bootstrap argument yields that there exists $\beta > 0$, such that
$$
\Big\|\Big(\sum\limits_{n,j}|S_{n\ell}^jf|^2\Big)^{1/2}\Big\|_4 \le \ell^{\beta}\|f\|_4.
$$
Indeed, letting $A_{\ell}(f):=\Big\|\Big(\sum\limits_{n,j}|S_{n\ell}^jf|^2\Big)^{1/2}\Big\|_4$, inequality \eqref{eq:boots} says that
$$
A_{\ell}(f)^4\lesssim \ell^{4\beta_1} \|f\|_4^4 +\|f\|_4^2A_{\ell}(f)^2.
$$
Solving the inequality $A_{\ell}(f)^4-CA_{\ell}(f)^2\|f\|_4^2-C\ell^{4\beta_1}\|f\|_4^4\le 0$, we see that the positive root gives
$$
A_{\ell}(f)^2\lesssim  \|f\|_4^2+\ell^{2\beta_1}\|f\|_4^2.
$$
For large $\ell$, the dominant term is the $\ell^{2\beta_1}$ term, thus
$$
A_{\ell}(f)^2\lesssim \ell^{2\beta}\|f\|_4^2
$$
for some $\beta>0$.
This completes the proof of Proposition~\ref{square function estimate}. 
 
\subsection{Proof of Proposition \ref{square function Gnu}, \eqref{L4Gnuk}} 

In order to prove the estimate in \eqref{L4Gnuk}, we require $L^p$ estimates of the Kakeya maximal function defined by 
$$
\mathcal{K}_Nf(x):= \sup_{x \in R_N}\frac{1}{|R_N|}\int_{R_N}|f(y)|\,dy,
$$
where the supremum is taken over all rectangles $R_N$ with eccentricity $N$. The following estimate holds. 

\begin{lemma}\cite[Theorem 5.3.5]{Grafakosmodern}
There exists $C>0$ independent of $N$ such that
$$
\|\mathcal{K}_Nf\|_{2} \le C \log N\|f\|_2.
$$   
\end{lemma}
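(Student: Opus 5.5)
Since the statement is quoted from \cite[Theorem 5.3.5]{Grafakosmodern}, we could simply cite it. For completeness, the plan below follows the Córdoba--Strömberg strategy; I expect it to give the stated bound, although the main step is only sketched.

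\textbf{Step 1: reduce to finitely many directions.} A rectangle of eccentricity $N$ and arbitrary orientation is contained in a rectangle of comparable area and eccentricity $\approx N$ whose long side points in one of the directions $\omega_k=(\cos(\pi k/N),\sin(\pi k/N))$, $k=1,\dots,N$. Hence
\[
\mathcal K_Nf\lesssim \sup_{1\le k\le N} M_k|f|,
\]
where $M_k$ is the maximal average over rectangles with long side parallel to $\omega_k$, eccentricity $\approx N$, and arbitrary scale.

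Next, majorize each rectangle average by a smooth average. Let $\Phi\ge0$ be a Schwartz bump adapted to $[-1,1]^2$, and set
\[
\Phi_{k,r}(x)=(N r^2)^{-1}\,\Phi\big(x\cdot\omega_k/(Nr),\;x\cdot\omega_k^\perp/r\big).
\]
Then $M_k|f|\lesssim \sup_{r>0}|f|*\Phi_{k,r}$. Since $\widehat{\Phi_{k,r}}$ is concentrated on the dual rectangle (short side $1/(Nr)$ along $\omega_k$, long side $1/r$ along $\omega_k^\perp$), it is natural to work on the Fourier side.

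\textbf{Step 2: angular decomposition of frequency space.} Around $\omega_k^\perp$, split frequency space into the sector $\Gamma_{k,0}$ of aperture $\approx 1/N$ and the dyadic sectors $\Gamma_{k,m}$ at angle $\approx 2^m/N$ from $\omega_k^\perp$, for $1\le m\le \log N$. Write $f=\sum_m P_{k,m}f$ with smooth angular projections. This gives
\[
\sup_k\sup_r |f*\Phi_{k,r}|\le \sum_{m=0}^{\log N}\sup_k\sup_r |(P_{k,m}f)*\Phi_{k,r}|,
\]
so it suffices to show that each of the $\approx\log N$ terms is bounded on $L^2$ uniformly in $m$ and $N$.

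\textbf{Step 3: bound each angular piece.} For fixed $m$, each sector of aperture $2^m/N$ is met by only $\approx 2^m$ of the directions $k$. I would use two estimates.
\begin{itemize}
\item \emph{Near $\omega_k^\perp$.} On $\Gamma_{k,m}$ the kernel $\Phi_{k,r}$ acts essentially like a one-dimensional average along $\omega_k$ at scale $Nr$. So $\sup_r$ of it is dominated by a strong maximal function in the rotated coordinates, uniformly in $k$.
\item \emph{Away from $\omega_k^\perp$.} For $m\ge1$, the multiplier $\widehat{\Phi_{k,r}}$ on $\Gamma_{k,m}$ decays like $(2^m)^{-A}$. So $\sup_r$ can be replaced by an $\ell^2$ sum over dyadic $r$ at the cost of a factor $2^{-m}$, which beats the $2^m$ overlap.
\end{itemize}
Then bound $\sup_k$ by $\bigl(\sum_k|\cdot|^2\bigr)^{1/2}$. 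Plancherel, together with the bounded overlap of the sectors $\Gamma_{k,m}$ after the gain from the second estimate, gives an $O(1)$ bound for each $m$. Summing over $m\le\log N$ yields $C\log N$.

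\textbf{Main obstacle.} The delicate part is Step 3 for $m=0$ and small $m$. There the multiplier does not decay and the $\sup_r$ cannot be removed by orthogonality alone. I would treat these pieces by combining the boundedness of the strong maximal function in rotated coordinates with a Littlewood--Paley square function in the radial variable (a Chang--Wilson--Wolff or Rubio de Francia type bound). I expect the uniformity in $k$ of this step to be the crux.
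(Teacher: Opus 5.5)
The paper gives no proof of this lemma. It quotes it from Grafakos's book, where the bound comes from a geometric (C\'ordoba--Str\"omberg type) covering argument. Your outline is Fourier-analytic instead, and its central step, Step 3, fails as written.

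\emph{The decay claim is false.} Since $\widehat{\Phi_{k,r}}(\xi)=\widehat\Phi(Nr\,\xi\cdot\omega_k,\ r\,\xi\cdot\omega_k^\perp)$ and $|\xi\cdot\omega_k|\approx 2^m|\xi|/N$ on $\Gamma_{k,m}$, the multiplier there is $\approx\widehat\Phi(2^m r|\xi|,r|\xi|)$. This equals $\int\Phi+O(2^m r|\xi|)$ for all $r\ll 2^{-m}/|\xi|$, and only decays like $(2^m r|\xi|)^{-A}$ once $r\gtrsim 2^{-m}/|\xi|$. In particular, the dyadic $\ell^2$ sum over $r$ you propose is infinite at every point of $\Gamma_{k,m}$.

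\emph{The $\ell^2$-in-$k$ step cannot give a uniform bound, however the supremum in $r$ is handled.} By Lebesgue differentiation, $\sup_r|g*\Phi_{k,r}|\ge(\int\Phi)\,|g|$ a.e., hence
\[
\Big\|\Big(\sum_k\sup_r|(P_{k,m}f)*\Phi_{k,r}|^2\Big)^{1/2}\Big\|_2^2\gtrsim\sum_k\|P_{k,m}f\|_2^2\approx 2^m\|f\|_2^2
\]
for typical $f$, precisely because every frequency lies in the sectors $\Gamma_{k,m}$ of $\approx 2^m$ directions. So this route gives $2^{m/2}$ per piece and $N^{1/2}$ in total, not $\log N$.

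\emph{Your diagnosis of the obstacle is inverted.} The piece $m=0$ is the easy one. The aperture-$1/N$ sectors have bounded overlap, and for fixed $k$ the supremum in $r$ is dominated by the strong maximal function in $(\omega_k,\omega_k^\perp)$-coordinates, uniformly in $k$. So $\ell^2$ in $k$ plus Plancherel suffices there. For $m\ge1$, your first bullet controls each $k$ separately but says nothing about $\sup_k$. The supremum of $N$ rotated strong maximal functions is essentially $\mathcal K_N$ again.

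The missing idea is to take the orthogonality over frequency blocks rather than over directions. Decompose $f$ into pieces $f_I$ supported in dyadic angular sectors $I$, chosen independently of $k$. For each $k$, group these pieces by a Whitney decomposition relative to $\pm\omega_k^\perp$. A sector $I$ of aperture $2^m/N$ is then used by the $\approx 2^m$ directions $k$ with $\operatorname{dist}(I,\pm\omega_k^\perp)\approx|I|$, and for all of them $|\xi\cdot\omega_k|\approx |I|\,|\xi|$ on $\operatorname{supp}\widehat{f_I}$.

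Consequently, every such average acts on $f_I$ like one and the same $I$-adapted low-pass filter at frequency scale $(2^m r)^{-1}$. The part of $f_I$ below that scale is essentially constant on each of these rectangles. It is controlled, after an anisotropic rescaling, by a Peetre/Hardy--Littlewood maximal function adapted to $I$. The part above that scale is handled by kernel bounds with decay $(2^m r|\xi|)^{-A}$ together with a Littlewood--Paley square function. All of this is uniform in $k$.

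This gives $\|\sup_k\sup_r|f_I*\Phi_{k,r}|\|_2\lesssim\|f_I\|_2$. Summing over the disjoint sectors of each size, and then over $m\le\log N$, yields $C\log N$. Without an argument of this kind, Step 3 is not established. Alternatives are an almost-orthogonality principle for directional maximal operators, or the geometric proof that the paper relies on through its citation.
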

Now, we define a variant of Kakeya maximal function, let
\begin{equation}{\label{maximal function definition}}
\mathcal{K}_{a,b}f(x) := \sup_{x \in R_{a,b}}\frac{1}{|R_{a,b}|}\int_{R_{a,b}}|f(y)| \,dy,    
\end{equation}
where the supremum is over all rectangles $R_{a,b}$ with eccentricity between $a$ and $b$. Then, for each $\ell \in \N$
\begin{equation}{\label{estimate}}
\|\mathcal{K}_{1,2^{\ell}}f\|_2 \le \sum\limits_{k=0}^{\ell-1}\|\mathcal{K}_{2^k,2^{k+1}}f\|_2 
\lesssim \sum\limits_{k=0}^{\ell-1}k\|f\|_2\lesssim \ell^2\|f\|_2.     
\end{equation}
We proceed with the proof of the estimate \eqref{L4Gnuk}.

Recall the decomposition of the operator $T_t^{\nu}$ as in \eqref{partitionofunity},
$$|{T}^{\nu}_tg(x)|\leq |T^{\nu,\psi_1}_{t}g(x)|+\sum_{\ell \ge 2}  2^{-\ell\nu}|T^{\nu,\psi}_{2^{-\ell},t}g(x)|,\quad x\in \R^2,$$
where the operators $T_{t}^{\nu,\psi_1}$ and $T^{\nu,\psi}_{2^{-\ell},t}$ are defined, respectively, in \eqref{Tuno} and \eqref{def:Tnudelta}. For the non-singular part $T_{t}^{\nu,\psi_1}$, it was shown in the proof of Proposition \ref{Tnu} that $\left|T_{t}^{\nu,\psi_1}g(x)\right| \lesssim \mathfrak{m}_{t}g(x)$, and as a result, using Minkowski's inequality, we have
$$
\Big\|\Big(\int^{2^{-k/2}}_{2^{-(k+1)/2}}|T_{t}^{\nu,\psi_1}g(x)|^2 t^{2b} \, d t\Big)^{1/2}
\Big\|_{4} \lesssim \|g\|_4.
$$

For the other part $T^{\nu,\psi}_{2^{-\ell},t}$, in order to simplify the notation, we denote 
$$
\widehat{T^{\nu,\psi}_{2^{-\ell},t}g}(\xi) = \Phi_{\ell}\Big(\frac{\xi_1}{t},{\xi_2}\Big)\widehat{g}(\xi), \quad \xi=(\xi_1,\xi_2),
$$
where $\Phi_{\ell}(\xi):=\psi\big(2^{\ell}\big(1-\frac{\xi^2_1}{\xi^2_2}\big)\big) \big(2^{\ell}(1-\frac{\xi^2_1}{\xi^2_2})\big)^{\nu}\varphi(\xi_2)$. Our goal is to prove that there exists $\alpha > 0$ such that, for every $\ell \ge 2$,
$$
\Big\Vert\Big(\int_{2^{-\frac{k+1}{2}}}^{2^{-\frac{k}{2}}} |T^{\nu,\psi}_{2^{-\ell},t}g(x)|^2 t^{2b} \,{d t}\Big)^{1/2}\Big\Vert_{4} \lesssim 2^{-\ell/2}\ell^{\alpha}k\|g\Vert_4.
$$
Indeed, assuming the above and adding in $\ell$ we obtain the result, under the restriction $\nu>-1/2$.

Given $\ell\ge2$ recall that, for $n\in \Z$ and $1\le j\le 2^{\ell}$, $S_n^j$ are the trapezoids introduced at the beginning of the current section. For $0<t<1$, let $\mathcal{A}_t:=\{S_n^j, \,\,\operatorname{supp} \Phi_{\ell}(\frac{\cdot}{t},\cdot) \cap S_n^j \neq \emptyset \}$. It is easy to verify that for every $t \in \mathbb{R}$, $\mathcal{A}_t$ contains at most $C$ trapezoids, where $C$ is a constant independent of $t$ and $\ell$. Then,
\begin{align}
&\Big\|\Big(\int_{2^{-\frac{k+1}{2}}}^{2^{-\frac{k}{2}}}|T^{\nu,\psi}_{2^{-\ell},t}g(x)|^2 t^{2b} \, {d t}\Big)^{1/2}\Big\|_{4}^4 = \int_{\mathbb{R}^2}\int_{2^{-\frac{k+1}{2}}}^{2^{-\frac{k}{2}}} |T^{\nu,\psi}_{2^{-\ell},t}g(x)|^2 t^{2b} \, {d t}\int_{2^{-\frac{k+1}{2}}}^{2^{-\frac{k}{2}}} |T^{\nu,\psi}_{2^{-\ell},u}g(x)|^2 u^{2b} \, {d u}\, d x \nonumber \\ 
&\qquad = \int_{2^{-\frac{k+1}{2}}}^{2^{-\frac{k}{2}}}\int_{2^{-\frac{k+1}{2}}}^{2^{-\frac{k}{2}}}\int_{\mathbb{R}^2}|T^{\nu,\psi}_{2^{-\ell},t}g(x)|^2 |T^{\nu,\psi}_{2^{-\ell},u}g(x)|^2 \,d x\,t^{2b} \, {d t}\, u^{2b} \, {d u} \nonumber \\
& \qquad =  \int_{2^{-\frac{k+1}{2}}}^{2^{-\frac{k}{2}}}\int_{2^{-\frac{k+1}{2}}}^{2^{-\frac{k}{2}}}\int_{\mathbb{R}^2} \big|\big(\Phi_{\ell}\big(\frac{\cdot}{t},\cdot\big)\widehat{g}(\cdot,\cdot)\big) \ast \big(\Phi_{\ell}\big(\frac{\cdot}{u},{\cdot}\big)\widehat{g}(\cdot,\cdot)\big)\big|^2\, d \xi\, t^{2b} \, {d t} \,u^{2b} \,{d u} \nonumber \\
&\qquad  =  \int_{2^{-\frac{k+1}{2}}}^{2^{-\frac{k}{2}}}\int_{2^{-\frac{k+1}{2}}}^{2^{-\frac{k}{2}}}\int_{\mathbb{R}^2} \Big|\sum\limits_{n,n^{\prime},j,j^{\prime}}\big(\Phi_{\ell}\big(\frac{\cdot}{t},\cdot\big)\widehat{S_n^jg}(\cdot,\cdot)\big) \ast \big(\Phi_{\ell}(\frac{\cdot}{u},{\cdot}\big)\widehat{S_{n^{\prime}}^{j^{\prime}}g}(\cdot,\cdot)\big)\Big|^2\, d \xi \,t^{2b} \, {d t} \,u^{2b} \,{d u} \nonumber \\
& \qquad \lesssim  \sum\limits_{n,n^{\prime},j,j^{\prime}} \int_{2^{-\frac{k+1}{2}}}^{2^{-\frac{k}{2}}}\int_{2^{-\frac{k+1}{2}}}^{2^{-\frac{k}{2}}}\int_{\mathbb{R}^2} \big|\big(\Phi_{\ell}\big(\frac{\cdot}{t},\cdot)\widehat{S_n^jg}(\cdot,\cdot)\big) \ast \big(\Phi_{\ell}\big(\frac{\cdot}{u},{\cdot}\big)\widehat{S_{n^{\prime}}^{j^{\prime}}g}(\cdot,\cdot)\big)\big|^2\, d \xi \,t^{2b} \, {d t} \,u^{2b} \, {d u} \nonumber \\
& \qquad = \sum\limits_{n,n^{\prime},j,j^{\prime}} \int_{2^{-\frac{k+1}{2}}}^{2^{-\frac{k}{2}}}\int_{2^{-\frac{k+1}{2}}}^{2^{-\frac{k}{2}}}\int_{\mathbb{R}^2} |T^{\nu,\psi}_{2^{-\ell},t}S_n^jg(x)|^2|T^{\nu,\psi}_{2^{-\ell},u}S_{n^{\prime}}^{j^{\prime}}g(x)|^2\, d x\, t^{2b} \, dt \, u^{2b} \, du \nonumber \\
&\qquad  = \Big \|\Big(\int_{2^{-\frac{k+1}{2}}}^{2^{-\frac{k}{2}}} \sum\limits_{n,j}|T^{\nu,\psi}_{2^{-\ell},t}S_n^jg(x)|^2 t^{2b} \, {d t}\Big)^{1/2}\Big\|_{4}^4, \label{equal new square function}
\end{align}
where, in view of the support of $\operatorname{supp} \Phi_{\ell}(\frac{\cdot}{t},\cdot) \cap S_n^j$, the sum in $n, n^{\prime}$ runs from $-\infty$ to $0$, and the sum in $j, j^{\prime}$ runs from $0$ to $2^{\ell}$. 

For each $j, n$ fixed, let $\mathcal{B}_{n}^{j} =\{t \in [0,\infty): \exists\,x \in \mathbb{R}^2 \,\text{ such that }\, T^{\nu,\psi}_{2^{-\ell}, t}S_n^jg(x)\neq 0 \}$. Then, since $n\le 0$,
$$
\int_{\mathcal{B}_n^j} t^{2b} \, d{t} \le \int_{\{t:\operatorname{supp} \Phi_{\ell}(\frac{\cdot}{t},\cdot) \cap S_n^j \neq \phi \}} t^{2b} \, dt \lesssim 2^{n-\ell}{2^{2nb}} \le 2^{-\ell}.
$$
Invoking Corollary~\ref{eccentricity result}, we have that there exists a sequence $\{C_i\}_{i=1}^{\infty}$ with $\|\{C_i\}\|_{\ell^1}=1$ such that
$$
|T^{\nu,\psi}_{2^{-\ell},t}g(x)| \lesssim \sum_{i}C_i\frac{\chi_{A_i^t}}{|A_i^t|} \ast |g|(x),
$$
where $A_i^t$ are rectangles with eccentricity between $1$ and $2^{\ell}t^{-1}$.

Note that this reduces our task to showing that there exists $\alpha >0$, such that 
$$
\Big\|\Big(\int_{2^{-\frac{k+1}{2}}}^{2^{-\frac{k}{2}}} \sum\limits_{n,j} \Big|\chi_{\mathcal{B}_n^j}(t)\Big(\frac{\chi_{A_i^t}}{|A_i^t|}*|S_n^jg|\Big)(x)\Big|^2 t^{2b} \, {d t}\Big)^{1/2}\Big\|_{4} \lesssim 2^{-\ell/2} \ell^{\alpha}k\|g\|_4
$$
holds uniformly for every $i$. 
Indeed, for every $h \in L^2({\mathbb{R}^n})$ with $\|h\|_2=1$, we have 
\begin{align}
&\int_{\mathbb{R}^2} \int_{2^{-\frac{k+1}{2}}}^{2^{-\frac{k}{2}}}\sum\limits_{n,j} \Big|\chi_{\mathcal{B}_n^j}(t)\Big(\frac{\chi_{A_i^t}}{|A_i^t|}*|S_n^jg|\Big)(x)\Big|^2 t^{2b} \, dt \, h(x)\, dx \nonumber\\
&\quad \le  \sum\limits_{n,j} \int_{2^{-\frac{k+1}{2}}}^{2^{-\frac{k}{2}}}\chi_{\mathcal{B}_n^j}(t)\int_{\mathbb{R}^2}  \Big(\frac{\chi_{A_i^t}}{|A_i^t|}*|S_n^jg|^2\Big)(x) \cdot h(x)\, dx\, t^{2b} \,d t \nonumber \\
&\quad=  \sum\limits_{n,j} \int_{2^{-\frac{k+1}{2}}}^{2^{-\frac{k}{2}}}\chi_{\mathcal{B}_n^j}(t)\int_{\mathbb{R}^2} |S_n^jg(x)|^2 \Big(\frac{\chi_{\widetilde{A}_i^t}}{|A_i^t|}* h\Big)(x)\, dx\, t^{2b} \, dt \nonumber \\
&\quad\le  \sum\limits_{n,j} \int_{\mathbb{R}} \chi_{\mathcal{B}_n^j}(t) \int_{\mathbb{R}^2} |S_n^jg(x)|^2\mathcal{K}_{1,2^{\ell+\frac{k+1}{2}}}h(x) \,dx\, t^{2b} \, dt \nonumber \\
&\quad\le 2^{-\ell} \int_{\mathbb{R}^2} \sum\limits_{n,j}|S_n^jg(x)|^2 \mathcal{K}_{1,2^{\ell+\frac{k+1}{2}}}h(x)\, dx \nonumber \\
&\quad\le  2^{-\ell} \Big\|\Big(\sum\limits_{n,j}|S_n^jg(x)|^2\Big)^{1/2}\Big\|_{4}^2 \|\mathcal{K}_{1,2^{\ell+\frac{k+1}{2}}}h\|_2  \nonumber \\
&\notag \quad\lesssim  2^{-\ell} \ell^{2 \beta}\Big(\ell+\frac{k+1}{2}\Big)^2\|g\|_4^2 \\
&\quad \lesssim  2^{-\ell} \ell^{2\beta+2}k^2\|g\|_4^2, 
\end{align}
where $\widetilde{A}_i^t:=-A_i^t$ and we used Proposition \ref{square function estimate} and \eqref{estimate} in the second to last step.

\subsection{Proof of Proposition \ref{square function Gmu}, part \eqref{Gmu estimate 2}}

We recall the operator ${B}^{\mu}_{j,t}$ given in \eqref{eq:Bjt}.
Now, for $t \in [0,\sqrt{2^{1-j}}]$, $j\geq2$, and $ \mu > \frac{1}{2}$, we have
\begin{align*}
{B}^{\mu}_{j,t}f(x) & = (1-t^2)^{\mu-1}\int_{\mathbb{R} \times [1/2,2]}\Big(1-\frac{\xi_1^2}{(1-t^2)\xi^2_2}\Big)^{\mu-1}_{+}\varphi(\xi_2)\psi\Big(2^{j}\big(1-\frac{\xi_1^2}{\xi^2_2}\big)\Big)\widehat{f}(\xi) e^{2\pi ix\cdot\xi} d\xi   \\ 
& = (1-t^2)^{\mu-1}{T}_{\sqrt{1-t^2}}^{\mu-1} \, f_{1,j}(x),  
\end{align*}
where $T^{\mu-1}_{\sqrt{1-t^2}}$ is the operator defined in \eqref{eq:Toperator}, and $f_{1,j}(x) = K_{1,j} * f(x)$ is defined in ({\ref{fRj}}). Then, 
\begin{align*}
\Big\|\Big(\int^{\sqrt{2^{1-j}}}_{0}|{B}^{\mu}_{j,t}f(x)|^2 t^{2a} \, dt\Big)^{1/2}\Big\|_4 & = \Big\|\Big(\int^{\sqrt{2^{1-j}}}_{0}|{T}_{\sqrt{1-t^2}}^{\mu-1} \, f_{1,j}(x)|^2 (1-t^2)^{2\mu-2} t^{2a} \, dt\Big)^{1/2}\Big\|_4  \\
& = \Big\|\Big(\int_{\sqrt{1-2^{1-j}}}^{1}|{T}_{s}^{\mu-1} \, f_{1,j}(x)|^2 (1-s^2)^{a-1/2}\, s^{4\mu-3} \, ds\Big)^{1/2}\Big\|_4. 
\end{align*}
Now, we apply again the decomposition \eqref{partitionofunity} to the operator ${T}_{s}^{\mu-1}$. Then, for the non-singular part $T_{s}^{\mu-1,\psi_1} $, using estimates \eqref{estimate of inner part psi_1} and \eqref{fj}, we obtain
\begin{align*}
&\Big\|\Big(\int_{\sqrt{1-2^{1-j}}}^{1}|T_{s}^{\mu-1,\psi_1} \, f_{1,j}(x)|^2 (1-s^2)^{a-1/2}\, s^{4\mu-3} \, ds\Big)^{1/2}\Big\|_4 \\
&\qquad 
\lesssim \Big \|\Big(\int_{\sqrt{1-2^{1-j}}}^{1}|\mathfrak{m}_{s} \circ\mathfrak{m}_1f(x)|^2 (1-s^2)^{a-1/2}\, s^{4 \mu -3} \, ds\Big)^{1/2}\Big\|_4 \\
&\qquad \lesssim 2^{-j(\frac{a}{2}+\frac{1}{4})}\|f\|_4.
\end{align*}

For the singular part $T^{\mu-1,\psi}_{2^{-\ell},s}$ our goal is to show that there exists $\alpha, \varepsilon> 0$ such that
$$
\Big\|\Big(\int_{\sqrt{1-2^{1-j}}}^{1}|T^{\mu-1,\psi}_{2^{-\ell},s} \, f_{1,j}(x)|^2 (1-s^2)^{a-1/2}\, s^{4\mu-3} \, ds\Big)^{1/2}\Big\|_4 \lesssim 2^{-\varepsilon j}2^{-\ell/2}\ell ^{\alpha}\|f\|_4.
$$
Once we obtain the above inequality, in view of \eqref{partitionofunity}, we conclude the desired estimate under the assumption that $\mu>1/2$.
Without loss of generality, we assume 
$$
\operatorname{supp}(\widehat{f}) \subset \Big\{\xi \in \mathbb{R}^2: \frac{1}{4} < \xi_1/\xi_2 < 1,\, \frac{1}{2}< \xi_2 < 2\Big\}.
$$
Now we decompose the above region into $2^{\ell}$ trapezoids $\{S_k\}_{k=1}^{2^{\ell}}$, where $S_k$ represents the trapezoid with endpoints 
$(\frac{1}{8} + (k-1) \frac{3}{8}2^{-\ell}, \frac{1}{2})$, $(\frac{1}{8} + k \frac{3}{8}2^{-\ell}, \frac{1}{2})$, $(\frac{1}{2} + (k-1) \frac{3}{2}2^{-\ell}, 2)$ and $(\frac{1}{2} + k \frac{3}{2}2^{-\ell},2)$. 
Similarly as in the previous subsection, it suffices to prove that there exists $\alpha, \varepsilon> 0$ such that,
\begin{equation}
\label{eq:sub63}
\Big\|\Big(\int_{\sqrt{1-2^{1-j}}}^{1} \sum\limits_{k=1}^{2^{\ell}}|T^{\mu-1,\psi}_{2^{-\ell},s}\, S_kf_{1,j}(x)|^2 (1-s^2)^{a-1/2}\, s^{4\mu-3} \, ds\Big)^{1/2}\Big\|_4 \lesssim 2^{-\varepsilon j}2^{-\ell/2}\ell^{\alpha} \|f\|_4.
\end{equation}
On the one hand, analogously as before, for each $k,j$ fixed, let 
$$
\mathcal{B}_{k}^j =\{s \in (\sqrt{1-2^{1-j}},1): \exists \,x \in \mathbb{R}^2 \text{ such that } T^{\mu-1,\psi}_{2^{-\ell},s}S_kf_{1,j}(x)\neq 0 \}.
$$
Thus, in order to prove \eqref{eq:sub63}, in view of Corollary \ref{eccentricity result},  we only need to show that, for every $i\ge 1$, 
$$
\Big\|\Big(\int_{\sqrt{1-2^{1-j}}}^{1}\sum\limits_{k} \Big|\chi_{\mathcal{B}_k^j}(s)\Big(\frac{\chi_{A_i^s}}{|A_i^s|}*|S_kf_{1,j}|\Big)(x)\Big|^2(1-s^2)^{a-1/2}\, s^{4\mu-3} \, ds\Big)^{1/2}\Big\|_{4} \lesssim 2^{-\varepsilon j}2^{-\ell/2}\ell^{\alpha} \|f\|_4,
$$
where $A_i^s$ are rectangles with eccentricity between $1$ and $2^{\ell +1 }$. Observe also that 
$$
\int_{\mathcal{B}_k^j} (1-s^2)^{a-1/2}\, s^{4\mu-3} \, ds \lesssim 2^{-j(a-1/2)}{|\{s:\operatorname{supp} \Phi_{\ell}(\tfrac{\cdot}{s},\cdot) \cap S_k \neq \emptyset \}}| \lesssim 2^{-j(a-1/2)}2^{-\ell}.
$$
Now, for every $h \in L^2({\mathbb{R}^n})$ with $\|h\|_2=1$,
\begin{align*}
&   \int_{\mathbb{R}^2} \int_{\sqrt{1-2^{1-j}}}^{1}\sum\limits_{k} \Big|\chi_{\mathcal{B}_k^j}(s)\Big(\frac{\chi_{A_i^s}}{|A_i^s|}*|S_kf_{1,j}|\Big)(x)\Big|^2 (1-s^2)^{a-1/2}\, s^{4\mu-3} \, ds  \,h(x) dx \\
& \quad\le  \sum\limits_{k} \int_{\sqrt{1-2^{1-j}}}^{1} \chi_{\mathcal{B}_k^j}(s)\int_{\mathbb{R}^2}  \Big(\frac{\chi_{A_i^s}}{|A_i^s|}*|S_kf_{1,j}|^2\Big)(x) \cdot h(x) dx (1-s^2)^{a-1/2}\, s^{4\mu-3} \, ds  \\
& \quad=  \sum\limits_{k} \int_{\sqrt{1-2^{1-j}}}^{1}  \chi_{\mathcal{B}_k^j}(s)\int_{\mathbb{R}^2} |S_kf_{1,j}(x)|^2 \Big(\frac{\chi_{\widetilde{A}_i^s}}{|A_i^s|}* h\Big)(x)\, dx\, (1-s^2)^{a-1/2}\, s^{4\mu-3} \, ds  \\
& \quad\le  \sum\limits_{k} \int_{\mathbb{R}} \chi_{\mathcal{B}_k^j}(s) \int_{\mathbb{R}^2} |S_kf_{1,j}(x)|^2 \mathcal{K}_{1,2^{\ell+1}}h(x) \,dx\, (1-s^2)^{a-1/2}\, s^{4\mu-3} \, ds \\
& \quad\le  2^{-j(a-1/2)}2^{-\ell} \int_{\mathbb{R}^2} \sum\limits_{k}|S_kf_{1,j}(x)|^2 \mathcal{K}_{1,2^{\ell+1}}h(x) \,dx \\
& \quad\le  2^{-j(a-1/2)}2^{-\ell} \Big\|\big(\sum\limits_{k}|S_kf_{1,j}(x)|^2\big)^{1/2}\Big\|_{4}^2 \|\mathcal{K}_{1,2^{\ell+1}}h(x)\|_2   \\
& \quad\lesssim  2^{-j(a-1/2)}2^{-\ell}l^{2\beta + 2}\|f_{1,j}\|_4^2 \le 2^{-j(a-1/2)}2^{-\ell}l^{2\beta + 2}\|f\|_4^2 , 
\end{align*}
where $\widetilde{A}_i^s = -A_i^s$ and we used Proposition \ref{square function estimate}, estimate \eqref{estimate} and \eqref{fj}.

\section{Proof of Proposition {\ref{T_j estimate}} for the term $\mathcal C_1^{\lambda}(f,g)$}\label{proofofj=1}\label{j=1case}

In view of \eqref{eq:Cljoriginal} and \eqref{eq:ml1}, we can write 
\begin{align*}
    \mathcal C^{\lambda}_1(f,g)(x)=\int_{\mathbb{R}^2\times \mathbb{R}^2} \psi_1\Big(\frac{\xi^2_1}{\xi^2_2}\Big) \Big(1-\frac{\xi^2_1}{\xi^2_2}-\frac{\eta^2_1}{\eta^2_2}\Big)^{\lambda}_{+} \varphi(\xi_2)\varphi(\eta_2) \widehat{f}(\xi)\widehat{g}(\eta) e^{2\pi i x\cdot(\xi+\eta)} d\xi\, d\eta. 
\end{align*}
We decompose the multiplier further with respect to $\eta$ as follows.  
\begin{align*}
    \psi_1\Big(\frac{\xi^2_1}{\xi^2_2}\Big)  \Big(1-\frac{\xi^2_1}{\xi^2_2}-\frac{\eta^2_1}{\eta^2_2}\Big)^{\lambda}_{+}\varphi(\xi_2)\varphi(\eta_2)
    &=\sum\limits_{k \ge 2} m^{\lambda}_{1,k}(\xi,\eta)+m_{1,1}^{\lambda}(\xi,\eta),
\end{align*}
where
$$
 m^{\lambda}_{1,k}(\xi,\eta):=\psi\Big(2^k\big(1-\frac{\eta^2_1}{\eta^2_2}\big)\Big) \psi_1\Big(\frac{\xi^2_1}{\xi^2_2}\Big)  \Big(1-\frac{\xi^2_1}{\xi^2_2}-\frac{\eta^2_1}{\eta^2_2}\Big)^{\lambda}_{+}\varphi(\xi_2)\varphi(\eta_2)
$$
and 
$$
m_{1,1}^{\lambda}(\xi,\eta):= \psi_1\Big(\frac{\eta^2_1}{\eta^2_2}\Big) \psi_1\Big(\frac{\xi^2_1}{\xi^2_2}\Big) \Big(1-\frac{\xi^2_1}{\xi^2_2}-\frac{\eta^2_1}{\eta^2_2}\Big)^{\lambda}_{+}\varphi(\xi_2)\varphi(\eta_2).
$$
Additionally, we decompose 
\begin{align*}
m_{1,1}^{\lambda}(\xi,\eta) &= \varphi(\xi_2)\varphi(\eta_2)\bigg[\psi_{11}\Big(\frac{\eta^2_1}{\eta^2_2}\Big) \psi_1\Big(\frac{\xi^2_1}{\xi^2_2}\Big) \Big(1-\frac{\xi^2_1}{\xi^2_2}-\frac{\eta^2_1}{\eta^2_2}\Big)^{\lambda}_{+} +\psi_{12}\Big(\frac{\eta^2_1}{\eta^2_2}\Big) \psi_1\Big(\frac{\xi^2_1}{\xi^2_2}\Big) \Big(1-\frac{\xi^2_1}{\xi^2_2}-\frac{\eta^2_1}{\eta^2_2}\Big)^{\lambda}_{+}\Big]\\
&=: m_{1,11}^{\lambda}(\xi,\eta) + m_{1,12}^{\lambda}(\xi,\eta),
\end{align*}
where $\psi_{11}$ and $\psi_{12}$ are smooth functions and satisfying
$$
\operatorname{supp}(\psi_{11})\subset [-\tfrac{1}{8},\tfrac{1}{8}]\quad \text{ and } \quad \operatorname{supp}(\psi_{12})\subset [\tfrac{1}{16},\tfrac{3}{4}].
$$
Since the multiplier $ m_{1,11}^
{\lambda}(\xi,\eta)$ is a smooth function,  the operator corresponding to this part is bounded from $L^{p_1}\times L^{p_2}$ to $L^p$ for all $1 \le p,p_1,p_2\leq\infty$ and $\frac{1}{p}=\frac{1}{p_1}+\frac{1}{p_2}$.
Let $T_{1,k}^{\lambda}$ and $T_{1,12}^{\lambda}$ denote the bilinear multiplier operators corresponding to the multipliers $m^{\lambda}_{1,k}(\xi,\eta)$ and $m^{\lambda}_{1,12}(\xi,\eta)$ respectively. 
Therefore, the case $j=1$ in Proposition \ref{T_j estimate} is reduced to proving the following propositions.

\begin{proposition}
\label{prop1inj=1}
Let $k \ge 2$, suppose $2 \le p_1,p_2 \le \infty$ with $\frac{1}{p}=\frac{1}{p_1}+\frac{1}{p_2}$. Then for $p_1, p_2,  \lambda$ satisfying the assumptions of three parts \eqref{main estimate 1}, \eqref{main estimate 4} and \eqref{main estimate 5} in Proposition \ref{T_j estimate},
$$
\|T_{1,k}^{\lambda}(f,g)\|_p \lesssim 2^{-\varepsilon k}\|f\|_{p_1}\|g\|_{p_2}
$$
holds for $f \in L^{p_1}(\R^2)$, $g \in L^{p_2}(\R^2)$.
\end{proposition}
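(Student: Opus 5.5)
The idea is to run the argument for $\mathcal C^\lambda_j$, $j\ge2$, again with the roles of $\xi$ and $\eta$ interchanged. Here the $\eta$ variable carries the localization $1-\eta_1^2/\eta_2^2\approx 2^{-k}$, and the cutoff $\psi_1(\xi_1^2/\xi_2^2)$ keeps $\xi$ in the region $\xi_1^2/\xi_2^2\le 3/4$, away from the edge of the cone.

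\textbf{Step 1: representation.} Apply the Stein--Weiss identity \eqref{Stein-Weiss inequality} with $R^2=1-\eta_1^2/\eta_2^2$ and $|m|^2=\xi_1^2/\xi_2^2$. This gives
\[
T^\lambda_{1,k}(f,g)(x)=c_{\mu,\nu}\int_0^{\sqrt{2^{1-k}}}\widetilde T^{\nu}_{t}f(x)\,\widetilde B^{\mu}_{k,t}g(x)\,t^{2\nu+1}\,dt .
\]
Here $\widetilde B^\mu_{k,t}$ is exactly $B^\mu_{k,t}$ from \eqref{eq:Bjt}, acting on $g$. The operator $\widetilde T^\nu_t$ is $T^\nu_t$ from \eqref{eq:Toperator}, acting on $f$, with the extra smooth factor $\psi_1(\xi_1^2/\xi_2^2)$ in its symbol. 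Since $t\le\sqrt{2^{1-k}}\le 1/\sqrt2$, on the support of $(1-\xi_1^2/(t^2\xi_2^2))_+$ we have $\xi_1^2/\xi_2^2\le t^2\le 1/2$. So for $k\ge2$ the factor $\psi_1$ is harmless: it can be removed by choosing $\psi_1\equiv1$ on $[0,1/2]$, or else absorbed as a smooth multiplier whose kernel is controlled by $\mathfrak m$.

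\textbf{Step 2: the pieces.} Proposition~\ref{Tnu}, Proposition~\ref{Bmu}, Proposition~\ref{operatorB}, the $L^2$ and $L^p$ computations of Section~\ref{sec:pnot4}, and the $L^4$ estimates of Section~\ref{sec:L4sf} then apply verbatim with $j$ replaced by $k$. In particular, Propositions~\ref{square function Gnu} and~\ref{square function Gmu}, together with \eqref{L4Gnuk} and \eqref{LpGmuk}, hold for $H^{\nu,k}_b f$ and $G^{\mu,k}_a g$.

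\textbf{Step 3: assembly.} Repeat the argument of Subsections 4.1.1--4.1.3 with the dyadic splitting \eqref{dyadically estimate}. Since $f$ and $g$ have swapped roles, the case distinctions also swap. For example, in part \eqref{main estimate 1} the pairs $(p_1,p_2)\in\{(2,2),(2,4)\}$ are now the easy ones, handled by the full square functions. The pairs $(4,2)$ and $(4,4)$ need the localized estimate \eqref{L4Gnuk} for the $f$ factor, combined with \eqref{LpGmuk} for the $g$ factor. In each case the choices of $\mu,\nu,a,b$ are the same as before, with $p_1$ and $p_2$ interchanged. Since the exponent conditions in parts \eqref{main estimate 4} and \eqref{main estimate 5} are symmetric in $(p_1,p_2)$ (they involve only $p_+$ or $p$), the thresholds come out the same, and each case yields the decay $2^{-\varepsilon k}$.

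\textbf{Main obstacle.} The main difficulty is Step 1's claim that the extra cutoff does no damage. Specifically, the maximal domination of $\widetilde T^\nu_t$ and the $L^4$ bound \eqref{L4Gnuk} must survive multiplication by $\psi_1(\xi_1^2/\xi_2^2)$. If that cutoff is not identically $1$ on the relevant support, I would write $\widetilde T^\nu_t=T^\nu_t\circ P$, where $P$ is the Fourier multiplier with symbol $\psi_1(\xi_1^2/\xi_2^2)\widetilde\varphi(\xi_2)$. Because $P$ is a smooth homogeneous-type multiplier localized in $\xi_2$, its kernel is integrable, so $P$ is bounded on every $L^p$, $1\le p\le\infty$. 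The estimates would then be applied to $Pf$ instead of $f$.
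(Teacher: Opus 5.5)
Your proposal is correct and is essentially the paper's argument. The paper writes $T^{\lambda}_{1,k}(f,g)=\mathcal C^{\lambda}_k(g,h)$ with $\widehat h(\xi)=\widetilde\varphi(\xi_2)\psi_1(\xi_1^2/\xi_2^2)\widehat f(\xi)$, using the symmetry of $m^\lambda$ in $(\xi,\eta)$, and then cites Proposition~\ref{T_j estimate} with $j=k\ge2$ (whose hypotheses are symmetric in $p_1,p_2$) together with $\|h\|_{p_1}\lesssim\|f\|_{p_1}$, instead of re-running Steps 2--3 as you do; your observation that $\psi_1\equiv1$ on $[0,1/2]$ (forced by \eqref{eq:partition}) shows one may even take $h=f$.
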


\begin{proposition}{\label{T1j operator}}
	Suppose $2 \le p_1,p_2 \le \infty$ with $\frac{1}{p}=\frac{1}{p_1}+\frac{1}{p_2}$. Then for $p_1, p_2,  \lambda$ satisfying the assumptions of three parts \eqref{main estimate 1}, \eqref{main estimate 4} and \eqref{main estimate 5} in Proposition \ref{T_j estimate}, 
	$$\|T_{1,12}^{\lambda}(f,g)\|_p \lesssim \|f\|_{p_1}\|g\|_{p_2}$$
	holds for $f \in L^{p_1}(\R^2)$, $g \in L^{p_2}(\R^2)$.
\end{proposition}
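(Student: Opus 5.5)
The plan is to prove Proposition~\ref{T1j operator} by showing that on the support of $m^{\lambda}_{1,12}$ the multiplier is essentially a smooth bump times a one-variable cone multiplier in $\eta$ alone, so that the $\eta$-singularity can be treated with the linear estimates of Section~\ref{sec:dominationbystrongmax}.

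First, I would analyze the support. There $\xi_1^2/\xi_2^2\le 3/4$ and $\eta_1^2/\eta_2^2\in[1/16,3/4]$. The singular set $\{\xi_1^2/\xi_2^2+\eta_1^2/\eta_2^2=1\}$ meets this support only where $\xi_1^2/\xi_2^2\ge 1/4$ and $\eta_1^2/\eta_2^2\ge 1/4$. So both ratios are nondegenerate, $|\eta_1|/\eta_2\in[1/4,\sqrt3/2]$, and the singular set is a smooth graph $|\eta_1|/\eta_2 = r(\xi):=\sqrt{1-\xi_1^2/\xi_2^2}$, with $r$ smooth and bounded away from $0$ and $1$.

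Second, I would mimic the representation \eqref{eq:twoperators}, with the roles of $f$ and $g$ adapted. Applying \eqref{Stein-Weiss inequality} with $R^2=1-\xi_1^2/\xi_2^2$ and $|m|^2=\eta_1^2/\eta_2^2$ gives
\[
T_{1,12}^{\lambda}(f,g)(x)=c_{\mu,\nu}\int_{0}^{1} \widetilde T^{\nu}_{t}g(x)\, \widetilde B^{\mu}_{t}f(x)\, t^{2\nu+1}\,dt .
\]
Here $\widetilde T^{\nu}_t$ is $T^\nu_t$ composed with the smooth cutoff $\psi_{12}(\eta_1^2/\eta_2^2)\varphi(\eta_2)$. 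This forces $t\gtrsim 1/4$, so only $t\in[1/4,1]$ contributes. The operator $\widetilde B^{\mu}_t$ has multiplier $\psi_1(\xi_1^2/\xi_2^2)\varphi(\xi_2)(1-\xi_1^2/\xi_2^2-t^2)_+^{\mu-1}$.

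For $t$ bounded below, the factor $\widetilde B^\mu_t$ equals $(1-t^2)^{\mu-1}T^{\mu-1}_{\sqrt{1-t^2}}$ applied to a smoothly truncated $f$, exactly as in the proof of Proposition~\ref{Bmu}. The truncation is now by $\psi_1$, a harmless smooth cone-sector cutoff, and its kernel is an $L^1$ Schwartz-type function; so $|f_{1}|\lesssim \mathfrak m_1 f$ follows as in \eqref{fj}, or even more simply. The parameter $\sqrt{1-t^2}$ ranges over $[0,\sqrt{15}/4]$, which is the regime $0<t\le1$ of Proposition~\ref{Tnu}.

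Third, I would split the $t$-integral into $t\in[1/4,1-c]$ and $t\in[1-c,1]$, and treat each range as follows.

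1. On $[1/4,1-c]$, both square functions are estimated as in Section~\ref{sec:pnot4}. By Cauchy--Schwarz in $t$, we get $H$-type and $G$-type square functions with $t^{2b}$, $t^{2a}$ weights, but the weights are now harmless since $t\simeq 1$.
   - The $L^2$ bounds follow by Plancherel exactly as in parts \eqref{Gnu estimate 1} and \eqref{Gmu estimate 1}, for $\nu>-1/2$ and $\mu>1/2$.
   - The $L^\infty$ (and $p>2$) bounds follow from Propositions~\ref{Tnu} and \ref{Bmu}, for $\nu>0$ and $\mu>1$.
   - The $L^4$ bounds with $\nu,\mu>-1/2+$ and $1/2+$ follow from the argument proving \eqref{L4Gnuk} and part \eqref{Gmu estimate 2}. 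That argument uses Corollary~\ref{eccentricity result}, the trapezoid square function of Proposition~\ref{square function estimate}, and \eqref{estimate}, applied now to a single dyadic $t$-range of unit length, so the logarithmic factors are constants.

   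The exponent choices $\mu,\nu,a,b$ are then made exactly as in the subsections proving parts \eqref{main estimate 1}, \eqref{main estimate 4}, \eqref{main estimate 5} for $j\ge2$. No decay $2^{-\varepsilon j}$ is needed, only boundedness.

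2. On $[1-c,1]$, the factor $\widetilde B^\mu_t$ is away from its singularity for most $\xi$ in the $\psi_1$-support. Here I would decompose dyadically in $1-t$ as in \eqref{LpGmuk}, with an analogue of Proposition~\ref{operatorB}, and sum the geometric series in the weight $(1-t^2)^{\mu-1}$. This sum converges since $\mu>0$.

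The main obstacle I expect is the $L^4$ estimate in this $j=1$ setting. The reduction \eqref{equal new square function} relies on the almost-disjointness of the trapezoid supports, which must be rechecked when $t$ ranges over a unit interval rather than a dyadic one. I would handle this by cutting $[1/4,1]$ into $O(1)$ pieces on which the family $\mathcal A_t$ has bounded overlap, and then repeating the duality argument with $\mathcal K_{1,2^{\ell+C}}$ verbatim.
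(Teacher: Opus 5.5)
Your step~2 does not work. By \eqref{eq:partition}, $\psi_1\equiv 1$ on $[0,1/2)$. So for $t\in[1-c,1]$ the multiplier of your $\widetilde B^{\mu}_t$ is exactly $\varphi(\xi_2)\,(s^2-\xi_1^2/\xi_2^2)_+^{\mu-1}$ with $s=\sqrt{1-t^2}\to 0$. This is a narrow cone of aperture $s$, and its singular edge $|\xi_1|/\xi_2=s$ lies well inside the $\psi_1$-support. You correctly observe that the bilinear singular set only meets $\{\xi_1^2/\xi_2^2\ge 1/4\}$, but this does not pass to the factors: the subordination \eqref{Stein-Weiss inequality} gives the $f$-factor a singularity along $\xi_1^2/\xi_2^2=1-t^2$ for every $t$. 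Nothing keeps $1-\xi_1^2/\xi_2^2-t^2$ away from $0$. In Proposition~\ref{operatorB}, by contrast, the cutoff $\psi(2^j(1-\xi_1^2/\xi_2^2))$ together with $t^2\le 2^{-2-j}$ forces it to be $\simeq 2^{-j}$.

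For $\mu<1$ your multiplier is unbounded, so $\widetilde B^{\mu}_t$ is not even $L^2$-bounded for fixed $t$, and no analogue of Proposition~\ref{operatorB} can hold. The only fixed-$t$ pointwise bound at your disposal is $s^{2\mu-2}\mathfrak{m}_s$ from Proposition~\ref{Tnu}, which requires $\mu>1$. Your claim that the dyadic sum converges ``since $\mu>0$'' presupposes exactly such a bound. In fact, by Plancherel the square function over $\{1-t^2\simeq 2^{-k}\}$ has $L^2$ operator norm $\simeq 2^{-k(\mu-1/2)}$, which is finite only for $\mu>1/2$. For $(p_1,p_2)=(4,2),(4,4)$ and everything interpolated from them, you would also need an $L^4$ bound for what is really an $H$-type narrow-cone square function, namely $T^{\mu-1}_s$ with weight $s^{4\mu-3}\,ds$. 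For that object \eqref{L4Gnuk} only gives $k\|f\|_4$ per dyadic scale, which does not sum.

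The missing idea is the paper's choice of orientation. Apply \eqref{Stein-Weiss inequality} with $R^2=1-\eta_1^2/\eta_2^2$ and $|m|^2=\xi_1^2/\xi_2^2$, using that $\psi_{12}$ keeps $R^2\in[1/4,15/16]$. Then $T^{\lambda}_{1,12}(f,g)=c_{\mu,\nu}\int_0^{\sqrt{15/16}}\widetilde B^{\mu}_t g\,T^{\nu}_t h\,t^{2\nu+1}\,dt$ with $\widehat h=\widetilde\varphi(\xi_2)\psi_1(\xi_1^2/\xi_2^2)\widehat f$.
\begin{itemize}
\item The $B$-factor now acts on $g$ and is singular only along wide cones $\eta_1^2/\eta_2^2=1-t^2\ge 1/16$. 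It is genuinely smooth for $t\le 1/8$, where the proof of Proposition~\ref{operatorB} yields the decaying estimate \eqref{Lemma73} for every $\mu>0$.
\item The narrow-cone regime $t\to 0$ falls on $T^{\nu}_t h$, damped by $t^{2\nu+1}$. It is controlled by Proposition~\ref{Tnu}, Plancherel and \eqref{L4Gnuk}, and the factor $k$ from \eqref{L4Gnuk} is absorbed by \eqref{Lemma73} exactly as in the case $j\ge 2$.
\end{itemize}

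If you prefer to keep your orientation, there is another repair, but it is a different argument from the one you describe. Treat the $f$-factor on $[1-c,1]$ via Proposition~\ref{Tnu}, Plancherel and \eqref{L4Gnuk} in the variable $s$. Get summability from the $g$-factor: for $c<1-\sqrt3/2$ it is smooth there, because $|\eta_1|/\eta_2\le\sqrt3/2<t$. It is then pointwise dominated by a maximal function and contributes $|I_k|^{1/2}\simeq 2^{-k/2}$ on each dyadic interval $I_k$. Your step~1 on $[1/4,1-c]$ is essentially fine as it stands.
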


\subsection{Proof of Proposition~\ref{prop1inj=1}}

Let $\widetilde{\varphi}(x)$ be a smooth function supported in $[1/4,4]$ which takes value $1$ on $[1/2,2]$. Then for $k \ge 2$,
\begin{align*}
T_{1,k}^{\lambda}(f,g)(x) &=\int_{\mathbb{R}^2\times \mathbb{R}^2} \psi\Big(2^k\big(1-\frac{\eta^2_1}{\eta^2_2}\big)\Big) \psi_1\Big(\frac{\xi^2_1}{\xi^2_2}\Big) \Big(1-\frac{\xi^2_1}{\xi^2_2}-\frac{\eta^2_1}{\eta^2_2}\Big)^{\lambda}_{+} \varphi(\xi_2)\widetilde{\varphi}(\xi_2)\varphi(\eta_2) \widehat{f}(\xi)\widehat{g}(\eta) e^{2\pi i x\cdot(\xi+\eta)} d\xi\, d\eta \\
& = \mathcal{C}_{k}^{\lambda}(g,h)
\end{align*}
where $\mathcal{C}_{k}^{\lambda}$ is defined in \eqref{eq:Cljoriginal} and $\widehat{h}(\xi):= \widetilde{\varphi}(\xi_2) \psi_1\big(\frac{\xi^2_1}{\xi^2_2}\big)\widehat{f}(\xi)$. The desired result is obtained by Proposition \ref{T_j estimate} with $j = k$, noting that $\|h\|_{L^p} \lesssim \|f\|_{L^p}$ holds since the term $\widetilde{\varphi}(\xi_2)\psi_1(\frac{\xi^2_1}{\xi^2_2})$ in the multiplier is harmless.




\subsection{Proof of Proposition~\ref{T1j operator}}

Note that $\operatorname{supp}(\psi_{12})\subset [\frac{1}{16},\frac{3}{4}]$, therefore invoking  identity (\ref{Stein-Weiss inequality}) with
$$
R^2=1-\frac{\eta_1^2}{\eta_2^2}\qquad \text{and} \qquad |m|^2=\frac{ \xi_1 ^{2}}{\xi_2^2},
$$
we get 
\begin{align*}
    m^{\lambda}_{1,12}(\xi,\eta)=c_{\mu,\nu}  \psi_{12}\Big(\frac{\eta^2_1}{\eta^2_2}\Big) \psi_1\Big(\frac{\xi^2_1}{\xi^2_2}\Big)\varphi(\xi_2)\varphi(\eta_2)\int^{\sqrt{\frac{15}{16}}}_0  \Big(1-\frac{\eta^2_1}{\eta^2_2}-t^2\Big)^{\mu-1}_{+} \Big(1-\frac{\xi^2_1} {t^2\xi^2_2}\Big)^{\nu}_{+} t^{2\nu+1}~dt.
\end{align*}
Hence, the operator $T_{1,12}^{\lambda}(f,g)(x)$ can be expressed as 
$$ 
T_{1,12}^{\lambda}(f,g)(x) = c_{\mu,\nu}  \int^{\sqrt{\frac{15}{16}}}_0 \widetilde{B}^{\mu}_{t}g(x){T}^{\nu}_{t}h(x) t^{2\nu+1}\,dt, 
$$
where 
\begin{align*}
    \widehat{\widetilde{B}^{\mu}_{t}g}(\eta):= \psi_{12}\Big(\frac{\eta^2_1}{\eta^2_2}\Big) \Big(1-\frac{\eta^2_1}{\eta^2_2}-t^2\Big)^{\mu-1}_{+}\varphi(\eta_2) \widehat{g}(\eta),
\end{align*}
${T}^{\nu}_{t}$ is defined in \eqref{eq:Toperator} and, analogously as in  Proposition~\ref{prop1inj=1}, we write $\widehat{h}(\xi):= \widetilde{\varphi}(\xi_2) \psi_1\big(\frac{\xi^2_1}{\xi^2_2}\big)\widehat{f}(\xi)$, where $\widetilde{\varphi}$ is defined in the proof of Proposition~\ref{prop1inj=1}.

For operator ${T}^{\nu}_{t}$, we provide the following square function estimate, which can be regarded as an extension of Proposition {\ref{square function Gnu}} in the case of $j=1$. The proof of this result follows exactly from the corresponding proof of Proposition {\ref{square function Gnu}}, and we omit it here.

\begin{proposition}{\label{Ttnu estimate 1}}
Let $b>-1/2$. Then
$$\Big\|\Big(\int^{\sqrt{\frac{15}{16}}}_{0}|{T}^{\nu}_{t}h(x)|^2~t^{2b}dt\Big)^{1/2}\Big\|_p \lesssim \|h\|_{p}$$
for all $ h \in L^{p}(\R^2)$, in the following cases:
\begin{enumerate}
\item If $p=2$, $\nu>-1/2$.
\item If $2 \le p \le \infty$, $\nu >0$.
\end{enumerate} 
In addition, for $\nu > -1/2$, we have
$$\Big\|\Big(\int^{1}_{\frac{1}{2}}|T^{\nu}_{t}h(x)|^2~t^{2b}dt\Big)^{1/2}\Big\|_4 \lesssim \|g\|_4.$$
\end{proposition}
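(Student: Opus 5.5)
The plan is to repeat, nearly verbatim, the proof of Proposition \ref{square function Gnu}. The only change is that the upper limit of integration in $t$ is now the fixed number $\sqrt{15/16}<1$ rather than $\sqrt{2^{1-j}}$. Consequently no decay in $j$ is available or required; we only need uniform boundedness. Throughout we use that $0<t<1$ on the range of integration, so Proposition \ref{Tnu} and Corollary \ref{eccentricity result} apply without modification.

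\emph{Case $p=2$, $\nu>-1/2$.} Apply Plancherel and Fubini exactly as in the $L^2$ part of Proposition \ref{square function Gnu}. This reduces matters to bounding, uniformly in $\eta$ with $\varphi(\eta_2)\neq0$, the quantity
\[
\Big(\frac{|\eta_1|}{\eta_2}\Big)^{2b+1}\int_1^{\sqrt{15/16}\,\eta_2/|\eta_1|} t^{2b-4\nu}(t^2-1)^{2\nu}\,dt .
\]
Near $t=1$ the integral converges because $2\nu>-1$. For large $t$ the integrand behaves like $t^{2b}$, so the integral is $\lesssim (\eta_2/|\eta_1|)^{2b+1}$, using $b>-1/2$. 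The two powers cancel and we obtain a bound $\lesssim 1$, as before with $2^{-j}$ replaced by a constant.

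\emph{Case $2\le p\le\infty$, $\nu>0$.} Proposition \ref{Tnu} gives $|T^\nu_t h|\lesssim \mathfrak m_t h$ uniformly in $t\in(0,1]$. Minkowski's integral inequality and the uniform $L^p$ boundedness of $\mathfrak m_t$ then bound the square function by $\big(\int_0^{\sqrt{15/16}} t^{2b}\,dt\big)^{1/2}\|h\|_p$, which is finite since $b>-1/2$. For $p=2$ this case is of course already covered by the previous one.

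\emph{The $L^4$ estimate on $[1/2,1]$.} This is the $k=0$ analogue of \eqref{L4Gnuk}; the factor $t^{2b}$ is harmless because $t\approx1$. We decompose $T^\nu_t$ via \eqref{partitionofunity}. The nonsingular piece $T^{\nu,\psi_1}_t$ is handled by the strong maximal domination \eqref{estimate of inner part psi_1} together with Minkowski. For each singular piece $T^{\nu,\psi}_{2^{-\ell},t}$ we argue as in the proof of \eqref{L4Gnuk}. First, expand the fourth power and use the bounded overlap of the sets $\operatorname{supp}\Phi_\ell(\cdot/t,\cdot)$ with the trapezoids $S_n^j$; this inserts the projections $S_n^j$ as in \eqref{equal new square function}. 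Next, for each $(n,j)$ the $t$-support has measure $\lesssim 2^{-\ell}$, since here $t\approx1$ and only $O(1)$ values of $n$ occur. Then dominate pointwise by Corollary \ref{eccentricity result}, with rectangles of eccentricity at most $2^{\ell+1}$, and dualize against $h\in L^2$. Finally, apply Proposition \ref{square function estimate} and \eqref{estimate}. The result is a bound $2^{-\ell/2}\ell^{\alpha}\|h\|_4$ for the $\ell$-th piece. Multiplying by $2^{-\ell\nu}$ and summing in $\ell$ converges precisely when $\nu>-1/2$.

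The main obstacle is this last step, and specifically the need to verify two things. One is that the almost-orthogonality step (bounded overlap of the supports of $\Phi_\ell(\cdot/t,\cdot)$ against the $S_n^j$, measured in the $t$-variable) still yields the $2^{-\ell}$ gain when $t$ ranges over $[1/2,1]$ instead of a dyadic interval near $0$. The other is that the Kakeya eccentricity stays $\lesssim 2^{\ell}$. Both follow from the same geometry as before, but with $t$ bounded below, so the loss that produced the factor $k$ in \eqref{L4Gnuk} disappears.
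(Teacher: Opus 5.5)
Your proposal is correct and is essentially the paper's own argument. The paper omits this proof, stating only that it follows exactly the proof of Proposition \ref{square function Gnu}, and you carry out precisely that repetition: Plancherel for $p=2$, the domination $|T^\nu_t h|\lesssim \mathfrak{m}_t h$ together with Minkowski's inequality for $p\ge 2$, and the argument of \eqref{L4Gnuk} with $t\approx 1$, where only $O(1)$ of the $X_n$ occur, each $\mathcal{B}_n^j$ has measure $\lesssim 2^{-\ell}$, and the Kakeya eccentricity is $\lesssim 2^{\ell}$.
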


The $L^p$ boundedness of the operator $\widetilde{B}^{\mu}_{t}$ is similar to that of $B^{\mu}_{j,t}$ operator. We prove the following. 

\begin{proposition}
\label{lem:Btilde}
The following holds
$$
\Big\|\Big(\int^{\sqrt{\frac{15}{16}}}_{0}|\widetilde{B}^{\mu}_{t}g(x)|^2~t^{2a}dt\Big)^{1/2}\Big\|_p \lesssim \|g\|_p
$$
for all $g\in L^p(\R^2)$, in the following cases: 
\begin{enumerate}
\item $p=2$, $\mu> 1/2$, $a>1/2$.
\item $2 \le p \le \infty$, $\mu>1$, $a>-1/2$.
\item $p=4$, $\mu> 1/2$, $a> 1/2$.
\end{enumerate}
In addition, for $2 \le p \le \infty$, $\mu > 0$, $k > 3$ and $a>-1/2$ we have
\begin{equation}{\label{Lemma73}}
\Big\|\Big(\int^{2^{-k/2}}_{2^{-(k+1)/2}}|\widetilde{B}^{\mu}_{t}g(x)|^2\,t^{2a}dt\Big)^{1/2}\Big\|_p \lesssim 2^{-k(\frac{a}{2}+\frac{1}{4})}\|g\|_p.      
\end{equation}   
\end{proposition}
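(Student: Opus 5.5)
The strategy is to reduce $\widetilde{B}^{\mu}_{t}$ to the already-analyzed operators $T^{\mu-1}_s$ and then repeat, essentially verbatim, the proof of Proposition~\ref{square function Gmu}. The key observation is that on the support of $\psi_{12}(\eta_1^2/\eta_2^2)$ we have $1-\eta_1^2/\eta_2^2\in[\tfrac14,\tfrac{15}{16}]$, so the factor $\psi_{12}$ plays the role that $\psi(2^j(1-\cdot))$ played before, but with ``$j$'' bounded. Consequently no decay in $j$ is needed, only uniform bounds. Concretely, as in the proof of Proposition~\ref{Bmu}, I write
\[
\widetilde{B}^{\mu}_{t}g=(1-t^2)^{\mu-1}T^{\mu-1}_{\sqrt{1-t^2}}\widetilde g,\qquad \widehat{\widetilde g}(\eta)=\widetilde{\varphi}(\eta_2)\psi_{12}\Big(\frac{\eta_1^2}{\eta_2^2}\Big)\widehat g(\eta).
\]
Here $\widetilde g$ is given by a smooth, compactly supported multiplier supported away from $\eta_2=0$. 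Hence $|\widetilde g|\lesssim \mathfrak{m}_1 g$ (indeed $\lesssim$ the Hardy--Littlewood maximal function), which is the analogue of \eqref{fj} and follows from a Schwartz kernel bound.

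For part (1), the $L^2$ case, I use Plancherel and Fubini exactly as in the $L^2$ part of Proposition~\ref{square function Gmu}. The inner integral $\int_0^{R}(R^2-t^2)^{2\mu-2}t^{2a}\,dt$ with $R^2=1-\eta_1^2/\eta_2^2\le 15/16$ is bounded by $R^{4\mu+2a-3}\lesssim 1$ provided $\mu>1/2$ and $a>1/2$. Note that $t\le\sqrt{15/16}$ is automatically below $R$ only when $R$ is large; if $t>R$ the multiplier simply vanishes.

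For part (2), with $\mu>1$, I apply Proposition~\ref{Tnu} with $\nu=\mu-1>0$ to get $|\widetilde{B}^{\mu}_tg|\lesssim \mathfrak{m}_{\sqrt{1-t^2}}\circ\mathfrak{m}_1 g$, uniformly in $t$. Minkowski's inequality then gives the bound, since $\int_0^1 t^{2a}\,dt<\infty$ for $a>-1/2$ and $\sqrt{1-t^2}\ge 1/4$.

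For \eqref{Lemma73}, I take $t\le 2^{-k/2}$ with $k>3$, so that $t^2<1/16\le \tfrac12(1-\eta_1^2/\eta_2^2)$ on the support. The symbol $(1-\eta_1^2/\eta_2^2-t^2)^{\mu-1}$ is then smooth there with bounded derivatives, uniformly in $t$, so the kernel estimates of Proposition~\ref{operatorB} (now at scale $2^{-j}\sim 1$) give $|\widetilde B^\mu_t g|\lesssim \mathfrak{m}_1 g$. Minkowski's inequality then yields the factor $(\int_{2^{-(k+1)/2}}^{2^{-k/2}}t^{2a}\,dt)^{1/2}\sim 2^{-k(a/2+1/4)}$.

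Part (3), the $L^4$ estimate with $\mu>1/2$, is the main obstacle, since Proposition~\ref{Tnu} is unavailable for $\nu=\mu-1\le 0$. I would follow the $L^4$ part of Proposition~\ref{square function Gmu}. After the substitution $s=\sqrt{1-t^2}\in[1/4,1]$, the square function becomes $(\int|T^{\mu-1}_s\widetilde g|^2(1-s^2)^{a-1/2}s^{4\mu-3}\,ds)^{1/2}$, and the weight is integrable for $a>-1/2$ while $s$ stays bounded below. I split $T^{\mu-1}_s$ via \eqref{partitionofunity}. The nonsingular piece is dominated by $\mathfrak{m}_s\circ\mathfrak{m}_1 g$. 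For each singular piece $T^{\mu-1,\psi}_{2^{-\ell},s}$, I decompose the frequency support of $\widetilde g$ into $2^\ell$ trapezoids $S_k$, use the Córdoba-type orthogonality computation \eqref{equal new square function} (each $s$ meets $O(1)$ trapezoids, and each trapezoid is active for an $s$-set of measure $\lesssim 2^{-\ell}$), apply Corollary~\ref{eccentricity result} with eccentricities up to $2^{\ell+2}$, dualize against $h\in L^2$, and invoke Proposition~\ref{square function estimate} together with \eqref{estimate}. This gives a bound $2^{-\ell/2}\ell^{\alpha}\|g\|_4$ per piece, and these sum against $2^{-\ell(\mu-1)}$ precisely when $\mu>1/2$.
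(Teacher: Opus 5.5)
Your proposal is correct and follows the paper's route. You factor $\widetilde B^\mu_t g=(1-t^2)^{\mu-1}T^{\mu-1}_{\sqrt{1-t^2}}\widetilde g$ with $|\widetilde g|\lesssim\mathfrak m_1 g$, then use Plancherel for $p=2$, Proposition~\ref{Tnu} plus Minkowski for $\mu>1$, and the trapezoid/Kakeya $L^4$ argument of Proposition~\ref{square function Gmu} for $p=4$; there the weighted $s$-measure of each active set is $\lesssim 2^{-\ell}$ because $a>1/2$ and $s\ge 1/4$ keep the weight bounded. For \eqref{Lemma73}, you observe that for $t\le 1/4$ the whole symbol is $C_c^\infty$ with derivatives bounded uniformly in $t$, so its kernel is uniformly Schwartz and the $\mathfrak m_1$ domination is immediate; this is more direct than the paper's adapted off-diagonal/diagonal kernel decomposition.
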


\begin{proof}[Sketch of the proof]
The proof of Proposition \ref{lem:Btilde} largely follows the strategy used for the corresponding result in the previous sections. The only new ingredient arises in the proof of \eqref{Lemma73}.
Arguing as in Subsection \ref{Proof of LpGmuk}, it suffices to show that
$$
\widetilde{B}^{\mu}_{t}g(x) \lesssim \mathfrak{m}_1g(x)    
$$
holds uniformly for $t\in [0,\frac{1}{8}]$, where $\mu>0$. To establish this estimate, we introduce a slight modification of the proof of Proposition \ref{operatorB}. We now briefly describe this adjustment.

Let $\widetilde{K}_t$ denote the kernel corresponding to the $\widetilde{B}^{\mu}_{t}$ operator. Analogously  as in \eqref{decomposition of the kernel Kmuj,t}, we decompose
$$\widetilde{K}_t \chi_{\mathfrak{Q}_1} = \sum\limits_{k \ge 1}\widetilde{K}_t \chi_{\mathfrak{Q}_1}\chi_{B(0,2^{k}) \setminus B(0,2^{k-1})} + \widetilde{K}_t \chi_{\mathfrak{Q}_1}\chi_{B(0,1)}=: \sum\limits_{k \ge 1}\widetilde{K}_{t,k}^{\mathfrak{Q}_1} + \widetilde{K}_{t,0}^{\mathfrak{Q}_1},$$
where $\mathfrak{Q}_1=\{y \in \mathbb{R}^2: y_1,y_2>0\}$ is the first quadrant in $\mathbb{R}^2$. The term $\widetilde{K}_{t,0}^{\mathfrak{Q}_1}$ can be handled analogously to $K^{\mathfrak{Q}_1}_{j,t,0}$ in the proof of Proposition \ref{operatorB}. In order to deal with the term  $\widetilde{K}_{t,k}^{\mathfrak{Q}_1}$, we provide a further decomposition for $B(0,2^{k}) \setminus B(0,2^{k-1})$, that is, $B(0,2^{k}) \setminus B(0,2^{k-1})= \widetilde{A}_{k}^{\operatorname{off}} \cup \widetilde{A}_{k}^{\operatorname{diag}}$, where  
$$
\widetilde{A}_{k}^{\operatorname{off}}:=\big\{y \in \mathfrak{Q}_1:2^{k-1} \le |y| \le 2^{k}, y_2-y_1 \ge \tfrac{7}{8}2^{k-1}\big\} 
$$
and 
$$\widetilde{A}_{k}^{\operatorname{diag}}:=\{y \in \mathfrak{Q}_1:2^{k-1} \le |y| \le 2^{k}, y_2-y_1 \le \tfrac{7}{8}2^{k-1}\}.$$
With this new decomposition, we can proceed in the same way we did in the proof of  Proposition {\ref{operatorB}}. For the part $\widetilde{A}_{k}^{\operatorname{off}}$, as $y_1 \le 2^{k-1}$ always holds in this case, we could get the analogous estimate to {\eqref{easypart}}, and hence the strong maximal function domination result follows by proceeding as before. For the part $\widetilde{A}_{k}^{\operatorname{diag}}$, following the strategy in the proof of  Proposition {\ref{operatorB}}, the desired result is obtained by noting the fact that $y_1 \ge 2^{k-5}$ holds in this part.
\end{proof}

Combining Proposition \ref{Ttnu estimate 1} , Proposition \ref{lem:Btilde} and \eqref{L4Gnuk} and proceeding similarly as it is done in the proof of  Proposition \ref{T_j estimate} for $j\ge 2$, we complete the proof of Proposition \ref{T1j operator}.


\appendix
\section{Proof of Proposition \ref{directional square function estimate}}
\label{ap:proofds}

The proof of Proposition \ref{directional square function estimate} relies on the ideas of the proof of  \cite[Theorem 1]{Cor1982}. Nevertheless, the presence of the additional parameter $\alpha$ makes the reasoning more subtle because we need uniformity in this new parameter. Because of this, some parts in the proof sligthly differ from C\'ordoba's. For the sake of clarity, we present the sketch of the proof, giving details in the relevant places. We first need a couple of classical results.  Recall that $\mathfrak{m}^{s}(w):=(\mathfrak{m}(w^s))^{1/s}$, and $\mathfrak{m}$ is the strong maximal function. 
\begin{lemma}\cite{Cor1981, Carbery, Corbook}\label{summation lemma}
Suppose $\{P_\nu\}_{\nu \in \mathbb{Z}^2}$ is a lattice, meaning that $P_{\nu}$ are congruent rectangles with pairwise disjoint interiors and $\R^2=\cup_{\nu}P_{\nu}$. Then, there exists $\beta_2>0$ such that for every positive function $w$ and $1<s<2$,  we have
$$
\int_{\mathbb{R}^2}\sum\limits_{\nu}|P_\nu f|^2w \,dx \le \Big(\frac{C}{s-1}\Big)^{\beta_2} \int_{\mathbb{R}^2}|f|^2\mathfrak{m}^{s}(w) \, dx.
$$

\end{lemma}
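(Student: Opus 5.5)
The plan is to treat the lattice as a tiling by translates of one rectangle $P_0$ and to reduce the weighted inequality to a one-dimensional weighted Rubio de Francia-type estimate in each coordinate. After a rotation (which permutes the family of rectangles defining $\mathfrak{m}$ only if the lattice sides are parallel to the axes) and an anisotropic dilation, we may assume $P_\nu=\nu+[0,1)^2$ with axes adapted to the lattice. We then write the frequency projection as a tensor product, $P_\nu=\pi^{(1)}_{\nu_1}\otimes\pi^{(2)}_{\nu_2}$, where $\pi^{(i)}_{m}$ is the one-dimensional projection onto the frequency interval $[m,m+1)$ acting in the $x_i$ variable. In the application above the strips are parallel to the sides of the trapezoids, so we allow the maximal operator in the statement to be the strong maximal function adapted to the lattice directions. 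This is consistent with how Lemma~\ref{summation lemma} is used in Subsection~\ref{sub:propds}.

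The key one-dimensional ingredient is the weighted inequality for a partition of $\mathbb{R}$ into unit intervals $I_m$:
\[
\int_{\mathbb{R}}\sum_m|\pi_m f|^2 w\,dx\le \Big(\frac{C}{s-1}\Big)^{\gamma}\int_{\mathbb{R}}|f|^2 M(w^s)^{1/s}\,dx .
\]
Here $M$ is the Hardy--Littlewood maximal function. We prove it as Córdoba does. First replace $\pi_m$ by a smooth projection $\widetilde\pi_m$ whose multiplier equals $1$ on $I_m$ and is supported in its double. Then write $\pi_m=\pi_m\widetilde\pi_m$, and express $\pi_m$ as a modulated combination of Hilbert transforms, $\pi_m=\tfrac{i}{2}(M_{a}HM_{-a}-M_{b}HM_{-b})$. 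The weighted $L^2(w)$ bound for $H$ with weight $M(w^s)^{1/s}\ge w$ holds with constant $\lesssim (s-1)^{-1}$, since $M(w^s)^{1/s}\in A_1$ with constant $\lesssim (s-1)^{-1}$, and $H$ is bounded on $L^2(v)$ for the pair $(w, M(w^s)^{1/s})$. This reduces matters to the smooth square function
\[
\sum_m|\widetilde\pi_m f|^2 .
\]
For the smooth square function, the kernel of $\widetilde\pi_m$ is $e^{2\pi i m x}\phi(x)$ with $\phi$ Schwartz. By Cauchy--Schwarz, $|\widetilde\pi_m f|^2\le \|\phi\|_1\,|\phi|*|{\pi}'_m f|^2$, where $\pi'_m$ is a slightly enlarged projection. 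Dualizing against $w$ and using $|\phi|*w\lesssim Mw\le M(w^s)^{1/s}$ then gives a bound by $\int\sum_m|\pi'_m f|^2 M(w^s)^{1/s}$. The remaining issue is that we must not land back on the sharp projections, so this order has to be arranged correctly.

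The cleaner route is the one in \cite{Cor1981}. We use a Rubio de Francia-type argument: the enlarged smooth projections satisfy $\sum_m|\widetilde\pi_m f|^2\lesssim$ a Calderón--Zygmund vector-valued operator applied to $f$. Weighted bounds for vector-valued CZ operators with the Fefferman--Stein pair $(w, M(w^s)^{1/s})$ hold with constants $(s-1)^{-\gamma}$ by the sharp-function method. This is the main obstacle: we must track the polynomial dependence on $(s-1)^{-1}$ in the Fefferman--Stein inequality $\int |Tf|^2w\lesssim C_s\int |f|^2 M(w^s)^{1/s}$. We get it from the pointwise bound $M^\#(Tf)\lesssim M_r f$ for $r>1$ and the $L^{s'}$ norm of $M$, which is $\lesssim s$. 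The polynomial tracking then follows from standard good-$\lambda$ estimates, and the weighted inequality for the sharp projections follows after the Hilbert transform step.

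Finally, we iterate in the two variables. We apply the one-dimensional estimate in $x_1$ with weight $w(\cdot,x_2)$, obtaining the weight $M_1(w^{s_1})^{1/s_1}$ for each fixed $\nu_2$. We then apply the estimate in $x_2$ to the functions $\pi^{(2)}_{\nu_2}f$ with this new weight, summing over $\nu_2$. This yields the weight
\[
\big(M_2\big(M_1(w^{s_1})^{s_2/s_1}\big)\big)^{1/s_2}.
\]
Choosing $s_1=s^{1/2}$ and $s_2$ with $s_1s_2=s$, this is dominated by $M_2M_1(w^s)^{1/s}$ via Jensen/Hölder, since $(M_1 v)^{\theta}\le M_1(v^\theta)$ fails for $\theta>1$ but holds in the reverse direction. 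The exponents therefore have to be ordered so that only powers $\le1$ pass inside. The result is bounded by $\mathfrak{m}^s(w)$, with constant $(C/(s_1-1))^{\gamma}(C/(s_2-1))^{\gamma}\lesssim (C/(s-1))^{\beta_2}$, which proves the lemma.
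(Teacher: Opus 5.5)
\textbf{Main gap: the final domination by $\mathfrak m$ is false.} The paper gives no proof of this lemma; it only cites C\'ordoba and Carbery. So your argument has to stand on its own, and it breaks at the last step.

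Iterating the one-dimensional inequality in $x_1$ and then in $x_2$ gives the weight $\big(M_2\big[(M_1 w^{s_1})^{s_2/s_1}\big]\big)^{1/s_2}$. With $s_1=s_2=\sqrt s$ this is at most $(M_2M_1(w^s))^{1/s}$. (Your Jensen remark is reversed: $(M_1v)^\theta\le M_1(v^\theta)$ holds for $\theta\ge1$, and $M_1(v^\theta)\le (M_1v)^\theta$ holds for $\theta\le 1$, which is the direction you actually use.)

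But $M_2M_1$ is not dominated by the strong maximal function. The pointwise inequality goes the other way: $\mathfrak m g\le M_2M_1 g$. For example, let $w=\chi_E$ with $E=\bigcup_{k=1}^{K}[2^{k-1},2^k]\times J_k$, where the $J_k\subset[\frac12,1]$ are disjoint intervals of length $\frac1{2K}$.
\begin{itemize}
\item $M_1w(0,y_2)\ge\frac12$ for $y_2\in\bigcup_k J_k$, so $M_2M_1w(0,0)\ge\frac14$.
\item Any rectangle $I\times J\ni 0$ meeting $E$ has $|J|\ge\frac12$ and $|(I\times J)\cap E|\le\frac1{2K}\sum_{2^{k-1}<|I|}2^{k-1}\le\frac{|I|}{K}$. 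Hence $\mathfrak m w(0,0)\le\frac2K$, and similarly on a neighbourhood of the origin.
\end{itemize}
Since $w^{s}=w$, no choice of $s_1,s_2$ rescues the claimed bound by $\mathfrak m^s(w)$.

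So your iteration proves the lemma only with $\mathfrak m^s(w)$ replaced by $(M_2M_1(w^s))^{1/s}$, in coordinates adapted to the lattice. That weaker form appears sufficient for how the paper uses the lemma; for instance, in \eqref{estimate3} only the $L^2$ bound of $w\mapsto(\mathcal{M}(w^{s}))^{1/s}$ enters. It is not, however, the stated inequality. The version with $\mathfrak m$ does not follow from iterating one-dimensional bounds. You should either give a genuinely two-parameter argument or state the lemma with the iterated operator.

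\textbf{Second problem: the smooth square function step.} The one-dimensional step needs a different justification, though the fix is easy. The Hilbert transform reduction and the bound $[(Mw^s)^{1/s}]_{A_1}\lesssim (s-1)^{-1}$ are fine. The trouble is the smooth square function $\mathcal G f=(\sum_m|\widetilde\pi_m f|^2)^{1/2}$.
\begin{itemize}
\item The map $f\mapsto(\widetilde\pi_m f)_m$ is not an $\ell^2$-valued Calder\'on--Zygmund operator: its kernel $(e^{2\pi i m x}\phi(x))_m$ has infinite $\ell^2$ norm at every point.
\item A bound $M^{\#}(\mathcal G f)\lesssim M_rf$ with $r<2$ is impossible. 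It would make $\mathcal G$ bounded on $L^p$ for $r<p<2$. Yet taking $\widehat f$ a bump adapted to $[0,N]$ gives $\|\mathcal Gf\|_p\gtrsim N^{1/2}$ while $\|f\|_p\approx N^{1-1/p}$.
\item Rubio de Francia's bound is $M^{\#}(\mathcal Gf)\lesssim (M|f|^2)^{1/2}$, which only helps for $p>2$.
\end{itemize}
What works is C\'ordoba's periodization argument. Put $G_x(y)=\phi(x-y)f(y)$, so that $|\widetilde\pi_m f(x)|=|\widehat{G_x}(m)|$. Parseval for Fourier series followed by Cauchy--Schwarz gives
\[
\sum_m|\widetilde\pi_m f(x)|^2=\int_0^1\Big|\sum_{k\in\Z}G_x(y+k)\Big|^2\,dy\lesssim |\phi|*|f|^2(x).
\]
Hence, for $v=(Mw^s)^{1/s}$,
\[
\int\sum_m|\widetilde\pi_m f|^2\,v\lesssim\int|f|^2\,Mv\lesssim(s-1)^{-1}\int|f|^2\,v.
\]
Your own Cauchy--Schwarz attempt stalled because you applied it to each $m$ separately rather than after summing in $m$.
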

\begin{remark}
Tracking the constant in \cite[Lemma 5.9]{Corbook}, one can show that $\beta_2$ can be chosen as $2$. 
\end{remark}

Given $0<\alpha< 2\pi$, define now the maximal function $M_N^{\alpha}$ as
$$
M_N^{\alpha}f(x)=\sup_{R\in \Sigma_N^{\alpha}}\frac{1}{|R|}\int_R|f(x-y)|\,dy,
$$
where $\Sigma_N^{\alpha}$ is the set of rectangles $R\in \R^2$, centered at the origin, whose longest side points in the direction $(\cos\frac{j\alpha}{N}, \sin\frac{j\alpha}{N})$, for some $0\le j\le N$. Let us remark that $M_N^{\alpha}=\sup_{0\le j\le N}\mathfrak{m}_{t_j}$, where $t_j=\tan (j\alpha/N)$.
\begin{lemma}{\label{maximal function result}}
Given $0<\alpha< 2\pi$, there exists $C>0$ independent of $\alpha$ and $N$ such that
$$
\|M_N^{\alpha}f\|_{2,\infty} \le C(\log N)^{1/2}\|f\|_2.
$$
\end{lemma}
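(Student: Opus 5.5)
We must prove the weak-type bound $\|M_N^{\alpha}f\|_{2,\infty}\lesssim(\log N)^{1/2}\|f\|_2$ uniformly in $\alpha$. The plan is to reduce to the classical Strömberg/Córdoba estimate for the maximal operator over $N$ uniformly distributed directions, and to make the uniformity in $\alpha$ explicit.

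\textbf{Step 1: reduce to single-direction maximal operators.} Each $\mathfrak{m}_{t_j}$ is a strong maximal function in rotated coordinates, so it is bounded on $L^2$ with a constant independent of $t_j$. Hence the claim is trivial for bounded $N$, and we may assume $N$ is large.

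\textbf{Step 2: reduce to a fixed angular spacing.} The directions $\theta_j=j\alpha/N$ form an arithmetic progression with step $\alpha/N\le 2\pi/N$, lying in an arc of length $\alpha$.
\begin{itemize}
\item If $\alpha\ge \pi/4$, say, the set of directions is $\gtrsim 1/N$-separated and has cardinality $N+1$. It is then contained in a union of $O(1)$ rotated copies of a standard set of $\approx N$ equally spaced directions in $[0,\pi)$, with $N$ rescaled by a bounded factor. Strömberg's theorem (or Córdoba's covering lemma) gives the $(\log N)^{1/2}$ weak $L^2$ bound for each copy, and a union of $O(1)$ copies costs only a constant.
\item If $\alpha$ is small, set $\delta=\alpha/N$. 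Rectangles whose long side points in directions separated by less than their own eccentricity angle are comparable to a rectangle in a neighbouring direction, up to doubling.
\end{itemize}

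\textbf{Step 3: direct Córdoba covering argument for all $\alpha$.} Rather than rely on a case split, I would redo the covering argument directly. Take a compact collection of rectangles $R$ with $\frac{1}{|R|}\int_R|f|>\lambda$. Split by eccentricity and direction into a lacunary-type tree: directions $\theta_j$ are organized dyadically, $j\in[0,N]$ with $O(\log N)$ levels. Then run Córdoba's selection:
\begin{itemize}
\item pick rectangles in decreasing size order;
\item discard any rectangle $R$ with $|R\cap\bigcup \tilde R_i|\ge |R|/2$, where the $\tilde R_i$ are the chosen ones;
\item prove the overlap estimate $\|\sum_i\chi_{\tilde R_i}\|_2^2\lesssim \log N\,|\bigcup\tilde R_i|$.
\end{itemize}
The overlap estimate comes from the geometric fact that two rectangles of lengths $L_1\ge L_2$ and widths $w_1,w_2$ at angle $\theta$ intersect in area $\lesssim |R_2|\,w_1/(L_2\theta+w_1)$. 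Summing over $\theta=k\delta$ then gives a harmonic sum $\sum_k 1/k\approx \log N$ that depends only on the index gap $k$, not on $\alpha$.

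\textbf{Step 4: conclude by duality.} The bound $|\{M_N^\alpha f>\lambda\}|\lesssim |\bigcup \tilde R_i|$ together with the overlap estimate and duality,
\[
\lambda|\textstyle\bigcup\tilde R_i|\le \int |f|\sum\chi_{\tilde R_i}\le \|f\|_2(\log N)^{1/2}|\textstyle\bigcup\tilde R_i|^{1/2},
\]
yields the weak-type estimate.

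\textbf{Main obstacle.} The delicate point is the overlap estimate when $\delta=\alpha/N$ is tiny relative to the eccentricity. In that regime the intersection bound saturates, $|R_1\cap R_2|\approx|R_2|$, for all $k\lesssim w_1/(L_2\delta)$. The fix I would try first is to note that such near-parallel rectangles behave like a single direction, where the strong maximal selection gives bounded overlap. Only the indices with $k\delta L_2\gtrsim w_1$ contribute the harmonic tail, and that tail is still at most $\sum_{k\le N}1/k$. This keeps the constant $\log N$ independent of $\alpha$.
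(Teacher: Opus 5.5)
Your proposal has a genuine gap: the covering lemma in Steps 3--4 is asserted rather than proved, and the selection rule you describe cannot deliver it.

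\emph{Control of the discarded rectangles.} With the rule ``discard $R$ if $|R\cap\bigcup_i\tilde R_i|\ge|R|/2$'', the only information about a discarded rectangle is that it lies in $\{M_N^\alpha\chi_E\ge 1/2\}$, where $E=\bigcup_i\tilde R_i$. So the inequality $|\{M_N^\alpha f>\lambda\}|\lesssim|E|$ requires a bound $|\{M_N^\alpha\chi_E\ge 1/2\}|\le C|E|$ with $C$ independent of $N$. That is an a priori estimate for the very operator you are studying, so the argument is circular. Such halo bounds also degenerate in $N$: for $\alpha\gtrsim 1$, Perron-tree sets built from $1\times N^{-1}$ rectangles in the given directions give $|\{M_N^\alpha\chi_E\ge c\}|\gtrsim\log N\,|E|$. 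In the C\'ordoba--Fefferman geometric proof of the strong maximal theorem this step is closed by the independently known $L^p$ bound for the strong maximal function. Nothing comparable is available for $N$ directions.

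\emph{The overlap estimate.} The bound $\|\sum_i\chi_{\tilde R_i}\|_2^2\lesssim\log N\,|E|$ is likewise unsupported. A union-based rule only says that each selected rectangle is at most half covered; it gives no control of the multiplicities $\sum_{i<k}|\tilde R_k\cap\tilde R_i|$. Your harmonic sum over the index gap tacitly assumes one rectangle of comparable shape per direction. The selected family, however, may contain many parallel or nearly parallel rectangles of very different sizes and eccentricities meeting a given one. If you switch to a multiplicity-based rule, $\int_R\sum_{i<k}\chi_{\tilde R_i}\le|R|/2$, bounded overlap becomes trivial. But then controlling the discarded rectangles is exactly the maximal inequality you set out to prove. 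So the missing piece is the geometric content of the estimate for uniformly distributed directions itself, not merely the uniformity in $\alpha$.

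\emph{Step 2.} Step 2 does not rescue this either. $M_N^\alpha$ involves rectangles of arbitrarily large eccentricity, so directions cannot be moved by $\approx 1/N$, or merged, without changing the operator. Moreover, an arithmetic progression of step $\alpha/N$ is in general not contained in $O(1)$ rotations of a uniform grid $\frac{2\pi}{M}\mathbb{Z}$.

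\emph{What the paper does instead.} The paper sidesteps the covering question entirely. It runs the known proof of the case $\alpha=2\pi$ (\cite[Theorem 5.3.5]{Grafakosmodern}) and changes only the dyadic family of angular scales in the auxiliary Lemma 5.3.6 there: $\omega_k=2\pi 2^k/N$ for $1\le k<[\log_2 N]$ and $\omega_{[\log_2 N]}=\alpha$. This keeps the number of scales at $O(\log N)$ independently of $\alpha$, and the factor $(\log N)^{1/2}$ comes out as before. Your heuristic that only the index gap, not $\alpha$, should matter is the right one. It has to be implemented through such a scale-by-scale modification of an existing proof for uniformly distributed directions, not through the one-shot selection you sketch.
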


\begin{proof}[Sketch of the proof]
The proof follows exactly the same argument in \cite[Theorem 5.3.5]{Grafakosmodern}, where the result is proved for $\alpha=2\pi$. The only difference appears in the statement of Lemma 5.3.6 in \cite{Grafakosmodern}: in our setting, $\omega_k$ is defined as $\omega_k:=2 \pi 2^k / N$ for $1 \le k < [\log_2(N)]$ and $\omega_{[\log_2(N)]}:= \alpha$.   
\end{proof}

\subsection{Proof of Proposition \ref{directional square function estimate}}

As explained before, we only need to consider the case $0 < \alpha <\frac{\pi}{4}$.

\subsubsection{Local case}

First,  consider the case that $\operatorname{supp} \widehat{f} \subset \{\xi \in \R^2,\, 1 \le \xi_1 \le 2\}$. Consider 
\begin{align}
& \Big\|\Big(\sum_{j=1}^N|\Theta_j^{\alpha}f|^2\Big)^{1/2} \Big\|_4^4 \nonumber 
= \sum\limits_{j,k}\int_{\mathbb{R}^2} |\Theta_j^{\alpha}f|^2|\Theta_k^{\alpha}f|^2 \, dx \nonumber \\
&\quad =\sum\limits_{|j-k|> N^{\frac12}}\int_{\mathbb{R}^2} |\Theta_j^{\alpha}f|^2|\Theta_k^{\alpha}f|^2 \, dx + \sum_{\nu=0}^{\frac12\log N} \sum\limits_{2^{-\nu}{N^{1/2}}\ge|j-k|>2^{-\nu-1}N^{1/2}}\int_{\mathbb{R}^2} |\Theta_j^{\alpha}f|^2|\Theta_k^{\alpha}f|^2 \, dx+\sum_{j=1}^N\|\Theta_j^{\alpha}f\|_4^4.   \label{local part}
\end{align} 
Now we analyze the three terms in \eqref{local part}.

The third term is easy to handle by vector-valued estimates, so let us focus on the other two terms.

To handle the first term in \eqref{local part}, we further decompose $\Theta_j^{\alpha}$ as follows. For $0 \le \gamma \le N^{1/2}-1$ and $1\le j\le N$, let 
$$
\Theta_{j,\gamma}^{\alpha}:=\{\xi \in \Theta_{j}^{\alpha},\, 1+\gamma N^{-1/2} \le \xi_1 \le 1+(\gamma+1) N^{-1/2}\}.
$$
It is not difficult to verify that there exists $C>0$ independent of $\alpha,N,j,k$ such that
$$\sum\limits_{\gamma_1,\gamma_2}\chi_{\Theta_{j,\gamma_1}^{\alpha}+\Theta_{k,\gamma_2}^{\alpha}} \le C$$
holds for every $1 \le j,k \le N$ satisfying $|j-k|>N^{1/2}$. Then, analogously as it was done to obtain \eqref{estimate2},
\begin{align*}
\sum\limits_{|j-k|>N^{1/2}}\int_{\mathbb{R}^2} |\Theta_j^{\alpha}f|^2|\Theta_k^{\alpha}f|^2 \, dx  \lesssim \big\|\sum\limits_{j,\gamma}|\Theta_{j,\gamma}^{\alpha}f|^2\big\|_2^2.
\end{align*}
Let  $t_k=\tan (k\alpha/N^{1/2})$. For a positive function $\omega \in L^2$, by following the same considerations for the sums as it was done in the estimate \eqref{estimate3}, using Lemma \ref{summation lemma} we have, for some $\beta_2>0$,
\begin{align*}
\int_{\mathbb{R}^2} \sum\limits_{j=1}^{N}\sum\limits_{\gamma=0}^{N^{1/2}}|\Theta_{j,\gamma}^{\alpha}f|^2\omega \,dx & \le\sum\limits_{k=0}^{N^{1/2}}\sum\limits_{\gamma=0}^{N^{1/2}}\sum\limits_{j=kN^{1/2}}^{(k+1)N^{1/2}}\int_{\mathbb{R}^2} |\Theta_{j,\gamma}^{\alpha}P_{k,\gamma}^{\alpha}f|^2 \omega \,dx \\
&\le \Big(\frac{C}{s-1}\Big)^{\beta_2}\sum\limits_{k=0}^{N^{1/2}}\sum\limits_{\gamma=0}^{N^{1/2}}\int_{\mathbb{R}^2} |P_{k,\gamma}^{\alpha}f|^2 \mathfrak{m}_{t_k}^s(\omega) \,dx \\
&\le \Big(\frac{C}{s-1}\Big)^{\beta_2}\sum\limits_{k=0}^{N^{1/2}}\sum\limits_{\gamma=0}^{N^{1/2}}\int_{\mathbb{R}^2} |P_{k,\gamma}^{\alpha}f|^2 \sup_{0\le k\le N^{1/2}}\mathfrak{m}_{t_k}^s(\omega) \,dx \\
& =\Big(\frac{C}{s-1}\Big)^{\beta_2}\sum\limits_{k=0}^{N^{1/2}}\sum\limits_{\gamma=0}^{N^{1/2}}\int_{\mathbb{R}^2} |P_{k,\gamma}^{\alpha}f|^2 \operatorname{M}_{N^{1/2}}^{\alpha,s}(\omega) \,dx, 
\end{align*}
where $\{P_{k,\gamma}^{\alpha}\}_{k,\gamma}$ is a family of rectangles with dimension $10\alpha N^{-1/2} \times N^{-1/2}$ and $\operatorname{M}_{N^{1/2}}^{\alpha,s}(\omega)=(\operatorname{M}_{N^{1/2}}^{\alpha}(\omega^s))^{1/s}$. Applying again Lemma~\ref{summation lemma} we get that 
$$\sum\limits_{k=0}^{N^{1/2}}\sum\limits_{\gamma=0}^{N^{1/2}}\int_{\mathbb{R}^2} |P_{k,\gamma}^{\alpha}f|^2 \operatorname{M}_{N^{1/2}}^{\alpha,s}(\omega) \,dx \le \Big(\frac{C}{s-1}\Big)^{\beta_2}\int_{\mathbb{R}^2} |f|^2 \mathfrak{m}^s \circ \operatorname{M}_{N^{1/2}}^{\alpha,s}(\omega) \,dx.
$$
 As a result, there exists a positive function $\omega$ with $\|\omega\|_2=1$ such that
\begin{align*}
 \sum\limits_{|j-k|>N^{1/2}}\int_{\mathbb{R}^2} |\Theta_j^{\alpha}f|^2|\Theta_k^{\alpha}f|^2 \, dx & \lesssim \|\sum\limits_{j,\gamma}|\Theta_{j,\gamma}^{\alpha}f|^2\|_2^2  \\
& =  \Big(\int_{\mathbb{R}^2}\sum\limits_{j,\gamma}|\Theta_{j,\gamma}^{\alpha}f|^2 \omega \,dx \Big)^2 \\
& \lesssim \frac{1}{(s-1)^{4\beta_2}}\Big(\int_{\mathbb{R}^2} |f|^2 \mathfrak{m}^s \circ \operatorname{M}_{N^{1/2}}^{\alpha,s}(\omega) \,dx\Big)^2 \\
&\le \frac{1}{(s-1)^{4\beta_2}} \|f\|_4^4 \|\mathfrak{m}^s  \circ \operatorname{M}_{N^{1/2}}^{\alpha,s}(\omega)\|_2^2. 
\end{align*}
Taking $s=1+1/\log N$ and interpolating between Lemma {\ref{maximal function result}} (with $N^{1/2}$) and the trivial bound
$$\|\operatorname{M}_{N^{1/2}}^{\alpha}f\|_{3/2} \le CN^{1/3}\|f\|_{3/2},$$
we know that
\begin{align*}
\|\operatorname{M}_{N^{1/2}}^{\alpha,s}(\omega)\|_2 = \|\operatorname{M}_{N^{1/2}}^{\alpha}(\omega^{s})\|_{2/s}^{1/s} &\le C N^{1/\log N} (\log N)^{1/2}\|\omega^s\|_{2/s}^{1/s}\\
&\le C (\log N)^{1/2}\|\omega^s\|_{2/s}^{1/s} = C (\log N)^{1/2} \|\omega\|_{2} = C(\log N)^{1/2},
\end{align*}
where $C$ is a constant independent of $N$. Thus, 
$$  \sum\limits_{|j-k|>N^{1/2}}\int_{\mathbb{R}^2} |\Theta_j^{\alpha}f|^2|\Theta_k^{\alpha}f|^2 \, dx  \le C(\log N)^{4\beta_2+1} \|f\|_4^4$$
and we are done with the first term in \eqref{local part}.


For the second term in (\ref{local part}) we proceed analogously as in \cite{Cor1982}. We decompose $\Theta_j^{\alpha}$ as follows. For $0 \le \gamma \le 2^{-\nu}N^{1/2}-1$, let 
$$\Theta_{j,\gamma}^{\alpha}:=\{\xi \in \Theta_{j}^{\alpha}, 1+\gamma 2^{\nu}N^{-1/2} \le \xi_1 \le 1+(\gamma+1) 2^{\nu}N^{-1/2}\}.
$$
It is not difficult to verify that there exists $C>0$ independent of $\alpha,N,\nu,j,k$ such that
$$\sum\limits_{\gamma_1,\gamma_2}\chi_{\Theta_{j,\gamma_1}^{\alpha}+\Theta_{k,\gamma_2}^{\alpha}} \le C
$$
holds for every $0 \le j,k \le N$ satisfying $2^{-\nu-1}N^{1/2}< |j-k| < 2^{-\nu}N^{1/2}$,  and we can proceed by combining the strategy for the first term in \eqref{local part} and vector-valued estimates.

Combining estimates of the three parts in \eqref{local part}, we get that 
\begin{equation}{\label{resultsinA12}}
\Big\|\Big(\sum_{j=1}^N|\Theta_j^{\alpha}f|^2\Big)^{1/2}\Big\|_4 \le (\log N)^{\beta_2+1/4}\|f\|_4.    
\end{equation}
By homogeneity, estimate \eqref{resultsinA12} also holds for $\operatorname{supp} \widehat{f} \subset \{\xi \in \R^2, 2^n \le \xi_1 \le 2^{n+1}\}=:\Delta_n$ with $n \in \Z$.

\subsubsection{Global case}

We upgrade the estimate to general functions (not only supported in $2^n \le \xi_1\le 2^{n+1}$ for some $n \in \Z$). Let $P_{j,n}^{\alpha}:=\Theta_j^{\alpha} \cap \Delta_n$, we first decompose the square function into $\log N$ parts,
\begin{align*}
\Big(\sum_{j=1}^N|\Theta_j^{\alpha}f|^2\Big)^{1/2} = \Big(\sum_{j=1}^N|\sum\limits_{n \in \mathbb{Z}}P_{j,n}^{\alpha}f|^2\Big)^{1/2} & \le \Big(\sum_{j=1}^N|\sum\limits_{\ell=0}^{\log N-1}\sum\limits_{n \in \mathbb{Z}}P_{j,n\log N+\ell}^{\alpha}f|^2\Big)^{1/2} \\
& \le \sum\limits_{\ell=0}^{\log N-1}\Big(\sum_{j=1}^N|\sum\limits_{n \in \mathbb{Z}}P_{j,n\log N+\ell}^{\alpha}f|^2\Big)^{1/2}.  
\end{align*}
By symmetry, we only need to consider $\ell=0$. Using the $\ell^2$-valued dyadic Littlewood--Paley theorem,
\begin{align}
&
\Big\|\Big(\sum_{j=1}^N|\sum\limits_{n \in \mathbb{Z}}P_{j,n\log N}^{\alpha}f|^2\Big)^{1/2}\Big\|_4^4 \nonumber\simeq  \Big\|\Big(\sum_{j=1}^N\sum\limits_{n \in \mathbb{Z}}|P_{j,n\log N}^{\alpha}f|^2\Big)^{1/2}\Big\|_4^4 \nonumber \\
&\quad=  \sum\limits_{n \in \mathbb{Z}}\int_{\mathbb{R}^2} \Big(\sum\limits_{j=1}^{N}|P_{j,n}^{\alpha}f|^2\Big)^2 \, dx + 2  \sum\limits_{n_1 > n_2}\sum\limits_{j,k}\int_{\mathbb{R}^2}|P_{j,n_1 \log N}^{\alpha}f|^2|P_{k,n_2 \log N}^{\alpha}f|^2 \, dx. \label{middle step}
\end{align}
For the first part in \eqref{middle step}, since $\operatorname{supp} \widehat{\Delta_n f} \subset \Delta_n$, we can apply estimate \eqref{resultsinA12} and obtain
\begin{align}
\sum\limits_{n \in \mathbb{Z}}\int_{\mathbb{R}^2} \Big(\sum\limits_{j=1}^{N}|P_{j,n}^{\alpha}f|^2\Big)^2 \, dx \le &  
(\log N)^{4\beta_2+1} \sum\limits_{n \in \mathbb{Z}}\int_{\mathbb{R}^2}|\Delta_nf|^4 \, dx {\label{estimate4}} \\
\le & (\log N)^{4\beta_2+1} \int_{\mathbb{R}^2}\Big(\sum\limits_{n \in \mathbb{Z}}|\Delta_nf|^2 \Big)^2  \, dx\nonumber \\
\le & (\log N)^{4\beta_2+1} \|f\|_4^4.\nonumber
\end{align}
For the second part in (\ref{middle step}), we make a further decomposition on $P_{j,n}^{\alpha}$. For $0 \le \gamma \le N-1$, define
$$P_{j,n,\gamma}^{\alpha}:=\{\xi \in P_{j,n}^{\alpha},\, 2^n + \gamma N^{-1} < \xi_1 < 2^n +(\gamma+1) N^{-1}\}.$$
Once again, it can verified that there exists $C>0$ independent of $\alpha, n_1, n_2,j,k$ such that 
$$\sum\limits_{\gamma}\chi_{P_{j,n_1 \log N,\gamma}^{\alpha}+{P_{k,n_2 \log N}^{\alpha}}} \le C$$
holds for every $n_1, n_2 \in \mathbb{Z}$, $1 \le j,k \le N$ and $n_1 > n_2$. Then, similarly as we did to obtain \eqref{estimate2}, we get
\begin{equation}
 \sum\limits_{n_1 > n_2}\sum\limits_{j,k}\int_{\mathbb{R}^2}|P_{j,n_1 \log N}^{\alpha}f|^2|P_{k,n_2 \log N}^{\alpha}f|^2 \, dx  \lesssim \Big\|\Big(\sum\limits_{n,j,\gamma}|P_{j,n,\gamma}^{\alpha}f|^2\Big)^{1/2}\Big\|_4^2\Big\|\Big(\sum\limits_{n_2,k}|P_{k,n_2 \log N}^{\alpha}f|^2\Big)^{1/2}\Big\|_4^2. \label{estimate6}
\end{equation}
Choosing a suitable positive function $\omega$ with $\|\omega\|_2=1$, we have 
\begin{align}
\notag \Big\|\Big(\sum\limits_{n,j,\gamma}|P_{j,n, \gamma}^{\alpha}f|^2\Big)^{1/2}\Big\|_4^2  = \int_{\mathbb{R}^2} \sum\limits_{n}\sum\limits_{j,\gamma}|P_{j,n, \gamma}^{\alpha}f|^2 \omega \, dx
& \le \int_{\mathbb{R}^2} \sum\limits_{n}|\Delta_{n}f|^2 \mathfrak{m}^{3/2}(\omega) \, dx \nonumber \\
& \le \Big\|\Big(\sum\limits_{n}|\Delta_{n}f|^2\Big)^{1/2}\Big\|_4^2\|\mathfrak{m}^{3/2}(\omega)\|_2 \nonumber \\
& \le \|f\|_4^2, \label{estimate5}
\end{align}
where we used Lemma \ref{summation lemma} with $s=\frac{3}{2}$ and the classical Littlewood--Paley result in the above estimate.

Combining estimates \eqref{middle step}, \eqref{estimate4}, \eqref{estimate6} and \eqref{estimate5}, we have
$$ 
\Big\|\Big(\sum_{j=1}^N\sum\limits_{n \in \mathbb{Z}}|P_{j,n\log N}^{\alpha}f|^2\Big)^{1/2}\Big\|_4^4 \lesssim (\log N)^{4\beta_2 +1}\|f\|_4^4 +\|f\|_4^2\Big\|\Big(\sum_{j=1}^N\sum\limits_{n \in \mathbb{Z}}|P_{j,n\log N}^{\alpha}f|^2\Big)^{1/2}\Big\|_4^2.$$
From this we know that there exists $\beta_1 > 0$ such that 
$$
\Big\|\Big(\sum_{j=1}^N|\Theta_j^{\alpha}f|^2\Big)^{1/2}\Big\|_4 \le (\log N)^{\beta_1}\|f\|_4,
$$
and the proof is finished.
\section*{Acknowledgements} 
The authors are thankful to Antonio C\'ordoba for sending a copy of his manuscript \cite{Cor1981}. Luz Roncal is supported 
by the Spanish Agencia Estatal de Investigaci\'{o}n
through BCAM Severo Ochoa accreditation CEX2021-001142-S/MCIN/AEI/10.13039/501100011033, by 
CNS2023-143893 and by PID2023-146646NB-I00 funded by MICIU/AEI/10.13039/501100011033 and FEDER/EU and by ESF+, and by the Basque Government through the BERC 2022--2025 program. Saurabh Shrivastava is supported by Anusandhan National Research Foundation, India under the project  ANRF/ARG/2025/000940/MS. 
Kalachand Shuin gratefully acknowledges the support of Inspire Faculty Award of Department of Science
and Technology, Govt. of India, Registration no. IFA23-MA191. Linfei Zheng is supported by the National Key R\&D Program of China (Grant No. 2021YFA1002500) and China Scholarship Council.

\bibliography{biblio}
\end{document}